\DeclareMathOperator{\rank}{\mathrm{rank}}
\newtheorem{teo}{Theorem}[section]
\newtheorem{cor}{Corollary}[section]
\newtheorem{defi}{Definition}[section]
\newtheorem{obs}{Remark}[section]
\newtheorem{lema}{Lemma}[section]
\newtheorem{prop}{Proposition}[section]
\newtheorem{ex}{Example}[section]
\newtheorem*{conjecture}{Conjecture}
\newcommand{\isEquivTo}[1]{\underset{#1}{\sim}}
\newcommand{\bnu}{\bm{\nu}}
\newcommand{\cP}{\mathcal{P}}
\newcommand{\cA}{\mathcal{A}}
\newcommand{\cB}{\mathcal{B}}
\newcommand{\cM}{\mathcal{M}}
\newcommand{\cO}{\mathcal{O}}
\newcommand{\cE}{\mathcal{E}}
\newcommand{\cT}{\mathcal{T}}
\newcommand{\cK}{\mathcal{K}}
\newcommand{\cG}{\mathcal{G}}
\newcommand{\cU}{\mathcal{U}}
\newcommand{\K}{\mathcal{K}}
\newcommand{\bx}{\bm{\xi}}
\newcommand{\by}{\bm{y}}
\newcommand{\br}{\bm{x}}
\newcommand{\cal}{\mathcal}
\newcommand{\R}{\mathbb{R}}
\newcommand{\NN}{\mathbb{N}}
\title{
	Singularities of 3-parameter line congruences in  $\R^4$ }
\author{$\text{D. Lopes}^\dagger,\; \text{M.A.S. Ruas}^\ddagger\; \text{and}\; \text{I.C. Santos}^*$}
\thanks{\\ ${^\dagger} \text{Supported by INCTMat/CAPES Proc. 88887.510549/2020-00}\\
{^\ddagger}\text{Partially supported by FAPESP Proc. 2019/21181-0 and CNPq
	Proc. 305695/2019-3}\\ 
{^*}\text{Supported by CAPES Proc. PROEX-11365975/D}$}
\begin{document}
\begin{abstract}
	In this paper, we give the generic classification of the singularities of 3-parameter line congruences in $\R^4$. We also classify the generic singularities of Blaschke (affine) normal congruences.\\
	\textbf{Keywords}: Line congruences, Blaschke normal congruences, Lagrangian singularities.
\end{abstract}

\maketitle
\section{Introduction}
In \cite{Monge} Monge was one of the first authors to discuss line congruences in $\R^3$. In recent decades, some papers are dedicated to the study of line congruences from  singularity theory and  differential affine geometry viewpoints (\cite{Barajas}, \cite{Craizer}, \cite{Giblin}, \cite{Izumiya}). There is a particular interest in the Blaschke normal congruences, in the behavior of affine principal lines near an affine umbilic point (\cite{Barajas}) and the behavior of affine curvature lines at isolated umbilic points (\cite{Craizer}). From singularity theory viewpoint, there is a particular interest in the classification of the singularities related to line congruences (\cite{Izumiya}).

A 3-parameter line congruence in $\R^4$ is nothing but a 3-parameter family of lines over a hypersurface in $\R^4$. Locally, we denote a line congruence by $\cal{C}$ = $\lbrace \br(u), \bx(u) \rbrace$, where $\br$ is a parametrization of the reference hypersurface $S$ and $\bx$  is a parametrization of a director hypersurface. A classical example appears when we consider the congruence generated by the normal lines to a regular hypersurface $S$ in $\R^4$, which is called an exact normal congruence. Here, we look at a line congruence $\cal{C}$ = $\lbrace \br(u), \bx(u) \rbrace$ as a smooth map $F_{(\br, \bx)}: U \times I \rightarrow \R^4$, given by $F_{(\br, \bx)}(u,t) = \br(u) + t\bx(u)$, where $I$ is an open interval and $U \subset \R^3$ is an open subset. 

Taking into account \cite{Izumiya}, we seek to provide a classification of the generic singularities of 3-parameter line congruences, 3-parameter normal congruences and Blaschke normal congruences in $\R^4$. As we want to use methods of singularity theory to classify congruences, in  section 2 we review some results that are useful for the next sections. In section 3, we give some basic definitions and results on 3-parameter line congruences.  In Sections 4 and 5 we use the same approach as in \cite{Izumiya} to classify generically the singularities of 3-parameter line congruences and 3-parameter normal congruences 
 in theorems (\ref{teo4.1}) and (\ref{teo5.2}),  respectively. The comparison of these two theorems shows that the generic singularities of 3-parameter line congruences are different from the generic singularities of 3-parameter normal congruences. Furthermore, we show that generically we also have singularities of corank $2$ in both cases and the proof of theorem (\ref{teo4.1}) relies on a refinement of $\cK$-orbits by $\cA$-orbits of $\cA_{e}$-codimension $1$.

In section 6, we look at the Blaschke (affine) normal congruences, i.e. congruences related to the Blaschke vector field of a non-degenerate hypersurface in $\R^4$, which is a classical equiaffine transversal vector field. Based on the theory of Lagrangian singularities, we define the family of support functions associated to the Blaschke congruence and prove that this is a Morse family of functions. We then classify the generic singularities of the Blaschke exact normal congruences and Blaschke normal congruences, providing a positive answer to the following conjecture presented in \cite{Izumiya}:
\begin{conjecture}
	Germs of generic Blaschke affine normal congruences at any
point are Lagrangian stable.
\end{conjecture}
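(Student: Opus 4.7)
The plan is to deduce Lagrangian stability from the Morse family property of the affine support function family established in Section~6. Denote by $H\colon U\times\R^4\to\R$ the family of Blaschke affine support functions and by $\Sigma(H)\subset U\times\R^4$ its fibrewise critical set. Since $H$ is Morse, $\Sigma(H)$ is a smooth $4$-manifold and the assignment $(u,q)\mapsto(q,\partial H/\partial q)$ defines a Lagrangian immersion into $T^*\R^4$ whose Lagrangian projection onto $\R^4$ has, as critical locus and caustic, precisely the singular set and singular values of the Blaschke congruence $F_{(\br,\bx)}$. By the Arnold--Zakalyukin correspondence, Lagrangian stability of the congruence at $(u_0,q_0)$ is therefore equivalent to $\mathcal{R}^+$-versality of $H$ as a $4$-parameter unfolding of the function $h_{q_0}(u)=H(u,q_0)$ at $u_0$. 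So the whole conjecture is reduced to: for a generic non-degenerate hypersurface $\br$, the unfolding $H$ is $\mathcal{R}^+$-versal at every point of $\Sigma(H)$.

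To establish generic versality I would run a jet-transversality argument in the Whitney topology on the space of non-degenerate hypersurface germs $\br\colon U\to\R^4$. The Blaschke normal $\bx$ is determined by a finite jet of $\br$, hence so is $H$, and the failure locus of $\mathcal{R}^+$-versality at an $\mathcal{R}$-orbit $\mathcal{O}$ can be described as a closed semialgebraic subset $W_{\mathcal{O}}$ of a suitable jet space, cut out by the condition that the initial speeds $\{\partial H/\partial q_i|_{q=q_0}\}_{i=1}^{4}$ fail to span the truncated local algebra of $h_{q_0}$ modulo constants. Because the base of the Lagrangian projection has dimension three, only singularities of $\mathcal{R}^+$-codimension at most $3$ appear stably, namely $A_k$ for $k\le 4$ and $D_4^{\pm}$. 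For each such type I would show that $W_{\mathcal{O}}$ has codimension strictly greater than the dimension of $\Sigma(H)$, so that Thom's transversality theorem forces a residual set of hypersurfaces to miss $W_{\mathcal{O}}$ along $\Sigma(H)$. Combining these avoidances gives an open and dense set of Blaschke congruences that are $\mathcal{R}^+$-versal everywhere, and hence Lagrangian stable.

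The main obstacle is verifying that the unfolding produced by the affine support function is genuinely rich enough to be versal, i.e.\ that the Blaschke construction does not impose hidden linear dependencies among the four initial speeds $\partial H/\partial q_i$. This is not automatic, since $H$ comes from a restricted class of unfoldings and not from an arbitrary $4$-parameter family. I expect to resolve it by an explicit computation using the affine fundamental form and the Blaschke metric: one writes $H(u,q)=\langle q-\br(u),\bx(u)\rangle$ (up to affine-geometric normalization), differentiates in $q$, and checks by direct linear algebra that the resulting covectors span the Jacobian algebra of $h_{q_0}$ modulo constants up to order $5$, which covers all singularities in the target list. Once this openness statement is in hand, combining it with the transversality estimates above delivers the desired residual set, establishing the conjecture and simultaneously producing the classification list $A_2,A_3,A_4,D_4^{\pm}$ for the generic singularities of the Blaschke normal congruence.
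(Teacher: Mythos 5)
Your reduction of the conjecture to $\mathcal{R}^{+}$-versality of the affine support function family is exactly the paper's strategy, but two concrete problems remain. First, a dimension slip: the base of the Lagrangian projection is $\R^4$, not $\R^3$ (the congruence is a map $U\times I\to\R^4$ with $U\subset\R^3$, so the generating family is a $4$-parameter family of functions of three variables), hence the stable singularities are those of $\mathcal{R}^{+}$-codimension at most $4$, namely $A_k$ for $k\le 5$ together with $D_4^{\pm}$ and $D_5$. Your final list $A_2,A_3,A_4,D_4^{\pm}$ omits the butterfly and the parabolic umbilic, both of which occur generically, appear in the paper's Table 1, and are realized explicitly in the paper's closing examples.

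Second, the step you yourself flag as ``the main obstacle'' --- that the four initial speeds $\partial\rho/\partial p_i$, which are just the components of the conormal vector $\bnu=|K|^{-1/4}N$, generically span the relevant local algebras --- cannot be settled by a single ``direct linear algebra'' computation on a fixed hypersurface: it genuinely fails for special hypersurfaces, and because $\bnu$ is determined by a low-order jet of $\br$, a naive jet-transversality argument in the space of hypersurfaces does not obviously produce independent perturbations of this restricted class of unfoldings. The paper closes precisely this gap by writing $\rho=H\circ\left(g,\mathrm{Id}\right)$ with $H(A,B,C)=\langle B,C-A\rangle$ a fixed locally versal family and $g=(\br,\bnu)$ an immersion (a section of the conormal bundle), identifying $Emb_{ng}(U,\R^4)$ with the space of such conormal sections, and invoking Montaldi's theorem on generic composites to obtain local $\mathcal{P}$-$\mathcal{R}^{+}$-versality for a residual set. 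You would need either this device or a fully worked-out substitute. Two further pieces of the paper's argument are missing from your sketch: the proof that $\rho$ is a Morse family at degenerate critical points (a nontrivial case analysis on the rank of the Hessian using the equiaffine condition and the shape operator), which you take for granted; and the passage from Blaschke \emph{exact} normal congruences to general Blaschke normal congruences, carried out in the paper via the open continuous reparametrization map $T_{rp}$.
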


\section{Fixing notations, definitions and some basic results}
We denote by $I \subset \R$ an open interval and $U$ an open subset of $\R^3$, where $t \in I$ and $u = (u_{1}, u_{2}, u_{3}) \in U$. Here, $\br: U \rightarrow \R^4$ is not necessarily an immersion, i.e. it may have singularities. Given any smooth map $f: U \rightarrow \R$, we denote by $f_{u_{i}}$ the derivative of $f$ with respect to $u_{i}$, $i=1,2,3$.

We now present some basic results in singularity theory which help us in the next sections. More details can be found in \cite{gib}, \cite{wall} and \cite{juanjo}. Given map germs $f,g: (\R^n, \textbf{0}) \rightarrow (\R^p, \textbf{0})$, if there is a germ of a diffeomorphism $h: (\R^n, \bm{0}) \rightarrow (\R^n, \bm{0})$, such that $h^{*}(f^{*}(\cM_{p})) = g^{*}(\cM_{p})$, where $h^{*}(f^{*}(\cM_{p}))$ is the ideal generated by the coordinate functions of $f \circ h$ and  $ g^{*}(\cM_{p})$ is the ideal generated by the coordinate functions of $g$, we say that  $f$ and $g$ are $\cK$-equivalent, denoted by, $f \isEquivTo{\cK} g $. Let $J^{k}(n,p)$ be the $k$-jet space of map germs from $\R^{n}$ to $\R^{p}$. For any $ j^{k}f(0)$, we set
\begin{align*}
\cK^{k}(j^{k}f(0)) = \lbrace j^{k}g(0): f \isEquivTo{\cK} g  \rbrace,
\end{align*}
for the $\cK$-orbit of $f$ in the space of $k$-jets $J^{k}(n,p)$. For a map germ $f: (\R^n \times \R^r, \textbf{0}) \rightarrow (\R^{p}, \textbf{0})$ we define
\begin{align*}
j^{k}_{1}: (\R^n \times \R^r, \textbf{0}) &\rightarrow J^{k}(n,p)\\
(x,u) &\mapsto j^{k}_{1}(x,u) = j^{k}f_{u}(x),
\end{align*}
where $f_{u}(x) = f(x,u)$.

The next definition of unfolding is locally equivalent to the usual parametrized one (see \cite{gibsonand}, chapter 3).
\begin{defi}\normalfont
Let $f: (N, x_{0}) \rightarrow  (P, y_{o})$ be a map germ between manifolds. An \textit{unfolding} of $f$ is a triple $(F,i,j)$ of map germs, where $i: (N, x_0) \rightarrow (N', x'_{0})$, $j:(P, y_0) \rightarrow (P', y'_{0}) $ are immersions and $j$ is transverse to $F$, such that $F \circ i = j \circ f$ and $(i,f): N \rightarrow \lbrace (x', y)\in N' \times P: F(x') = j(y) \rbrace $ is a diffeomorphism germ. The dimension of the unfolding is $dim(N') - dim(N)$.
\begin{figure}[h]
\begin{center}
		\includegraphics[scale=0.2]{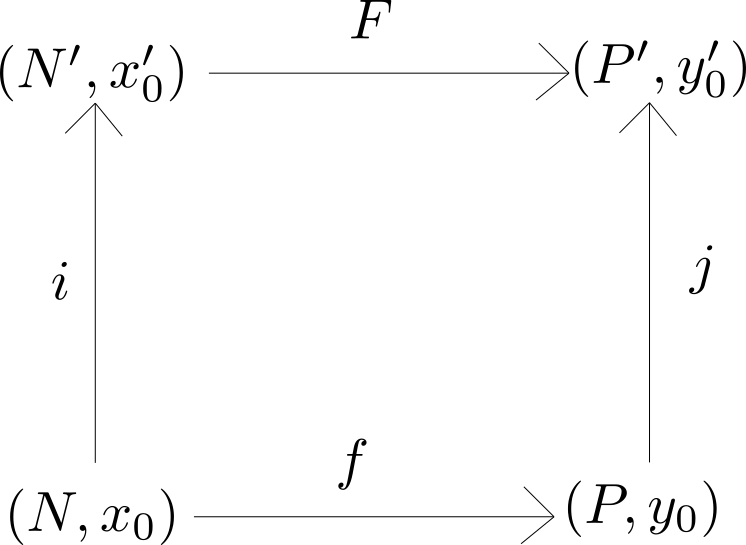}
		\caption{Associated diagram}
\end{center}
\end{figure}
\end{defi}
\begin{lema}{\rm{(\cite{Izumiya}, Lemma 3.1)}}\label{lema3.1}
Let $F: (\R^{n-1} \times \R, (\bm{0}, 0)) \rightarrow (\R^n, \bm{0})$ be a map germ with components $F_{i}(x,t)$, $i=1,2,\cdots, n$, i.e.
\begin{align*}
F(x,t) = (F_{1}(x,t), \cdots, F_{n}(x,t)).
\end{align*}
Suppose that $\dfrac{\partial F_{n}}{\partial t}(0,0) \neq 0$. We know by the Implicit Function Theorem that, there is a germ of function $g: (\R^{n-1}, \bm{0}) \rightarrow (\R, 0)$, such that
\begin{align*}
F_{n}^{-1}(0) = \lbrace (x, g(x)): x \in (\R^{n-1}, \bm{0})  \rbrace.
\end{align*}
Let us consider the immersion germs $i: (\R^{n-1}, \bm{0}) \rightarrow (\R^n, (\bm{0},0))$, given by $i(x) = (x,g(x))$, $j: (\R^{n-1}, \bm{0}) \rightarrow (\R^n, (\bm{0}, 0))$, given by $j(y) = (y,0)$ and a map germ $f: (\R^{n-1}, \bm{0}) \rightarrow (\R^{n-1}, \bm{0})$, given by $f(x) = (F_{1}(x,g(x)), \cdots, F_{n-1}(x,g(x)))$. Then the triple $(F,i,j)$ is a one-dimensional unfolding of $f$.
\end{lema}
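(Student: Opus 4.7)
The plan is to verify, one at a time, the four conditions defining an unfolding: (a) that $i$ and $j$ are immersion germs, (b) that $j$ is transverse to $F$, (c) the commutativity $F\circ i = j\circ f$, and (d) that the map $(i,f)$ is a diffeomorphism germ onto the fibered product $\{(x',y)\in N'\times P : F(x')=j(y)\}$; the dimension count $\dim(N')-\dim(N)=n-(n-1)=1$ will then be immediate.

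First I would dispatch (a) and (c) directly. The map $i(x)=(x,g(x))$ is the graph of a smooth function, hence an immersion, and $j(y)=(y,0)$ is a linear inclusion, so also an immersion. For commutativity, by the very definition of $g$ from the Implicit Function Theorem we have $F_n(x,g(x))\equiv 0$, so
\begin{align*}
F(i(x)) = \bigl(F_1(x,g(x)),\ldots,F_{n-1}(x,g(x)),\,0\bigr) = \bigl(f(x),0\bigr) = j(f(x)).
\end{align*}

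Next I would handle (b). The image of $dj_{\bm{0}}$ is the hyperplane $\R^{n-1}\times\{0\}\subset \R^n$, while $dF_{(\bm{0},0)}$ sends $\partial/\partial t$ to the vector $(\partial F_1/\partial t,\ldots,\partial F_n/\partial t)(\bm{0},0)$, whose last coordinate $\partial F_n/\partial t(\bm{0},0)$ is nonzero by hypothesis. Thus $\mathrm{Im}\,dj + \mathrm{Im}\,dF = \R^n$, which is exactly transversality.

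The only step with any substance is (d). Here I would describe the fibered product explicitly: the equation $F(x,t)=j(y)=(y,0)$ amounts to $F_k(x,t)=y_k$ for $k<n$ together with $F_n(x,t)=0$, and the hypothesis on $\partial F_n/\partial t$ lets us solve the last equation as $t=g(x)$ near the origin. Substituting, the remaining equations force $y_k = F_k(x,g(x))$, i.e.\ $y=f(x)$. So the fibered product is parametrized smoothly by $x\mapsto((x,g(x)),f(x))$, which is precisely $(i,f)$; its inverse is projection to the $x$-factor. The main (mild) obstacle is being careful that this parametrization is a germ of diffeomorphism onto the fibered product viewed as a smooth submanifold of $N'\times P$; transversality from step (b) guarantees the fibered product is indeed a manifold of the correct dimension, and then the explicit inverse finishes the argument.
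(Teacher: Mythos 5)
Your verification is correct and complete: each of the four defining conditions of an unfolding is checked soundly, and the key point --- that the fibered product $\{(x',y):F(x')=j(y)\}$ reduces via $F_n(x,t)=0\Leftrightarrow t=g(x)$ to the graph of $x\mapsto((x,g(x)),f(x))$, with the projection to $x$ as explicit inverse --- is exactly the substance of the statement. The paper itself gives no proof, simply citing Lemma 3.1 of Izumiya--Saji--Takeuchi, and the argument there is this same direct verification, so there is nothing to flag.
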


\begin{lema}{\rm{(\cite{Izumiyaruled}, Lemma 3.3})}\label{lema3.2}
	Let $F : (\R^n \times \R^r, \bm{0}) \rightarrow (\R^p \times \R^r, \bm{0})$ be an unfolding of $f_{0}$ of the form
	$F (x, u) = (f(x, u), u)$. If $j^{k}_{1}f$ is transverse to $\cK^{k}(j^{k}f_{0}(0))$ for a sufficiently large $k$, then $F$ is infinitesimally $\cA$-stable.
\end{lema}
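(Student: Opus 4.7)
The plan is to verify directly the defining identity of infinitesimal $\cA$-stability for $F$, namely
\[
\theta(F) \;=\; tF(\theta_{N}) + \omega F(\theta_{P}),
\]
where $N=\R^{n+r}$, $P=\R^{p+r}$, $tF(\eta)=dF\circ\eta$ and $\omega F(\zeta)=\zeta\circ F$. First I would exploit the product form $F(x,u)=(f(x,u),u)$: the last $r$ coordinates of any element of $\theta(F)$ can be absorbed directly through the identity block of $dF$, so proving stability reduces to the $\cE_{n+r}$-module identity
\[
\theta(f) \;=\; tf\bigl(\cE_{n+r}\bigl\langle\tfrac{\partial}{\partial x_{i}}\bigr\rangle\bigr) + \omega f\bigl(\cE_{p+r}^{\,p}\bigr) + \sum_{j=1}^{r}\cE_{n+r}\,\frac{\partial f}{\partial u_{j}}.
\]

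Next I would translate the $\cK$-transversality hypothesis into algebra. The tangent space to $\cK^{k}(j^{k}f_{0}(\bm{0}))$ at $j^{k}f_{0}(\bm{0})$ is precisely the $k$-jet image of
\[
T\cK\cdot f_{0} \;=\; tf_{0}(\theta_{n}) + f_{0}^{*}(\cM_{p})\cdot\theta(f_{0}),
\]
so the transversality $j^{k}_{1}f \pitchfork \cK^{k}(j^{k}f_{0}(\bm{0}))$ at the origin says exactly that the $k$-jets of the germs $\partial f/\partial u_{j}|_{u=\bm{0}}$ project onto a complement of $j^{k}(T\cK\cdot f_{0})$ inside $J^{k}\theta(f_{0})$. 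Reading this off, one obtains the congruence
\[
\theta(f_{0}) \;\subseteq\; tf_{0}(\theta_{n}) + f_{0}^{*}(\cM_{p})\cdot\theta(f_{0}) + \R\Big\{\tfrac{\partial f}{\partial u_{j}}\big|_{u=\bm{0}}\Big\} + \cM_{n}^{k+1}\theta(f_{0}).
\]

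Then I would apply the Malgrange preparation theorem. The transversality condition forces $f_{0}$ to be $k$-$\cK$-determined for $k$ sufficiently large, so $f_{0}^{*}(\cE_{p})$ becomes a finitely generated $\cE_{n}$-module via $f_{0}^{*}$; preparation then upgrades the displayed jet-level congruence to the full $\cE_{n+r}$-module identity required for $f$, and the reduction from the first step delivers the infinitesimal $\cA$-stability of $F$.

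The main technical obstacle is precisely this passage from finite-order information to germs via the preparation theorem: one must choose $k$ large enough that both the $\cK$-finite determinacy of $f_{0}$ and the module-finiteness of $f^{*}(\cE_{p+r})$ over $\cE_{n+r}$ can be invoked, and one must verify that the candidate generators obtained from the unfolding directions really lift from jets to germs. Once these analytic points are secured, the remaining bookkeeping with the trivial $u$-components of $F$ is automatic.
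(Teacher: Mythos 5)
The paper does not prove this lemma at all: it is imported verbatim from Izumiya--Takeuchi (\cite{Izumiyaruled}, Lemma~3.3), and is ultimately an instance of Mather's theorem that infinitesimal $\cA$-stability of an unfolding $F(x,u)=(f(x,u),u)$ is equivalent to transversality of the jet extension $j^{k}_{1}f$ to the $\cK^{k}$-orbit of $j^{k}f_{0}(0)$. Your outline reconstructs exactly that classical argument, and the strategy is correct: reduce infinitesimal stability of $F$ to the module identity $\theta(f_{0})=tf_{0}(\theta_{n})+\omega f_{0}(\theta_{p})+\R\{\partial f/\partial u_{j}|_{u=\bm{0}}\}$, translate the transversality hypothesis into a congruence modulo $\cM_{n}^{k+1}\theta(f_{0})$, and close the gap with Malgrange preparation. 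So there is nothing to compare against in the paper; the question is only whether your sketch would survive being written out.

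Two points need care. First, the tangent space to $\cK^{k}(j^{k}f_{0}(0))$ is the $k$-jet of $tf_{0}(\cM_{n}\theta_{n})+f_{0}^{*}(\cM_{p})\,\theta(f_{0})$, not of $tf_{0}(\theta_{n})+f_{0}^{*}(\cM_{p})\,\theta(f_{0})$: the group $\cK$ fixes the origin, so the directions $\partial f_{0}/\partial x_{i}$ and the constant sections of the target are \emph{not} tangent to the orbit but are supplied, respectively, by the differential of $j^{k}_{1}f$ in the $x$-directions and by $\omega f_{0}(\theta_{p})$; your congruence silently conflates these contributions, and the constants are in fact missing from it as written. Second, the passage from the jet-level congruence to the germ-level identity is the actual content of the lemma: one must first observe that transversality forces the $\cK$-orbit to have codimension at most $n+r$ in the jet space, hence $f_{0}$ is finitely $\cK$-determined and $\theta(f_{0})/\bigl(tf_{0}(\theta_{n})+f_{0}^{*}(\cM_{p})\,\theta(f_{0})\bigr)$ is a finitely generated module over $\cE_{p}$ via $f_{0}^{*}$, and only then do preparation and Nakayama upgrade the finite-order statement. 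You correctly name these steps but do not execute them, so the proposal is a sound skeleton of the standard proof rather than a complete argument; for a lemma the paper itself only cites, that is an accurate reflection of where the real work lives.
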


\begin{defi}\normalfont
	We say that a $r$-parameter family of germs of functions $F : (\R^n \times \R^r, \bm{0}) \rightarrow (\R, 0)$ is a \textit{Morse family of functions} if the map germ $\Delta_{F}:(\R^n \times \R^r, \bm{0}) \rightarrow (\R^n, \bm{0}) $, given by
	\begin{align*}
	\Delta_{F}(x,u) = \left( \frac{\partial F}{\partial x_{1}}, \cdots, \frac{\partial F}{\partial x_{n}}\right)(x,u)
	\end{align*}
	 is not singular.
\end{defi}

\begin{defi}\normalfont
	Let $\cG$ be one of Mather's subgroups of $\cK$ and $\cB$ a smooth manifold. A family of maps $F: \R^n \times \cB \rightarrow \R^k$, given by $F(x,u) = f_{u}(x) $, is said to be \textit{locally $\cG$-versal}  if for every $(x,u) \in \R^n \times \cB$, the germ of $F$ at $(x,u)$ is a $\cG$-versal unfolding of $f_{u}$ at $x$.
\end{defi}
Let $g: M \rightarrow \R^{n}$ be an immersion, where $M$ is a smooth manifold, and denote by $\phi_{g}: M \times \cB \rightarrow \R^k$ the map given by
\begin{align*}
\phi_{g}(y,u) = F(g(y), u).
\end{align*}
\begin{teo}{\rm{(\cite{Montaldi}, Theorem 1)}}\label{teoMontaldiadap}
Suppose $F: \R^n \times \cB \rightarrow \R^k$ as above is locally $\cG$-versal. Let $W \subset J^{r}(M, \R^k)$ be a $\cG$-invariant submanifold, where $M$ is a manifold and let
\begin{align*}
R_{W} = \lbrace g \in Imm(M, \R^n): j^{r}_{1}\phi_{g} \pitchfork  W \rbrace.
\end{align*}
Then $R_{W}$ is residual in $Imm(M, \R^n)$. Moreover, if $\cB$ is compact and $W$ is closed, then $R_{W}$ is open and dense.
\end{teo}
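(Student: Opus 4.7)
The natural approach is a parametric Thom transversality argument. Consider the evaluation map
\begin{align*}
\Phi: \mathrm{Imm}(M, \R^n) \times M \times \cB &\rightarrow J^{r}(M, \R^k)\\
(g, y, u) &\mapsto j^{r}_{1}\phi_{g}(y, u),
\end{align*}
where $\mathrm{Imm}(M, \R^n)$ carries the Whitney $C^\infty$ topology. The plan is to show that $\Phi$ is transverse to $W$ (in fact, to show the stronger statement that $\Phi$ is a submersion at every point of $\Phi^{-1}(W)$) and then invoke Abraham's parametric transversality theorem: the set of $g$ for which the slice $j^{r}_{1}\phi_{g} = \Phi(g, \cdot, \cdot)$ is transverse to $W$ is then residual. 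This directly yields the residual conclusion for $R_W$.

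The core step is the verification of the transversality of $\Phi$ at an arbitrary point $(g_{0}, y_{0}, u_{0})$ with $z_{0} := \Phi(g_{0}, y_{0}, u_{0}) \in W$. Set $x_{0} = g_{0}(y_{0})$. Two families of infinitesimal perturbations enter: (i) fixing $g_{0}$ and $y_{0}$ and varying $u$ moves $z_{0}$ in the jet directions produced by $\partial F/\partial u_{1}, \dots, \partial F/\partial u_{r}$ together with their $x$-derivatives evaluated at $g_{0}(y_{0})$; by the hypothesis that $F$ is locally $\cG$-versal at $(x_{0}, u_{0})$ these span a complement to $T_{z_{0}}(\cG \cdot z_{0})$ in $T_{z_{0}} J^{r}(M, \R^{k})$; (ii) varying $g_{0}$ by $s \mapsto g_{0} + s h$ with $h$ ranging over smooth sections of $g_{0}^{*}T\R^{n}$ reaches, via the chain rule applied to $\phi_{g}(y, u) = F(g(y), u)$, the directions tangent to the $\cG$-orbit through $z_{0}$ (essentially contributing the right-coordinate-change part of $\cG$ together with vertical variations). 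Since $W$ is $\cG$-invariant, $T_{z_{0}}(\cG \cdot z_{0}) \subset T_{z_{0}} W$, so (i) and (ii) combined give
\begin{align*}
\mathrm{Im}\, d\Phi_{(g_{0}, y_{0}, u_{0})} + T_{z_{0}} W = T_{z_{0}} J^{r}(M, \R^{k}),
\end{align*}
which is exactly the transversality of $\Phi$ to $W$ at that point.

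Once $\Phi \pitchfork W$ is established, Abraham's theorem supplies residuality of $R_W$. For the compact, closed case, one argues that failure of transversality of $j^{r}_{1}\phi_{g}$ to $W$ is a closed condition on $g$: if $\cB$ is compact and $W$ closed, the locus of potential failure is a compact subset of $M \times \cB$, and a small perturbation in $g$ that restores transversality at one point works on a neighborhood by the openness of transversality; a finite cover argument then shows the complement of $R_W$ is closed, so $R_W$ is open and dense.

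The hard part will be the rigorous identification in step (ii) of which jet directions are produced by varying the immersion $g$, and the bookkeeping showing that together with the versality directions of (i) they span the full tangent space modulo $T_{z_{0}} W$. This is Montaldi's key computation in \cite{Montaldi}, and it is precisely where the hypotheses of $\cG$-versality of $F$ and $\cG$-invariance of $W$ combine; everything else is a packaging of standard transversality machinery.
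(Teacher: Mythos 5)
The paper does not actually prove this statement: it is quoted, in adapted notation, from Montaldi's article, and the citation ``(\cite{Montaldi}, Theorem 1)'' is the entire justification given. So the comparison has to be with Montaldi's original argument, and your sketch does follow the same strategy -- an evaluation map $\Phi$ on $Imm(M,\R^n)\times M\times \cB$, a pointwise verification that $\mathrm{Im}\, d\Phi_{(g_0,y_0,u_0)} + T_{z_0}W = T_{z_0}J^{r}(M,\R^k)$ using local $\cG$-versality of $F$ together with $\cG$-invariance of $W$, then parametric (Abraham/Thom) transversality for residuality, and the standard openness argument for closed $W$ and compact $\cB$ in the Whitney topology.

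Two caveats. First, the division of labour between your steps (i) and (ii) is not right as literally stated. Local $\cG$-versality of $F$ at $(x_0,u_0)$ is a statement about the unfolding of the germ $f_{u_0}\colon(\R^n,x_0)\rightarrow\R^k$: the initial speeds $\dot F_i=\partial F/\partial u_i$ span a complement of $T\cG_e f_{u_0}$ in $\theta(f_{u_0})$, i.e.\ a statement upstairs in $J^{r}(n,k)$, not in $J^{r}(M,\R^k)$. After composing with $g$, varying $u$ only contributes the restrictions $\dot F_i\circ g$, which in general do \emph{not} span a complement of $T_{z_0}(\cG\cdot z_0)$ for $z_0=j^{r}(f_{u_0}\circ g)(y_0)$. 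Correspondingly, the perturbations of $g$ must do more than ``reach the orbit directions'': varying $g$ by an arbitrary vector field $h$ along $g$, including directions normal to $g(M)$, produces $d_xF(g(y),u)\cdot h(y)$, and it is exactly these normal-derivative terms that transport the upstairs versality identity down to the composite (the contact part of $T\cG_e$ then lands inside $T_{z_0}W$ by $\cG$-invariance). Second, you explicitly defer this computation to Montaldi, so what you have is a correct outline rather than a proof; given that the paper itself only cites the result this is defensible, but the step you label ``the hard part'' is the entire mathematical content of the theorem, and your write-up should either carry it out or present the statement, as the paper does, as an imported result.
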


\section{Line Congruences}
In this section, we define 3-parameter line congruences and discuss some of their properties.

\begin{defi}\normalfont
	A \textit{3-parameter line congruence in $\R^4$} is a 3-parameter family of lines in $\R^4$. Locally, we write $\cal{C}$ = $\lbrace \br(u), \bx(u) \rbrace$ and the line congruence is given by a smooth map
	\begin{align*}
	F_{(\br, \bx)}: U \times I &\rightarrow \R^4\\
	(u,t) &\mapsto F(u,t) = \br(u) + t\bx(u), 
	\end{align*}
	where 
	\begin{itemize}
		\item $\br: U \rightarrow \R^4$ is smooth and it is called a \textit{reference hypersurface of the congruence};
		\item $\bx: U \rightarrow \R^4 \setminus \lbrace \textbf{0} \rbrace$ is smooth and it is called the \textit{director hypersurface of the congruence}.
	\end{itemize}
\end{defi}
When there is no risk of confusion, we denote the line congruence just by $F$ instead of $F_{(\br, \bx)}$.

\begin{lema}
	The singular points of a line congruence $F_{(\br, \bx)}$ are the points $(u,t)$ such that
	\begin{align*}
	& t^3 \langle \bx, \bx_{u_1}\wedge \bx_{u_2} \wedge \bx_{u_3}  \rangle + t^2 \langle \bx, \br_{u_1}\wedge \bx_{u_2} \wedge \bx_{u_3} +  \bx_{u_1}\wedge \br_{u_2} \wedge \bx_{u_3} + \bx_{u_1}\wedge \bx_{u_2} \wedge \br_{u_3} \rangle + \\
	&+ t \langle \bx, \br_{u_1}\wedge \br_{u_2} \wedge \bx_{u_3} +  \br_{u_1}\wedge \bx_{u_2} \wedge \br_{u_3} + \bx_{u_1}\wedge \br_{u_2} \wedge \br_{u_3} \rangle + \langle \bx, \br_{u_1}\wedge \br_{u_2} \wedge \br_{u_3}  \rangle = 0.
	\end{align*}
\end{lema}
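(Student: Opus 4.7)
A point $(u,t)\in U\times I$ is singular for $F=F_{(\br,\bx)}$ if and only if the Jacobian matrix of $F$ at $(u,t)$ fails to have maximal rank, i.e.\
\[
\det\bigl[\,F_{u_1}(u,t)\mid F_{u_2}(u,t)\mid F_{u_3}(u,t)\mid F_{t}(u,t)\,\bigr]=0.
\]
The plan is to compute these four column vectors, to recognize the determinant as a single inner product in $\R^4$ using the $4$-dimensional analog of the cross product, and then to expand multilinearly in $t$ and collect coefficients.

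\textbf{Step 1: Differentiating $F$.} From $F(u,t)=\br(u)+t\bx(u)$ one reads off $F_{u_i}=\br_{u_i}+t\,\bx_{u_i}$ for $i=1,2,3$ and $F_{t}=\bx$.

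\textbf{Step 2: Rewriting the determinant.} Recall that in $\R^4$ the wedge product $v_1\wedge v_2\wedge v_3$ of three vectors is the unique vector characterized by $\langle w,\,v_1\wedge v_2\wedge v_3\rangle=\det[v_1\mid v_2\mid v_3\mid w]$ for every $w\in\R^4$. Applying this identity with $v_i=F_{u_i}$ and $w=F_t=\bx$ gives
\[
\det\!\bigl[F_{u_1}\mid F_{u_2}\mid F_{u_3}\mid \bx\bigr]=\bigl\langle\bx,\;(\br_{u_1}+t\bx_{u_1})\wedge(\br_{u_2}+t\bx_{u_2})\wedge(\br_{u_3}+t\bx_{u_3})\bigr\rangle.
\]

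\textbf{Step 3: Multilinear expansion.} Expanding the triple wedge multilinearly in $t$, the coefficient of $t^0$ is $\br_{u_1}\wedge\br_{u_2}\wedge\br_{u_3}$; the coefficient of $t^1$ is $\bx_{u_1}\wedge\br_{u_2}\wedge\br_{u_3}+\br_{u_1}\wedge\bx_{u_2}\wedge\br_{u_3}+\br_{u_1}\wedge\br_{u_2}\wedge\bx_{u_3}$; the coefficient of $t^2$ is the analogous sum with two $\bx$'s and one $\br$; the coefficient of $t^3$ is $\bx_{u_1}\wedge\bx_{u_2}\wedge\bx_{u_3}$. Pairing each coefficient with $\bx$ under $\langle\cdot,\cdot\rangle$ yields exactly the polynomial equation in the statement, and setting it equal to zero characterizes singular points.

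\textbf{Main obstacle.} The proof is essentially a careful bookkeeping exercise; there is no real technical obstacle beyond the notational convention for the $4$-dimensional wedge. The only subtle point to flag is the sign/ordering convention implicit in $\langle\bx,\,v_1\wedge v_2\wedge v_3\rangle=\det[v_1\mid v_2\mid v_3\mid\bx]$, since any other placement of $\bx$ among the columns would produce the same equation up to an overall sign and hence the same zero set.
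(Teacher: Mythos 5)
Your proposal is correct and follows exactly the same route as the paper: compute the Jacobian of $F$, identify singular points with the vanishing of $\det JF = \langle \bx, (\br_{u_1}+t\bx_{u_1})\wedge(\br_{u_2}+t\bx_{u_2})\wedge(\br_{u_3}+t\bx_{u_3})\rangle$, and expand multilinearly in $t$. The only difference is that you spell out the multilinear expansion and the wedge/determinant convention, which the paper leaves implicit.
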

\begin{proof}
	The jacobian matrix of $F$ is
	\begin{align*}
	JF = \left[ \br_{u_1} + t\bx_{u_1} \ \ \br_{u_2} + t\bx_{u_2} \ \ \br_{u_3} + t\bx_{u_3} \ \ \bx \right].
	\end{align*}	
	As we know, $(u,t)$ is a singular point of $F$ if, and only if, $\det JF(u,t) = 0$, thus the result follows from
	\begin{align*}
	\det JF(u,t) = \langle \bx, (\br_{u_1} + t\bx_{u_1}) \wedge (\br_{u_2} + t\bx_{u_2}) \wedge (\br_{u_3} + t\bx_{u_3}) \rangle = 0.
	\end{align*}
\end{proof}

\begin{defi}\label{def3.2}\normalfont
	We say that $\by(u) = \br(u) + t(u)\bx(u)$ is a \textit{focal hypersurface} of the line congruence $F_{(\br, \bx)}$ if
	\begin{align}
	\langle \bx(u), \by_{u_{1}} \wedge  \by_{u_{2}} \wedge\by_{u_{3}} \rangle = 0.
	\end{align}
\end{defi}

	If $\by(u) = \br(u) + t(u)\bx(u)$ is a focal hypersurface of the line congruence $F_{(\br, \bx)}$ then
	\begin{align*}
	& t^3 \langle \bx, \bx_{u_1}\wedge \bx_{u_2} \wedge \bx_{u_3}  \rangle + t^2 \langle \bx, \br_{u_1}\wedge \bx_{u_2} \wedge \bx_{u_3} +  \bx_{u_1}\wedge \br_{u_2} \wedge \bx_{u_3} + \bx_{u_1}\wedge \bx_{u_2} \wedge \br_{u_3} \rangle + \\
	&+ t \langle \bx, \br_{u_1}\wedge \br_{u_2} \wedge \bx_{u_3} +  \br_{u_1}\wedge \bx_{u_2} \wedge \br_{u_3} + \bx_{u_1}\wedge \br_{u_2} \wedge \br_{u_3} \rangle + \langle \bx, \br_{u_1}\wedge \br_{u_2} \wedge \br_{u_3}  \rangle = 0.
	\end{align*}

\subsection{Ruled surfaces of the congruence}
	There is a geometric interpretation related to definition (\ref{def3.2}), when $\br$ is an embedding and $\bx$ is an immersion, as follows. Let $\lbrace \br(u), \bx(u) \rbrace$ be a 3-parameter line congruence and $C$ a regular curve on the reference hypersurface $\br$. If we restrict the director hypersurface $\bx$ to this curve, we obtain a ruled surface associated to the 1-parameter family of lines $\lbrace \br(s), \bx(s) \rbrace$, where $s$ is the parameter of $C$, $\br(s) = \br(u(s))$ and $\bx(s) = \bx(u(s))$. The line obtained by fixing $s$ is called a \textit{generator of the ruled surface}. These kind of ruled surfaces are called \textit{surfaces of the congruence} and since $\bx'(s) \neq 0$, it is possible to define its striction curve (see section 3.5 in \cite{manfredo} for details). In the special case where this ruled surface is developable, the points of contact of a generator with the striction curve are called  \textit{focal points}. Let us write  $\alpha(s) = \br(u(s)) + \rho(u(s))\bx(u(s)) $ as the striction curve, where $\rho(u(s))$ denotes the coordinate of the focal point relative to $\bx(u(s))$. Suppose $\alpha'(s) \neq 0$ for all $s$, then it is possible to show that $\alpha'$ is parallel to $\bx$ and assuming $\lVert \bx \rVert = 1$, $\alpha'$ is perpendicular to $\bx_{u_{i}}$, $i=1,2,3$, thus
	\begin{align*}
	\begin{cases}
	u_{1}'(h_{11} + \rho g_{11}) + u_{2}'(h_{21} + \rho g_{12}) + u_{3}'(h_{31} + \rho g_{13}) = 0\\ 
	u_{1}'(h_{12} + \rho g_{12}) + u_{2}'(h_{22} + \rho g_{22}) + u_{3}'(h_{32} + \rho g_{23}) =0\\
	u_{1}'(h_{13} + \rho g_{13}) + u_{2}'(h_{23} + \rho g_{23}) + u_{3}'(h_{33} + \rho g_{33}) = 0,
	\end{cases}
	\end{align*}
	where $g_{ij} = \langle \bx_{u_{i}}, \bx_{u_{j}} \rangle$ and $h_{ij} = \langle \br_{u_{i}}, \bx_{u_{j}} \rangle$. As we want to find a non-trivial solution for the above system, we obtain the cubic equation
	\begin{align*}
	\left| \begin{array}{ccc}
	h_{11} + \rho g_{11} & h_{21} + \rho g_{12} & h_{31} + \rho g_{13} \\ 
	h_{12} + \rho g_{12} & h_{22} + \rho g_{22} &  h_{32} + \rho g_{23}\\
	h_{13} + \rho g_{13} & h_{23} + \rho g_{23} & h_{33} + \rho g_{33}
	\end{array} \right| = 0,
	\end{align*}
	from which we obtain the coordinates $\rho_{i}$ of the focal points, $i=1,2,3$. Hence, related to each line of the congruence we have (possibly) three focal points. We define a \textit{focal set of the congruence} as
	\begin{center}
		$\bm{y}_{i}(u) = \br(u) + \rho_{i}(u)\bx(u),\; i=1,2,3 $.
	\end{center}
	Thus, for every $u_{0}$, $\bm{y}_{i}(u_{0})$ is a focal point and there is a curve in this focal set (striction curve) $\alpha(s) = \br(u(s)) + \rho_{i}(u(s))\bx(u(s)) $, such that $\alpha(s_{0}) = \bm{y}_{i}(u_{0})$ and $\alpha'(s_{0})$ is parallel to $\bx(u_{0})$, then	\begin{align}
	\langle \bx(u_{0}), \by_{iu_{1}} \wedge  \by_{iu_{2}} \wedge\by_{iu_{3}} \rangle = 0.
	\end{align}
	Therefore, the focal points are located at the focal hypersurfaces defined in (\ref{def3.2}).

\section{Generic classification of 3-parameter line congruences in $\R^4$}
In this section we use methods of singularity theory to obtain the generic singularities  of 3-parameter line congruences in $\mathbb{R}^4$. Our approach is the same as in \cite{Izumiya}, but here we are dealing with the case of 3 parameters in $\R^4$. Let $F_{(\br, \bx)}$ be a line congruence and take $x_{i}$ and $\xi_{i}$, $i=1,2,3,4$, as the coordinate functions of $\br$ and $\bx$, respectively, thus we have 
\begin{align*}
F_{(\br, \bx)}(u,t) = \left( x_{1}(u) + t\xi_{1}(u), x_{2}(u) + t\xi_{2}(u), x_{3}(u) + t\xi_{3}(u), x_{4}(u) + t\xi_{4}(u)  \right).
\end{align*}
If $(u_{0}, t_{0}) \in U \times I$ and $\xi_{4}(u_{0}) \neq 0$ then there exists $U_{4} \subset U$ an open subset given by $\lbrace u \in U: \xi_{4}(u) \neq 0  \rbrace$. Let us define
\begin{align}\label{eqa0}
c_{4}(u) = -\dfrac{x_{4}(u) - a_{0}}{\xi_{4}(u)},
\end{align}
where $u \in U_{4}$ and $a_{0} = x_{4}(u_{0}) + t_{0}\xi_{4}(u_{0})$. Therefore,
\begin{align*}
F_{(\br,\bx)}(u,t) &= \br(u)  + c_{4}(u)\bx(u) + \left( t - c_{4}(u) \right)\bx(u)\\
&= \br(u) + c_{4}(u)\bx(u) + \tilde{t}\bx(u),\; where\; \tilde{t} = t - c_{4}(u).
\end{align*}
Then, if we look at $\widetilde{F}_{(\br, \bx)}(u,\tilde{t}) = \br(u) + c_{4}(u)\bx(u) + \tilde{t}\bx(u)$ we can see that its fourth coordinate, which is denoted by $\widetilde{F}_{4}$, is $x_{4}(u) + c_{4}(u)\xi_{4}(u) + \tilde{t}\xi_{4}(u) =a_{0} + \tilde{t}\xi_{4}(u)$, by (\ref{eqa0}). Furthermore, $\widetilde{F}_{4}^{-1}(a_{0}) = \lbrace (u,0): u \in U_{4} \rbrace$ and via the Implicit Function Theorem and lemma (\ref{lema3.1}), the germ of $\widetilde{F}_{(\br, \bx)}$ at $(u_{0},0)$ is an one-dimensional unfolding of 
\begin{align*}
\tilde{f}(u) = \tilde{\pi}_{4} \circ \widetilde{F}_{(\br, \bx)}(u,0) = \left( x_{1}(u) + c_{4}(u)\xi_{1}(u), x_{2}(u) + c_{4}(u)\xi_{2}(u), x_{3}(u) + c_{4}(u)\xi_{3}(u) \right),
\end{align*}
where $\tilde{\pi}_{4}(y_{1}, y_{2}, y_{3}, y_{4}) = (y_{1}, y_{2}, y_{3}) $.

\begin{lema}
Let $F_{(\br, \bx)}: U \times I \rightarrow \R^4$ be a line congruence. With notation as above, the singularity of $\tilde{f}$ at $u_{0}$ is determined by $\tilde{\pi}_{4} \circ \br$.
\end{lema}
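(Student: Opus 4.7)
The plan is to compute the Taylor jets of $\tilde f$ at $u_0$ from the formula $\tilde f_i = x_i + c_4 \xi_i$ and argue that the essential singularity data at $u_0$ is captured by $\tilde \pi_4 \circ \br$, with the contributions of $\bx$ absorbed by the $\cK$-action after a linear change of coordinates in $\R^4$.

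First I would verify that $c_4(u_0) = t_0$ by substituting $a_0 = x_4(u_0) + t_0 \xi_4(u_0)$ into (\ref{eqa0}); in particular $\tilde f(u_0) = \tilde \pi_4(\br(u_0) + t_0 \bx(u_0))$. Differentiating (\ref{eqa0}) and using the same identity yields
\[
(c_4)_{u_j}(u_0) = -\frac{(x_4)_{u_j}(u_0) + t_0(\xi_4)_{u_j}(u_0)}{\xi_4(u_0)},
\]
and the product rule applied to $\tilde f_i = x_i + c_4 \xi_i$ then produces an explicit expression for $J\tilde f(u_0)$ entirely in terms of the partial derivatives of $\br$ and $\bx$ at $u_0$. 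Geometrically, $\tilde f$ is the composition of $\br$ with the projection of $\R^4$ along lines parallel to $\bx$ onto the hyperplane $\{y_4 = a_0\}$, followed by $\tilde \pi_4$; since $\xi_4(u_0) \neq 0$, this projection is a local diffeomorphism between that hyperplane and $\R^3$ near $\br(u_0) + t_0 \bx(u_0)$.

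To carry out the comparison cleanly I would then make a linear change of coordinates in $\R^4$ sending $\bx(u_0)$ to $(0,0,0,\xi_4(u_0))$, so that $\xi_i(u_0) = 0$ for $i = 1, 2, 3$. In these coordinates the term $(c_4)_{u_j}(u_0)\xi_i(u_0)$ vanishes from $J\tilde f(u_0)$, and the $1$-jet of $\tilde f$ at $u_0$ is read off directly from the $1$-jet of $\tilde \pi_4 \circ \br$. The hard part will be extending this matching from the $1$-jet to the full $\cK$-equivalence class of the germ: this requires showing that the higher-order correction terms coming from $c_4\,\bx$ and the derivatives of $c_4$ lie in the ideal generated by the components of $\tilde \pi_4 \circ \br - \tilde \pi_4(\br(u_0))$ after the normalization, so that they can be removed by a $\cK$-equivalence without altering the singularity class.
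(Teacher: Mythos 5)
Your computation is essentially the paper's own argument --- after a linear change of coordinates putting $\bx(u_0)=(0,0,0,\xi_4(u_0))$, compare $J\tilde f(u_0)$ with $J(\tilde\pi_4\circ\br)(u_0)$ --- but your first-order step fails as written. From $\tilde f_i = x_i + c_4\xi_i$ the product rule gives $(\tilde f_i)_{u_j}(u_0) = (x_i)_{u_j}(u_0) + (c_4)_{u_j}(u_0)\,\xi_i(u_0) + c_4(u_0)\,(\xi_i)_{u_j}(u_0)$, and your normalization only kills the middle term. Since, as you correctly computed, $c_4(u_0)=t_0$, the last term $t_0(\xi_i)_{u_j}(u_0)$ survives, so what you actually get is that the $1$-jet of $\tilde f$ agrees with that of $\tilde\pi_4\circ(\br+t_0\bx)$, not of $\tilde\pi_4\circ\br$. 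The paper avoids this by additionally normalizing $(u_0,t_0)=(0,0)$ --- legitimate, because replacing $\br$ by $\br+t_0\bx$ yields the same congruence with the germ taken at $\tilde t=0$ --- which forces $c_4(u_0)=0$ and kills both correction terms simultaneously. You need to add that normalization, or else state the conclusion for the shifted reference hypersurface.

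Your final paragraph also overshoots the target. The paper's proof stops at the equality of the Jacobian matrices, and that first-order information (the corank and kernel of $d\tilde f(u_0)$ expressed through $\br$) is all that the lemma is used for in the sequel, where $\br$ is perturbed with $\bx$ held fixed. The stronger program you describe --- recovering the full $\cK$-class of $\tilde f$ from $\tilde\pi_4\circ\br$ by showing the corrections lie in the ideal generated by the components of $\tilde\pi_4\circ\br-\tilde\pi_4(\br(u_0))$ --- cannot be carried out in general. The reason is visible in your own geometric description, which is slightly off: $u\mapsto \br(u)+c_4(u)\bx(u)$ is the intersection of the congruence line through $\br(u)$ with the hyperplane $\{y_4=a_0\}$, and the direction of projection varies with $u$; it is not the composition of $\br$ with a single projection of $\R^4$. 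Consequently the $2$-jet and higher jets of $\tilde f$ genuinely involve the derivatives of $\bx$ and can change the $\cK$-class within a fixed $1$-jet. Fix the dropped term, and scale the claim back to what the Jacobian computation actually delivers.
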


\begin{proof}
	Let us suppose $\bx_{4}(u_{0}) \neq 0$ (other cases are analogous),  $(u_{0}, t_{0}) = (0,0) \in U \times I$ and $\bx(0) = (0,0,0,1)$. Using the above notation, $c_{4}(0) = 0$, thus the jacobian matrix of $\tilde{f}$ at $0$ is equal to the jacobian matrix of $\tilde{\pi}_{4} \circ \br$ at $0$.
\end{proof}

The above lemma is important because it shows that the singularity of $\tilde{f}$, and therefore the unfolding $\widetilde{F}$, is determined by $\tilde{\pi}_{4} \circ \bx: U \rightarrow \R^3$.

\begin{lema}{\rm{({\cite{gg}, Lemma 4.6})}}\rm{(Basic Transversality Lemma)}
		Let $X$, $B$ and $Y$ be smooth manifolds with $W$ a submanifold of $Y$. Consider $j: B \rightarrow C^{\infty}\left(X, Y \right)$ a non-necessarily continuous map and define $\Phi: X \times B \rightarrow Y$ by $\Phi(x,b) = j(b)(x)$. Suppose $\Phi$ smooth and transversal to $W$, then the set
	\begin{align*}
	\left\lbrace b \in B: j(b) \pitchfork W \right\rbrace
	\end{align*}
	is a dense subset of $B$.
\end{lema}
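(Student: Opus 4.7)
The plan is to apply the parametric transversality theorem. Since $\Phi \pitchfork W$, I would first show that $M := \Phi^{-1}(W)$ is a submanifold of $X \times B$, then invoke Sard's theorem for the projection $\pi: M \to B$ to obtain a dense set of regular values, and finally verify that at each regular value $b_0$ the slice $j(b_0): X \to Y$ is itself transverse to $W$.

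First I would apply the standard preimage theorem: transversality of the smooth map $\Phi$ to $W$ yields that $M = \Phi^{-1}(W)$ is a smooth submanifold of $X \times B$, of codimension equal to $\operatorname{codim}_Y W$. Let $\pi: M \to B$ be the restriction of the canonical projection $X \times B \to B$. This is a smooth map between manifolds, so by Sard's theorem the set of regular values of $\pi$ is dense in $B$ (indeed, its complement has Lebesgue measure zero).

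The heart of the argument is showing that every regular value $b_0$ of $\pi$ satisfies $j(b_0) \pitchfork W$. Writing $\Phi_{b_0}(x) = \Phi(x,b_0) = j(b_0)(x)$, I would fix $x_0 \in X$ with $y_0 := \Phi_{b_0}(x_0) \in W$ and aim to show $D\Phi_{b_0}(x_0)(T_{x_0}X) + T_{y_0}W = T_{y_0}Y$. Given $v \in T_{y_0}Y$, transversality of $\Phi$ at $(x_0,b_0)$ supplies $(a,w) \in T_{x_0}X \oplus T_{b_0}B$ and $u \in T_{y_0}W$ with $v = D\Phi(x_0,b_0)(a,w) + u$. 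The regular-value hypothesis, together with $(x_0,b_0) \in M$, yields some $(a',w) \in T_{(x_0,b_0)}M$ with the same $B$-component $w$; by definition of $M$, $D\Phi(x_0,b_0)(a',w) \in T_{y_0}W$. Subtracting, $v - D\Phi_{b_0}(x_0)(a - a')$ lies in $T_{y_0}W$, which is exactly the required transversality. In the remaining case $\pi^{-1}(b_0) = \emptyset$, the map $\Phi_{b_0}$ avoids $W$ altogether and transversality is vacuous.

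The main obstacle is essentially notational: one must cleanly separate the differential $D\Phi$ on the product from its restriction $D\Phi_{b_0}$ to a slice, and use the regular-value hypothesis at precisely the right moment to extract a tangent vector to $M$ with the prescribed $B$-component. A minor subtlety is the possibility $\dim M < \dim B$: then regular values of $\pi$ are exactly the points outside $\pi(M)$, but Sard still delivers density (with full measure), and transversality is automatic there. It is worth noting that continuity of $j$ plays no role in this scheme, in agreement with the hypotheses of the lemma.
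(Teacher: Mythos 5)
Your proof is correct and is precisely the standard parametric-transversality argument: form $M=\Phi^{-1}(W)$, apply Sard's theorem to the projection $\pi\colon M\to B$, and check that regular values $b_0$ of $\pi$ are exactly those for which $j(b_0)\pitchfork W$ (with the vacuous case $j(b_0)(X)\cap W=\emptyset$ handled separately). The paper states this lemma without proof, citing Golubitsky--Guillemin, and your argument is essentially the one given there, so there is nothing to object to.
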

 The next lemma is the result for 3-parameter line congruences in $\R^4$ which corresponds to the lemma 4.1 in \cite{Izumiya}.
\begin{lema}\label{Lema4.1}
Let $W \subset J^{k}(3,3)$ be a submanifold. For any fixed map germ  $\bx: U \rightarrow \R^{4} \setminus \lbrace 0 \rbrace$ and any fixed point $(u_{0}, t_{0}) \in U \times I$ with $\xi_{4}(u_{0}) \neq 0$, the set 
\begin{align*}
T^{\bx}_{4,W, (u_{0}, t_{0})} = \left\lbrace \br \in C^{\infty}(U, \R^4): j^{k}_{1} \left( \tilde{\pi}_{4} \circ \widetilde{F}_{(\br,\bx)} \right) \pitchfork W \ \ at \ \ (u_{0}, t_{0})  \right\rbrace
\end{align*}
is a residual subset of $C^{\infty}\left( U, \R^{4} \right)$.
\end{lema}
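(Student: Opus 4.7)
The strategy is to apply the Basic Transversality Lemma with $B = C^\infty(U,\R^4)$, $X$ a small open neighborhood of $(u_0,t_0)$ in $U\times I$, and $Y = J^k(3,3)$, taking $j:B\to C^\infty(X,Y)$ to be $j(\br) = j^k_1(\tilde{\pi}_{4}\circ\widetilde{F}_{(\br,\bx)})|_X$. The associated evaluation map
\[
\Phi : X \times B \longrightarrow J^k(3,3), \qquad \Phi((u,\tilde{t}),\br) = j^k_1\bigl(\tilde{\pi}_{4}\circ\widetilde{F}_{(\br,\bx)}\bigr)(u,\tilde{t}),
\]
is smooth. The goal is to show $\Phi$ is transverse to $W$, which by the Basic Transversality Lemma yields density of the desired set in $B$.

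The core step is to prove $\Phi$ is actually a submersion, by exploiting linear perturbations confined to the first three components of $\br$. Fix $\br_0$ and $(u_0,t_0)$, and consider the one-parameter family $\br_0 + s\varphi$ with $\varphi = (\varphi_1,\varphi_2,\varphi_3,0)\in C^\infty(U,\R^4)$. Because $\varphi_4\equiv 0$, the scalar $c_4(u)$ defined in (\ref{eqa0}) does not depend on $s$, so
\[
\tilde{\pi}_{4}\circ\widetilde{F}_{(\br_0+s\varphi,\bx)}(u,\tilde{t}) = \tilde{\pi}_{4}\circ\widetilde{F}_{(\br_0,\bx)}(u,\tilde{t}) + s(\varphi_1,\varphi_2,\varphi_3)(u).
\]
Differentiating in $s$ at $s=0$ gives the infinitesimal variation $(\varphi_1,\varphi_2,\varphi_3)(u)$, which is entirely free. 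Choosing the $\varphi_i$ to be arbitrary polynomials of degree $\le k$ in $(u-u_0)$, smoothly extended to $U$, realizes every prescribed $k$-jet at $u_0$ as a variation of $j^k_1(\tilde{\pi}_{4}\circ\widetilde{F}_{(\br_0,\bx)})(u_0,t_0)$. Hence the partial derivative $d_\br\Phi$ is already surjective onto $T_{\Phi((u_0,t_0),\br_0)}J^k(3,3)$, so $\Phi$ is a submersion and in particular transverse to $W$.

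Granted this, the Basic Transversality Lemma produces a dense subset of $B$ consisting of $\br$ for which $j(\br)\pitchfork W$ throughout $X$; restricting the transversality to the single point $(u_0,t_0)$ then yields density of $T^{\bx}_{4,W,(u_0,t_0)}$. To upgrade density to residuality, I would argue that transversality to the locally closed submanifold $W$ at a fixed point depends only on the $(k+1)$-jet of $\tilde{\pi}_{4}\circ\widetilde{F}_{(\br,\bx)}$ at $(u_0,t_0)$, and this jet varies continuously with $\br$ in the Whitney $C^\infty$ topology; since the transversality condition is open on jets, $T^{\bx}_{4,W,(u_0,t_0)}$ is in fact open, hence open and dense, hence residual. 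The principal obstacle is the submersion verification; the trick of restricting the perturbation to the first three coordinates of $\br$ sidesteps the nonlinear dependence of $c_4$ on the fourth component $x_4$ and makes the argument clean.
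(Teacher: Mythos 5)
The paper itself does not prove this lemma (it defers to Lemma 4.1 of \cite{Izumiya}), and your reconstruction captures the essential content of that argument: the observation that a perturbation $\br_0+s\varphi$ with $\varphi_4\equiv 0$ leaves $c_4$ and $a_0$ untouched, so that the induced variation of $\tilde{\pi}_4\circ\widetilde{F}$ is exactly $s(\varphi_1,\varphi_2,\varphi_3)$ and every $k$-jet at $u_0$ is realized by polynomial choices of the $\varphi_i$, is correct and is the heart of the proof. Two points in your write-up need repair, though. First, the Basic Transversality Lemma as stated requires $B$ to be a (finite-dimensional) manifold, so you cannot take $B=C^{\infty}(U,\R^4)$ directly; you should take $B$ to be the affine finite-dimensional family $\br_0+V$, where $V$ is the space of polynomial maps of degree $\le k$ with vanishing fourth component, conclude density of the transversal parameters in $V$, and then deduce density in $C^{\infty}(U,\R^4)$ from the arbitrariness of $\br_0$ and the smallness of the perturbations.

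Second, and more substantively, your upgrade from density to residuality is wrong as stated: transversality to $W$ \emph{at the fixed point} $(u_0,t_0)$ is not an open condition on $\br$ when $W$ is merely a locally closed submanifold, because a sequence $\br_n$ with $j^{k}_{1}(\tilde{\pi}_4\circ\widetilde{F}_{(\br_n,\bx)})(u_0,t_0)\notin W$ (hence vacuously transverse) can converge to a map whose jet lands in $\overline{W}\setminus W$ or lands in $W$ non-transversally. So $T^{\bx}_{4,W,(u_0,t_0)}$ need not be open, and ``dense'' alone does not give ``residual.'' The standard fix is to exhaust $W$ by countably many relatively compact coordinate pieces $C_r$ with $\overline{C_r}\subset W$ compact: for each $r$ the set of $\br$ whose jet at $(u_0,t_0)$ either misses $\overline{C_r}$ or meets $W$ transversally near $C_r$ \emph{is} open (compactness of $\overline{C_r}$ is what makes the complement condition open), and it is dense by your perturbation argument; the intersection over $r$ is precisely $T^{\bx}_{4,W,(u_0,t_0)}$ and is residual. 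With these two adjustments your argument is a complete and correct proof, in the same spirit as the one in the cited reference.
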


\begin{proof}
	See lemma 4.1 in \cite{Izumiya}.	
	\end{proof}

If $\xi_{j}(u_{0}) \neq 0$, $j=1,2,3$, we can define the set
\begin{align*}
T^{\bx}_{j,W, (u_{0}, t_{0})} = \left\lbrace \br \in C^{\infty}(U, \R^4): j^{k}_{1} \left( \tilde{\pi}_{j} \circ \widetilde{F}_{(\br,\bx)} \right) \pitchfork W \ \ at \ \ (u_{0}, t_{0})  \right\rbrace,\; j=1,2,3
\end{align*}
where $\tilde{\pi}_{j}$ is the projection in the coordinates different than $j$. Thus, the above lemma holds for the sets $T^{\bx}_{j,W, (u_{0}, t_{0})}$, $j=1,2,3,4$.

\begin{obs}\label{obsconj}\normalfont
Define
\begin{align*}
{\cal{O}}_{1} =& \left\{ \bx \in C^{\infty}\left(U, \R^{4} \setminus \lbrace \textbf{0} \rbrace \right): \bx_{u_1} \wedge \bx_{u_2} \wedge \bx \neq \textbf{0},\; or\; \bx_{u_1} \wedge \bx_{u_3} \wedge \bx \neq \textbf{0},\right. \\     & \left. or\;   \bx_{u_2} \wedge \bx_{u_3} \wedge \bx \neq \textbf{0} , \forall\; u \in U \right\}
\end{align*}
Then, ${\cal{O}}_{1}$ is residual as follows. Take the matrix $\left[ \bx \ \ \bx_{u_1} \ \ \bx_{u_2} \ \ \bx_{u_3} \right]$ and suppose that $\bx \notin {\cal{O}}_{1} $. Thus, $ \bx_{u_i}(u_{0}) \wedge \bx_{u_j}(u_{0}) \wedge \bx(u_{0}) = 0$, for some $u_{0} \in U$ and $i,j=1,2,3$, i.e., the sets $\lbrace \bx(u_{0}), \bx_{u_i}(u_{0}), \bx_{u_j}(u_{0}) \rbrace$ are linearly dependent. So we have two cases: 
\begin{enumerate}
	\item  $\lbrace \bx_{u_1}(u_{0}), \bx_{u_2}(u_{0}), \bx_{u_3}(u_{0}) \rbrace$ is a LI set, then we would have $\bx(u_{0})=0$. Contradiction.
	\item $\lbrace \bx_{u_1}(u_{0}), \bx_{u_2}(u_{0}), \bx_{u_3}(u_{0}) \rbrace$ is a LD set, thus, $\left[ \bx(u_{0}) \; \bx_{u_1}(u_{0})\; \bx_{u_2}(u_{0})\; \bx_{u_3}(u_{0}) \right]$ has rank less than or equal to $2$. 
\end{enumerate} 
Let $\Sigma$ be a submanifold of the space of $4 \times 4$ matrices formed by the matrices with $rank \leq 2$, i.e., in which the minors of order $3 \times 3$ are zero, so, $\Sigma$ has codimension $4$. Since $\bx \in C^{\infty}\left(U, \R^{4} \setminus \lbrace \textbf{0} \rbrace \right)$ is such that $j^{1}\bx \pitchfork \Sigma$ and $U$ is an open subset of $\R^3$, we have $j^{1}\bx(U) \cap \Sigma = \emptyset$, what happens if, and only if, $\bx \in {\cal{O}}_{1}$. Therefore, by Thom's Transversality Theorem, ${\cal{O}}_{1}$ is residual. Note above that we are denoting $j^{1}\bx(u) = [\bx(u) \ \ \bx_{u_1}(u) \ \ \bx_{u_2}(u) \ \ \bx_{u_3}(u)]$.

Thus, it follows from lemma (\ref{Lema4.1}) that
\begin{align*}
\widetilde{T}_{4,W, (u_{0}, t_{0})} &= \left\lbrace (\br, \bx): j^{k}_{1} \left( \tilde{\pi}_{4} \circ \widetilde{F}_{(\br, \bx)} \right) \pitchfork W\; at \; (u_{0}, t_{0}), \bx \in  {\cal{O}}_{1}  \right\rbrace
\end{align*}
is residual.
\end{obs}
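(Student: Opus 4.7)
My plan is to establish residuality of $\cO_1$ first, and then bootstrap to $\widetilde{T}_{4,W,(u_{0},t_{0})}$ by combining with Lemma \ref{Lema4.1}. For the first part, the natural tool is Thom's Transversality Theorem applied to the $1$-jet extension $j^{1}\bx: U \to J^{1}(U, \R^4)$, which I will identify with the matrix $[\bx(u)\ \bx_{u_1}(u)\ \bx_{u_2}(u)\ \bx_{u_3}(u)]$. The key observation is that the three wedge conditions defining $\cO_1$ can be recast as an algebraic rank condition on this $4\times 4$ matrix, pulling the problem into the well-understood stratification of the space of matrices by rank.

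The heart of the argument is the following case analysis that translates the failure of the $\cO_1$ conditions into a rank-drop statement. Suppose $\bx \notin \cO_1$, so that $\bx \wedge \bx_{u_i} \wedge \bx_{u_j} = 0$ at some $u_0$ for every pair $(i,j)$. If $\{\bx_{u_1}(u_0), \bx_{u_2}(u_0), \bx_{u_3}(u_0)\}$ were linearly independent, then $\bx(u_0)$ would have to lie in the intersection of the three $2$-planes $\mathrm{span}\{\bx_{u_i}(u_0), \bx_{u_j}(u_0)\}$, which is $\{0\}$, contradicting $\bx \in \R^4 \setminus \{\bm{0}\}$. Otherwise $\{\bx_{u_i}(u_0)\}$ spans at most a $2$-plane $V$, and since at least one of the pair conditions forces $\bx(u_0) \in V$ as well, the full $4\times 4$ matrix has rank $\leq 2$. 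So $\cO_1^{c}$ is precisely the set of $\bx$ whose $1$-jet image meets the rank-$\leq 2$ locus $\Sigma$ in the space of $4\times 4$ matrices. A standard computation gives $\mathrm{codim}\, \Sigma = (4-2)^2 = 4$, and since $\dim U = 3 < 4$, transversality to $\Sigma$ is equivalent to avoidance. Thom's Transversality Theorem then yields that the set of $\bx$ with $j^{1}\bx \pitchfork \Sigma$ is residual, hence $\cO_1$ is residual.

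For the second conclusion, I will combine this with Lemma \ref{Lema4.1} via a product-space argument: residuality of $\{(\br,\bx) : \bx \in \cO_1\}$ in $C^{\infty}(U,\R^4) \times C^{\infty}(U, \R^4\setminus\{\bm{0}\})$ follows from the first part, and residuality of the transversality condition on $(\br,\bx)$ follows by applying the Basic Transversality Lemma with parameter space $C^{\infty}(U,\R^4) \times C^{\infty}(U, \R^4\setminus\{\bm{0}\})$ (or equivalently by an iterated use of Lemma \ref{Lema4.1} together with a Fubini-type Baire argument on the two factors). Intersecting the two residual sets gives $\widetilde{T}_{4,W,(u_0,t_0)}$.

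I expect the main obstacle to be the case analysis in the second paragraph: one has to be careful that the three wedge vanishing conditions together really do force $\bx(u_0) \in V$ even when the partials degenerate further (to rank $1$ or rank $0$), so that no branch of the analysis escapes the rank-$\leq 2$ conclusion. The codimension count and the application of Thom's theorem are standard once the correct stratum has been identified.
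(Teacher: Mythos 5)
Your proposal is correct and follows essentially the same route as the paper: recasting the failure of the $\cO_1$ conditions as the rank-$\leq 2$ condition on $[\bx\ \bx_{u_1}\ \bx_{u_2}\ \bx_{u_3}]$, using codimension $4 > \dim U$ so that transversality to $\Sigma$ forces avoidance, applying Thom's Transversality Theorem, and then intersecting with the residual set from Lemma (\ref{Lema4.1}). Your case analysis is in fact slightly more careful than the paper's (you explicitly justify that $\bx(u_0)$ must lie in the span of the partials when they span exactly a $2$-plane), but the argument is the same.
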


Now, we are able to prove our first main theorem, which provides a classification of the generic singularities of 3-parameter line congruences in $\R^4$.
\begin{teo}\label{teo4.1}
	There is an open dense set ${\cal{O}} \subset C^{\infty} \left( U, \mathbb{R}^4 \times (\R^{4} \setminus \lbrace  0 \rbrace) \right)$, such that:
	
	\begin{enumerate}[(a)]
		\item  For all $(\br, \bx) \in \cal{O}$, the germ of the line congruence $F_{(\br, \bx)}$ at any point $(u_{0}, t_{0}) \in U \times I$ is stable;
		\item For all $(\br, \bx) \in \cal{O}$, the germ of the line congruence $F_{(\br, \bx)}$ at any point $(u_{0}, t_{0}) \in U \times I$ is a 1-parameter versal unfolding of a germ $f: (\R^3,  u_{0}) \rightarrow \R^3$ at $t = t_{0}$. Then, $F_{(\br, \bx)}$ is $\cal{A}$-equivalent to one of the normal forms below
	\end{enumerate}
	
	\begin{itemize}
		\item $(x,y,z,w) \mapsto (x,y,w, z^2)$ {\rm{(Fold)}}.
		\item $(x,y,z,w) \mapsto (x,y,w, z^3 + xz)$ {\rm{(Cusp)}}.
		\item $(x,y,z,w) \mapsto (x,y,z^3 + (x^2 \pm y^2)z + wz, w)$ {\rm{(Lips/Beaks)}}.
		\item $(x,y,z,w) \mapsto (x,y,w,z^4 + xz + yz^2)$ {\rm{(Swallowtail)}}.
		\item $(x,y,z,w) \mapsto (x,y,w,z^4 + xz \pm y^2z^2 + wz^2)$.
		\item $(x,y,z,w) \mapsto (x,y,w,z^5 + xz + yz^2 + wz^3)$ {\rm{(Butterfly)}}.
		\item $\left( x,y,z,w \right) \mapsto (z, x^2 + y^2 + zx + wy, xy, w) $ {\rm{(Hyperbolic Umbilic)}}.
		\item $\left( x,y,z,w \right) \mapsto (z, x^2 - y^2 + zx + wy, xy, w) $ {\rm{(Elliptic Umbilic)}}.		
	\end{itemize}
\end{teo}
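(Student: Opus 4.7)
My plan is to reduce the study of the $4$-dimensional map germ $F_{(\br,\bx)}:(U\times I,(u_0,t_0))\to\R^4$ to the study of auxiliary $3$-to-$3$ map germs $\tilde f:(\R^3,u_0)\to(\R^3,0)$, and then run a jet-transversality argument on these auxiliary maps. For a generic $\bx$ (in particular, satisfying $\bx(u_0)\neq\bm{0}$, which is forced by membership in $\cal O_1$ of Remark \ref{obsconj}), some component $\xi_j(u_0)$ is nonzero; after a linear change in the target we may take $j=4$. Lemma \ref{lema3.1} then exhibits $\widetilde F_{(\br,\bx)}$ as a one-parameter unfolding of the germ $\tilde f$ constructed just above Lemma 4.1, and the lemma preceding it says that the singularity type of $\tilde f$ at $u_0$ is determined by $\tilde\pi_4\circ\br$.

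Next, I enumerate the $\cK$-orbits in $J^k(3,3)$, for $k$ sufficiently large, of $\cA_e$-codimension at most $1$: by Lemma \ref{lema3.2}, exactly these germs have one-parameter $\cA$-versal unfoldings that are infinitesimally $\cA$-stable as map germs $(\R^4,0)\to(\R^4,0)$. Since the pair $(4,4)$ lies in Mather's nice range, the stable classification of map germs $(\R^4,0)\to(\R^4,0)$ is known and produces precisely the eight normal forms in the statement: the Morin chain $A_1,\dots,A_4$ (Fold, Cusp, Swallowtail, Butterfly) together with the two corank-$1$ $\cA_e$-codim-$1$ germs whose higher-jet presentations are the fifth and sixth items, and the two corank-$2$ umbilics (Hyperbolic and Elliptic). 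Every other $\cK$-orbit has $\cA_e$-codimension $\geq 2$, so its preimage under $j^k_1$ has codimension strictly larger than $4=\dim(U\times I)$, hence is avoided by generic jets.

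The density step then combines Lemma \ref{Lema4.1}, its version for $\tilde\pi_j$ with $j=1,2,3$, and the extension to pairs $(\br,\bx)\in C^\infty(U,\R^4)\times\cal O_1$ of Remark \ref{obsconj}. Covering $U\times I$ by countably many relatively compact open sets and intersecting the resulting residual conditions, together with the openness of the transversality condition inside each chart, produces an open dense subset $\cal O\subset C^\infty\bigl(U,\R^4\times(\R^4\setminus\{\bm 0\})\bigr)$ on which $j^k_1\tilde f$ is simultaneously transverse to all $\cK$-orbits of $\cA_e$-codim $\leq 1$ and misses all orbits of larger $\cA_e$-codimension. Lemma \ref{lema3.2} then yields infinitesimal $\cA$-stability of $F_{(\br,\bx)}$ at every $(u_0,t_0)$, giving part (a); part (b) follows by reading off the $\cA$-normal forms from the classification.

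The main obstacle, flagged in the introduction, is the corank-$2$ stratum. A single $\cK$-orbit of a corank-$2$ germ $\R^3\to\R^3$ in general refines into several $\cA$-orbits with distinct $\cA_e$-codimensions, so transversality to the $\cK$-stratum alone does not guarantee that the unfolded map is $\cA$-stable. I therefore need to refine the relevant $\cK$-stratum into its $\cA$-orbits, verify that exactly the hyperbolic and elliptic umbilic $\cA$-orbits are the ones of $\cA_e$-codimension $1$ (and that together they form an open subset of the $\cK$-stratum), and then show that generic $(\br,\bx)$ picks out jets lying in this open subset. Only after this $\cK$-to-$\cA$ refinement can Lemma \ref{lema3.2} be invoked on the corank-$2$ strata to produce the Hyperbolic and Elliptic Umbilic normal forms at the end of the list.
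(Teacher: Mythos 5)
Your overall strategy---reduce to the slice germ via Lemma \ref{lema3.1}, run jet transversality in $J^{k}(3,3)$ using Lemma \ref{Lema4.1} and Remark \ref{obsconj}, take a countable cover of $U\times I$, and invoke Lemma \ref{lema3.2}---is the paper's strategy, but two of your logical pivots are off. First, you assert that transversality to a $\cK$-stratum ``does not guarantee that the unfolded map is $\cA$-stable'' and that Lemma \ref{lema3.2} can only be applied to the corank-$2$ strata after refining them into $\cA$-orbits. This inverts the content of Lemma \ref{lema3.2}, which is precisely Mather's theorem that transversality of $j^{k}_{1}f$ to the $\cK$-orbit of the central germ already yields infinitesimal $\cA$-stability of the unfolding. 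In the paper, part (a) is proved entirely at the level of $\cK$-orbits: the bad set is $\Pi_{k}(3,3)=\lbrace \mathrm{cod}_{e}(\cK)\geq 5\rbrace$, stratified by corank, and the only work specific to corank $2$ is the classification of $2$-jets in $H^{2}(2,2)$ showing that exactly the two orbits $W_{1},W_{2}$ (with $\mathrm{cod}_{e}(\cK)=4$) survive, the rest of $\overline{\Sigma^{2}}$ having codimension $\geq 5$; corank $3$ is killed by $\mathrm{codim}\,\Sigma^{3}=9$. You never carry out this count, and your substitute bookkeeping (``every other $\cK$-orbit has $\cA_{e}$-codimension $\geq 2$'') uses an invariant that is not defined on $\cK$-orbits.

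Second, the $\cK$-to-$\cA$ refinement is genuinely needed, but for part (b) and chiefly for the corank-one classes, not where you place it. The stable classification of germs $(\R^{4},0)\rightarrow(\R^{4},0)$ yields only six singular $\cA$-classes, not eight: the Lips/Beaks form and the fifth form are $\cA$-equivalent, as germs $\R^{4}\rightarrow\R^{4}$, to the Cusp and to the Swallowtail-type unfolding respectively. They appear as separate items only because the theorem records which $\cA$-orbit of $3$-to-$3$ germ occurs in the slice $t=t_{0}$, and this requires listing, inside each $\cK$-class $A_{2},A_{3},A_{4}$, the $\cA$-orbits of $\cA_{e}$-codimension $\leq 1$ (via the Marar--Tari normal forms $(x,y,z^{3}+P(x,y)z)$, etc., and Bruce's classification for $W_{1},W_{2}$) and checking that the complement of their union still has codimension $\geq 5$. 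Your plan, which reserves the refinement for the umbilics and expects all eight forms to drop out of the stable classification of $\R^{4}\rightarrow\R^{4}$, would miss exactly the Lips/Beaks and the fifth normal form.
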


\begin{proof} We first prove item $(a)$.
	Given $f \in \cE_{3,3}$ and $z = j^{k}f(0)$, define
	\begin{align*}
	{\cal{K}}^{k}(z) = \lbrace j^kg(0): g \isEquivTo{\K} f \rbrace.
	\end{align*}
	For a sufficiently large $k$, define $$\Pi_{k}(3,3) = \lbrace f \in J^{k}(3,3):\, cod_{e}({\cal{K}}, f) \geq 5  \rbrace.$$ Consider $$\Sigma^{i} = \lbrace \sigma \in J^{1}(3,3): \text{kernel rank}(\sigma) = i \rbrace \subset J^{1}(3,3),$$
	which is a submanifold of  codimension $i^2$.
	
	\begin{enumerate}
		\item 	We look at the slice of $\Pi_{k}(3,3)$ in $\Sigma^{1}$, i.e.,  $f \in \Pi_{k}(3,3)$ such that $kernel\; rank(df(0)) = 1$. Then, we are dealing with $f \in \Pi_{k}(3,3)$ of $corank$ $1$. Therefore, we can write $f(x,y,z) = (x,y, g(x,y,z))$, where $g(0,0,z)$ has a singularity of $A_{r}$ type, for some $5 \leq r \leq k-1$ and we call them ${\cal{K}}$-singularities of $A_{r}$-type. Note that if we regard the ``good" set as the complement of $\Pi_{k}(3,3)$ in $\Sigma^{1}$, then its singularities are the ${\cK}$-singularities of $A_{1}$, $A_{2}$, $A_{3}$ and $A_{4}$-type. Therefore, the slice $\Pi_{k}(3,3) \cap \Sigma^{1} $ is a semialgebraic set of codimension greater than or equal to $5$, so it has a stratification $\lbrace {\cal{S}}_{i}^{1} \rbrace_{i=1}^{m_{1}}$, with $codim({\cal{S}}_{i}^{1})\geq 5$.

	\item As we did in the first case, define $\Pi_{k}(3,3) \cap \Sigma^{2} $, i.e., the set of $f \in \Pi_{k}(3,3)$ of $corank$ $2$.  We may assume that $f(x,y,z) = (z, g_{1}(x,y,z), g_{2}(x,y,z)) $, where $g_{i}$ has zero $1$-jet and $(g_{1}(x,y,0), g_{2}(x,y,0))$ has $2$-jet in $H^{2}(2,2)$, therefore, $(g_{1}(x,y,0), g_{2}(x,y,0))$ has $2$-jet given by one of the normal forms below (See \cite{gib} or \cite{juanjo}):
	\begin{align*}
	(x^2 + y^2, xy);\; (x^2-y^2, xy);\; (x^2, xy);\; (x^2, 0);\; (x^2 \pm y^2, 0);\; (0,0).
	\end{align*}
	Hence, by looking at the first two normal forms and its local algebras, $f$ is $\cK$-equivalent to one of the forms below:
	\begin{itemize}
		\item $W_{1}: \left(z, x^2 + y^2 + xz, xy \right)$
		\item $W_{2}: \left(z, x^2 - y^2 + xz, xy \right)$
		\end{itemize}
	and both of these forms have $cod_{e}({\cal{K}}) = 4$. The other ${\cal{K}}$-orbits have $cod_{e}({\cal{K}})\geq5$.
	Note that $\overline{\Sigma^{2}} \setminus \left( W_{1} \cup W_{2} \right) $ is a semialgebraic set of codimension greater than or equal to $5$.  $\Pi_{k}(3,3) \cap \Sigma^{2}$ is a semialgebraic set contained in $ \overline{\Sigma^{2}} \setminus \left( W_{1} \cup W_{2} \right)$, then its codimension is greater than or equal to $5$, thus, there is a stratification $\lbrace {\cal{S}}_{i}^{2} \rbrace_{i=1}^{m_{2}}$ of it, with $codim({\cal{S}}_{i}^{2})\geq 5$. 
		Furthermore, the ``good" set contains only   $W_{1}$ and $W_{2}$.

		\item In a similar way, we define $\Pi_{k}(3,3) \cap \Sigma^{3} $, i.e., the set of the $k$-jets $f \in \Pi_{k}(3,3)$ whose $corank$ is $3$. It is well-known that $\Sigma^{3}$ has codimension $9$, so $\Pi_{k}(3,3) \cap \Sigma^{3} $ is a semialgebraic set of codimension greater than or equal to $9$, hence, there is a stratification  $\lbrace {\cal{S}}_{i}^{3} \rbrace_{i=1}^{m_{3}}$, with $codim({\cal{S}}_{i}^{3})> 5$.

	Then, it follows that the ``good" set, i.e., the set of the ${\cal{K}}$-orbits of codimension less than or equal to $4$, contains the following ${\cal{K}}$-orbits
	\begin{itemize}
		\item type $A_{r}$, for $1 \leq r \leq 4$;
		\item type $W_{1}$;
		\item type $W_{2}$.
	\end{itemize}
	
\end{enumerate}

Applying lemma (\ref{Lema4.1}) and remark (\ref{obsconj}) to each strata of the above stratification, we obtain that
\begin{align*}
\cT_{j} = \bigcap\limits_{i=1}^{m_{j}}\widetilde{T}_{4,S_{i}^{j}, (u_{0}, t_{0})},\; j=1,2,3\\
\cT_{3+r} = \widetilde{T}_{4,A_{r}, (u_{0}, t_{0})}, \, 1\leq r \leq 4 \\
\cT_{7+i} = \widetilde{T}_{4,W_{i}, (u_{0}, t_{0})}, \, i=1,2.
\end{align*} are residual subsets of $C^{\infty}(U, \R^{4} \times (\R^{4} \setminus \lbrace 0 \rbrace))$. Hence,
\begin{align*}
	{\cal{O}}_{4,(u_{0}, t_{0})} =  \bigcap\limits_{i=1}^{9}\cT_{i} 
	\end{align*}
is residual. The same is true for the sets ${\cal{O}}_{j,(u_{0}, t_{0})}$, $j=1,2,3$, defined in a similar way.

Since $\bx(u) \neq 0$ for all $u \in U$, given a point $(u_{0}, t_{0}) \in U \times I$, $\xi_{j}(u_{0}) \neq 0$, for some $j$, there is a residual set ${\cal{O}}_{(u_{0}, t_{0})} \subset C^{\infty}(U, \R^{4} \times (\R^{4} \setminus \lbrace 0 \rbrace))$, such that 
\begin{align*}
(\br,\bx) \in {\cal{O}}_{(u_{0}, t_{0})} \Leftrightarrow j^{k}_{1}\left( \tilde{\pi}_{j} \circ \widetilde{F}_{(\br,\bx)} \right) \pitchfork {\cal{A}}_{r},\, W_{1}, \, W_{2}, \, {\cal{S}}^{j}_{i},\, j=1,2,3, \, r=1, \cdots, 4.
\end{align*}
It follows from what we already have done that the germ of $\widetilde{F}_{(\br,\bx)}$ at $(u_{0}, 0)$, which is equivalent to the germ of $F_{(\br,\bx)}$ at $(u_{0}, t_{0})$, is a 1-dimensional unfolding of $\tilde{\pi}_{j} \circ \widetilde{F}(u,0)$ and it follows from lemma (\ref{lema3.2}) that $F_{(\br,\bx)}$ is ${\cal{A}}$-infinitesimally stable for all $(\br,\bx) \in {\cal{O}}_{(u_{0}, t_{0})}$. Since a germ  ${\cal{A}}$-infinitesimally stable is ${\cal{A}}$-stable (see \cite{Mather}), there is a neighborhood $U_{u_{0}} \times I_{t_{0}}$ of $(u_{0}, t_{0})$ in $U \times I$, such that $F_{(\br,\bx)}{\big|}_{U_{u_{0}} \times I_{t_{0}}}$ is ${\cal{A}}$-stable. This result holds independently of the fixed point $(u_{0}, t_{0})$, so we can consider a countable family of points $(u_{i}, t_{i}) \in U \times I$ and neighborhoods $U_{u_{i}} \times I_{t_{i}}$, $(i=1,2, \cdots)$, such that $F_{(\br,\bx)}{\big|}_{U_{u_{i}} \times I_{t_{i}}}$ is ${\cal{A}}$-stable and 
\begin{align*}
U \times I = \bigcup\limits_{i=1}^{\infty} U_{u_{i}} \times I_{t_{i}}.
\end{align*}
Since ${\cal{O}}_{(u_{i}, t_{i})}$ is a residual subset of $ C^{\infty}(U, \R^{4} \times (\R^{4} \setminus \lbrace 0 \rbrace))$, it follows that
\begin{align*}
{\cal{O}}_{2} = \bigcap\limits_{i=1}^{\infty} {\cal{O}}_{(u_{i}, t_{i})}
\end{align*}
is residual. Furthermore, the germ of $F_{(\br,\bx)}$ at any point $(u,t) \in U \times I$ is ${\cal{A}}$-infinitesimally stable, for all $(\br,\bx) \in {\cal{O}}_{2} $.

Since ${\cal{F}}: C^{\infty}(U, \R^{4} \times (\R^{4} \setminus \lbrace 0 \rbrace)) \rightarrow C^{\infty}(U \times I, \R^{4})$, defined by ${\cal{F}}(\br,\bx)= F_{(\br,\bx)}$, is continuous and
\begin{align*}
S=\lbrace f \in  C^{\infty}(U \times I, \R^{4}): f \ \ {\cal{A}}-infinitesimally \ \ stable \rbrace
\end{align*}
is open (See \cite{gg} p. 111), ${\cal{O}} = {\cal{F}}^{-1}(S) $ is open. By previous arguments ${\cal{O}}_{2}\subset {\cal{O}}$ and ${\cal{O}}_{2}$ is dense, therefore ${\cal{O}}$ is an open dense subset.

To prove $(b)$, we refine the $\cK$-orbits of type $A_2$ and $A_3$ of the above stratification, by taking the $\cA$-orbits of $\cA_e$-codimension $\leq 1$ inside these $\cK$-orbits. Then, the relevant strata in this stratification are the $\cA$-orbits of stable singularities $A_k$, $k=1, 2, 3$, and the $\cA$-orbits of singularities of $\cA_e$-codimension $1$ of type $A_2$, $A_3$, $A_{4}$ and $D_4$. The complement of their union is a semialgebraic set of codimension greater than or equal to $5$.

\begin{enumerate}
\item ${\cal{K}}$-orbit of $A_{1}$ type 

$f(x,y,z) = (x,y,z^2)$ which is stable, hence, we have just this  ${\cal{A}}$-orbit. Its suspension in $\R^4$ is the stable germ that we are looking for.

\item ${\cal{K}}$-orbits of $A_{2}$ type 

It follows from the classification made by Marar and Tari \cite{Marar}, that the possible normal forms are
\begin{align*}
f(x,y,z) = \left( x, y, z^3 + P(x,y)z \right),
\end{align*}
where $P(x,y)$ is one of the singularities $A_{k}$, $D_{k}$, $E_{6}$, $E_{7}$ or $E_{8}$ and $\text{cod}_{\text{e}}\left( {\cal{A}}, f \right) = \mu(P)$.

As we are looking for $f$ which have a versal unfolding of dimension $1$ that is a stable germ, we must have $P(x,y) = x$ or $P(x,y) = x^2 \pm y^2$. Therefore, we have the ${\cal{A}}$-orbits
\begin{align*}
f_{1}(x,y,z) &= (x,y,z^3 + xz) \ \ (\text{Cusp});\\
f_{2}(x,y,z) &= (x,y,z^3 + (x^2 \pm y^2)z) \ \  (\text{Lips} (+) \; / \; \text{Beaks} (-)),
\end{align*}
with $\text{cod}_{\text{e}}\left( {\cal{A}}, f_{1} \right) = 0$ and $\text{cod}_{\text{e}}\left( {\cal{A}}, f_{2} \right) = 1$.
The stable germs $\R^{4},0 \rightarrow \R^4,0$ are, respectively
\begin{align*}
F_{1}(x,y,z,w) &= (x,y,z^3 + xz, w);\\
F_{2}(x,y,z, w) &= (x,y,z^3 + (x^2 \pm y^2)z + wz, w).
\end{align*}
These germs are $\cA$-equivalent, however they are considered separately, because they are versal unfoldings of $f_{1}$ and $f_{2}$, respectively, which are not $\cA$-equivalent.

\item ${\cal{K}}$-orbits of  $A_{3}$ type

In a similar way, the possible normal forms are (see \cite{Marar}, section 1)
\begin{align*}
(x,y, z^4 &+ xz \pm y^kz^2), k \geq 1. \ \  \text{cod}_{\text{e}}\left( {\cal{A}}\right) = k-1; \\
(x,y, z^4 &+ (y^2 \pm x^k)z + xz^2),  k \geq 2. \ \ \text{cod}_{\text{e}}\left( {\cal{A}} \right) = k.		
\end{align*}
Hence,the useful cases are those where $k=1$ or $k=2$ in the first type of orbit, i.e.,
\begin{align*}
f_{1}(x,y,z) &= (x,y, z^4 + xz + yz^2) \ \ \text{(Swallowtail)};\\
f_{2}(x,y,z) &= (x,y, z^4 + xz \pm y^2z^2),
\end{align*}
with $\text{cod}_{\text{e}}\left( {\cal{A}}, f_{1} \right) = 0$ e $\text{cod}_{\text{e}}\left( {\cal{A}}, f_{2} \right) = 1$.
The stable germs $\R^{4},0 \rightarrow \R^4,0$ are, respectively
\begin{align*}
F_{1}(x,y,z,w)&= (x,y, z^4 + xz + yz^2, w)\\
F_{2}(x,y,z,w) &= (x,y, z^4 + xz \pm y^2z^2 + wz^2, w).
\end{align*}

\item ${\cal{K}}$-orbits of $A_{4}$ type 

Via \cite{Marar}, the possible normal forms are
\begin{align*}
(x,y, z^5 &+ xz + yz^2), \ \ \text{cod}_{\text{e}}({\cal{A}}) = 1;\\
(x,y, z^5 &+ xz + y^2z^2 + yz^3), \ \ \text{cod}_{\text{e}}({\cal{A}}) = 2;\\
(x,y, z^5 &+ xz + yz^3), \ \ \text{cod}_{\text{e}}({\cal{A}}) = 3.\\	
\end{align*}
Thus, the only case to be considered is 
\begin{align*}
f(x,y,z) = (x,y, z^5 &+ xz + yz^2),
\end{align*}
whose associated stable germ is
\begin{align*}
F(x,y,z,w) = (x,y, z^5 &+ xz + yz^2 + wz^3, w).
\end{align*}

	\item ${\cal{K}}$-orbits $W_{1}$ and $W_{2}$

The germs
\begin{align*}
F_{1}(x,y,z,w) &= (z, x^2 + y^2 + zx + wy, xy, w);\\
F_{2}(x,y,z,w) &= (z, x^2 - y^2 + zx + wy, xy, w).
\end{align*}
are, respectively, $1$-parameter versal unfoldings of (see \cite{Bruce}, section 3)
\begin{align*}
f_{1}(x,y,z) &= (z, x^2 + y^2 + zx, xy);\\
f_{2}(x,y,z) &=   \left(z, x^2 - y^2 + zx, xy \right),
\end{align*}
where $f_{1}$ and $f_{2}$ are of the type $W_{1}$ e $W_{2}$, respectively and both have $\text{cod}_{\text{e}}(\cA)=1$. Then, we conclude the proof.
\end{enumerate}

\end{proof}

\section{Normal congruences}
In this section, our approach is the same as in \cite{Izumiya} and we seek to provide a classification of the generic singularities of 3-parameter normal congruences in $\mathbb{R}^4$. For this, it is necessary to characterize normal congruences and consider some aspects of Lagrangian singularities.
\begin{defi}\normalfont
	A 3-parameter line congruence $\cal{C}$ = $\lbrace \br(u), \bx(u) \rbrace$, for $u \in U \subset \R^3$, is said to be \textit{normal} if for each point $u_{0} \in U$ there is a neighborhood $\tilde{U}$ of $u_{0}$ and a regular hypersurface, given by $\by(u) = \br(u) + t(u)\bx(u)$, whose normal vectors are parallel to $\bx(u)$, for all $u \in \tilde{U}$. The congruence is an \textit{exact normal} congruence if $\bx(u)$ is a normal vector at $\br(u)$, for all $u \in U$.
\end{defi}

The next proposition characterizes 3-parameter normal line congruences in $\R^4$ and corresponds to the Proposition 5.1 in \cite{Izumiya}.

\begin{prop}\label{propnormal}
		Let $\cal{C}$ = $\lbrace \br(u), \bx(u) \rbrace$, $u\in U \subset \R^3$, be a 3-parameter line congruence in $\R^4$. $\cal{C}$ is normal if, and only if, $h_{ij}(u) = h_{ji}(u)$, $i,j \in \lbrace 1,2,3 \rbrace$, for all   $u \in U$, where $h_{ij} = \left\langle \br_{u_i}, \left(\dfrac{\bx}{\lVert \bx \rVert}\right)_{u_j} \right\rangle$.
\end{prop}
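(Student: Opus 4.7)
The plan is to reduce the normality condition to a Poincar\'e-type integrability condition for a first-order PDE system for the parameter $t(u)$, and then to identify that integrability condition with the symmetry $h_{ij}=h_{ji}$.

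First, I would pass to the unit direction $\bm{n}(u)=\bx(u)/\lVert\bx(u)\rVert$ and absorb the norm into the parameter: writing $\tilde{t}(u)=t(u)\lVert\bx(u)\rVert$, the candidate hypersurface is $\by(u)=\br(u)+\tilde{t}(u)\bm{n}(u)$. The requirement that its normals are parallel to $\bx$ (equivalently to $\bm{n}$) amounts to $\langle\by_{u_i},\bm{n}\rangle=0$ for $i=1,2,3$. Differentiating $\by$ and using $\lVert\bm{n}\rVert^2=1$ (hence $\langle\bm{n}_{u_i},\bm{n}\rangle=0$), this collapses to the system
\begin{align*}
\tilde{t}_{u_i}(u)=-\langle\br_{u_i}(u),\bm{n}(u)\rangle,\qquad i=1,2,3. \qquad (\ast)
\end{align*}
So the congruence is normal near $u_0$ if and only if $(\ast)$ admits a solution $\tilde{t}$ on a neighborhood of $u_0$ for which $\by$ is a regular hypersurface.

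Next I would apply the Poincar\'e lemma to the $1$-form $\omega=-\sum_{i}\langle\br_{u_i},\bm{n}\rangle\,du_i$ on $U$. On a simply connected neighborhood, $(\ast)$ is locally solvable if and only if $d\omega=0$, i.e.\ the coefficients have equal mixed partials. A direct calculation, using $\br_{u_i u_j}=\br_{u_j u_i}$, yields
\begin{align*}
\partial_{u_j}\langle\br_{u_i},\bm{n}\rangle-\partial_{u_i}\langle\br_{u_j},\bm{n}\rangle
=\langle\br_{u_i},\bm{n}_{u_j}\rangle-\langle\br_{u_j},\bm{n}_{u_i}\rangle
=h_{ij}-h_{ji},
\end{align*}
so $d\omega=0$ is exactly $h_{ij}(u)=h_{ji}(u)$ for all $i,j$ and all $u$. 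The \emph{only if} direction of the proposition is then immediate: given a normal $\by$, reverse the derivation to obtain $(\ast)$ and hence $h_{ij}=h_{ji}$.

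For the \emph{if} direction the PDE system $(\ast)$ has a local primitive $\tilde{t}$, unique up to an additive constant. The one technical nuance I expect will be verifying that $\by=\br+\tilde{t}\bm{n}$ can be arranged to be a genuine regular hypersurface rather than only a formal solution of $(\ast)$. I would deal with this by exploiting the gauge freedom $\tilde{t}\mapsto \tilde{t}+c$: for all but a discrete set of shifts $c$, the parallel map $\by_c=\br+(\tilde{t}+c)\bm{n}$ is an immersion (the exceptional $c$ are precisely the focal values appearing in Definition~\ref{def3.2}), and restricting to a smaller neighborhood of $u_0$ if necessary gives the required regular hypersurface whose normals are parallel to $\bx$. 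Setting $t=(\tilde{t}+c)/\lVert\bx\rVert$ recovers the form $\by=\br+t\bx$ required by the definition, completing the equivalence.
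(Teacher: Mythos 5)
Your proposal is correct and follows essentially the same route as the paper: both reduce normality to the integrable PDE system $t_{u_i}=-\langle\br_{u_i},\bx/\lVert\bx\rVert\rangle$, identify the integrability (equality of mixed partials, i.e.\ closedness of the associated $1$-form) with $h_{ij}=h_{ji}$, and then shift the solution by a constant to avoid the focal values and obtain an immersion. The only cosmetic difference is that the paper normalizes $\lVert\bx\rVert=1$ at the outset while you absorb the norm into the rescaled parameter $\tilde t=t\lVert\bx\rVert$.
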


\begin{proof}	
		Let $\cal{C}$ be a normal congruence and $S'$ a hypersurface parameterized locally by $\by(u) = \br(u) + t(u)\bx(u)$, whose normal vectors are parallel to $\bx(u)$. Let us suppose that $\lVert \bx(u) \rVert = 1$. Then, $y_{u_i}(u)$, $i=1,2,3$ are orthogonal to $\bx(u)$, therefore, $\langle \bx, \by_{u_i} \rangle = 0$. From these expressions, we obtain
	\begin{align}\label{eqdif}
	t_{u_i} = -\langle \br_{u_i}, \bx \rangle,\; i=1,2,3.
	\end{align}
	Since $t$ is smooth, $t_{u_1u_2} = t_{u_2u_1}$, $t_{u_1u_3} = t_{u_3u_1}$ and $t_{u_2u_3} = t_{u_3u_2}$. From $t_{u_1u_2} = t_{u_2u_1}$, we obtain
	\begin{align*}
	- \langle \br_{u_1u_2}, \bx  \rangle - \langle \br_{u_1}, \bx_{u_2} \rangle = - \langle \br_{u_1u_2}, \bx  \rangle - \langle \br_{u_2}, \bx_{u_1} \rangle
	\end{align*}
	Therefore, $h_{12} = \langle \br_{u_1}, \bx_{u_2} \rangle = \langle \br_{u_2}, \bx_{u_1} \rangle = h_{21}$. The other cases are analogous.

	Reciprocally, suppose $h_{ij} = h_{ji}$, for $i,j=1,2,3$. Taking into account the differential equations in (\ref{eqdif}),
	it follows from $h_{ij} = h_{ji}$ that $t_{u_1u_2} = t_{u_2u_1}$, $t_{u_1u_3} = t_{u_3u_1}$ and $t_{u_2u_3} = t_{u_3u_2}$. Therefore, the above system has a solution $t$. Write $\by(u) = \br(u) + t(u)\bx(u)$. Note that
	\begin{align*}
	\langle \bx, \by_{u_i} \rangle & = \langle \bx, \br_{u_i} \rangle + t_{u_i}\\
	&= \langle \bx, \br_{u_i} \rangle - \langle \bx, \br_{u_i} \rangle = 0.
	\end{align*}
	If $\by$ is not an immersion, there is a positive real number $\lambda$ such that $\tilde{\by}(u) = \br(u) + (t(u) + \lambda)\bx(u)$ is an immersion. For the last part, it is sufficient to look at the case when $\bm{y}(u)$ belongs to the focal set of the congruence.
	\end{proof}

Denote by
\begin{align*}
Emb(U, \R^4) = \lbrace  \br: U \rightarrow \R^4: \br\; \text{is an embedding}  \rbrace 
\end{align*}
the space of the regular hypersurfaces in $\R^4$ with the Whitney  $C^{\infty}$-topology, and by
\begin{align*}
EN\left(U, \R^4 \times \left(\R^4 \setminus \lbrace \textbf{0} \rbrace \right)\right) = \left\lbrace (\br,\bx): \br \in Emb(U, \R^4),\; \bx(u)\; \text{is normal to}\; \br\; \text{at}\; \br(u) \right\rbrace
\end{align*}
the space of the exact normal congruences. So, we have the following well known theorem.

\begin{teo}\label{teo5.1}
	There is an open dense subset $O \subset Emb(U, \R^4)$, such that the germ of an exact normal congruence $F_{(\br,\bx)}$ at any point $(u_{0}, t_{0}) \in U \times I$ is a
Lagrangian stable map germ for any $\br \in O$, i.e., $ \forall\, \br \in O$, $F_{(\br,\bx)}$ is an immersive germ, or $\cA$-equivalent to one of the normal forms in table (\ref{table1}).
		\begin{table}[!ht]
		\begin{tabular}{|l|l|}
			\hline
			\textbf{Singularity} & \textbf{Normal form}  \\ 	\hline
			Fold                                     &   $(x,y,w, z^2)$           \\	\hline
			Cusp                                   &   $(x,y,w, z^3 + xz)$           \\ 	\hline
			Swallowtail                               &  $(x,y,w,z^4 + xz + yz^2)$            \\ 	\hline
			Butterfly                             &  $(x,y,w,z^5 + xz + yz^2 + wz^3)$            \\ 	\hline
			Elliptic Umbilic                         &      $(z,w, x^2 - y^2 + zx + wy, xy)$        \\ 	\hline
			Hyperbolic Umbilic                     &        $(z,w, x^2 + y^2 + zx + wy, xy)$      \\ 	\hline
			Parabolic Umbilic                       &    $(z,w, xy + xz, x^2 + y^3 + yw  )$ \\ \hline         
		\end{tabular}\caption{Generic singularities of exact normal congruences}\label{table1}
	\end{table}
\end{teo}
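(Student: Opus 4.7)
The plan is to realize the exact normal congruence $F_{(\br,\bx)}$ as the Lagrangian map coming from a natural generating family, and then invoke Arnol'd's classification of generic Lagrangian singularities in dimension $4$. Concretely, I would associate to each embedding $\br:U\to\R^4$ the squared-distance family
\[
\Phi: U\times\R^4\to\R,\qquad \Phi(u,v)=\tfrac12\,\|\br(u)-v\|^2.
\]
For fixed $v$, the critical equations $\partial_{u_i}\Phi=\langle \br_{u_i},\br(u)-v\rangle=0$ say that $v-\br(u)$ is normal to $\br$ at $\br(u)$, i.e.\ $v=\br(u)+t\bx(u)$ for a unique $t$. Thus the critical set of $\Phi$ is naturally parametrized by $(u,t)\in U\times I$ and the Lagrangian projection to $\R^4$ coincides with the exact normal congruence $F_{(\br,\bx)}(u,t)=\br(u)+t\bx(u)$.

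The next step is to verify that $\Phi$ is a Morse family, using the fact that $\br$ is an immersion and that the Hessian of $\Phi$ in the $u$-variables (augmented with the $v$-derivatives) has the matrix $[\br_{u_1}\ \br_{u_2}\ \br_{u_3}]$ as a submatrix of its Jacobian with respect to $(u,v)$. This guarantees that $\Phi$ defines a Lagrangian immersion $L_\Phi$ into $T^*\R^4$ whose caustic is exactly the focal set of $\br$, and that $F_{(\br,\bx)}$ factors through this Lagrangian immersion as its projection to the base.

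Having converted the problem into the classification of Lagrangian map-germs $\R^4\to\R^4$, I would appeal to Arnol'd's theorem: the Lagrangian stable germs in dimension $4$ are classified by the simple $A_k$ and $D_k$ generating-family singularities of Tjurina number $\leq 4$, namely $A_1,A_2,A_3,A_4$ and $D_4^\pm, D_5$. The resulting normal forms for the Lagrangian maps are precisely those listed in Table~\ref{table1} (fold, cusp, swallowtail, butterfly, hyperbolic/elliptic umbilic and parabolic umbilic). All the remaining Lagrangian $\cK$-orbits of generating functions have codimension $\geq 5$ in the jet space, so they can be avoided by transversality for a generic embedding $\br$.

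The final step is the genericity statement. I would apply Theorem~\ref{teoMontaldiadap} to the family $\Phi$ parametrized by $\br\in Emb(U,\R^4)$, playing the role of the locally $\cK$-versal family. Taking $W$ to be the (finite) union of Lagrangian strata of codimension $\geq 5$, one gets that the set of embeddings $\br$ for which $j^k_1\Phi$ misses $W$ (hence for which only the stable $A_{\leq 4}$, $D_4^\pm$, $D_5$ types occur) is residual; openness follows from the openness of Lagrangian stability in $C^\infty(U\times I,\R^4)$, giving the open dense set $O$. The main subtlety I would expect is identifying $\cA$-equivalence of $F_{(\br,\bx)}$ with Lagrangian equivalence of $L_\Phi$: this works here because in the stable range, caustic equivalence and $\cA$-equivalence of the Lagrangian projection coincide, but this identification needs to be stated carefully so that the normal forms in Table~\ref{table1} are the honest $\cA$-normal forms of $F_{(\br,\bx)}$ rather than merely caustic-equivalence representatives.
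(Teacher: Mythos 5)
Your proposal is, in substance, exactly the argument the paper does not write out but delegates to its references (Theorem 5.2 of \cite{Izumiya} and Chapters 4--5 of \cite{Livro}): realize $F_{(\br,\bx)}$ as the Lagrangian map of the distance-squared generating family $\Phi(u,v)=\tfrac12\|\br(u)-v\|^2$, verify the Morse condition from the immersiveness of $\br$, classify the stable Lagrangian germs of $\R^4\to\R^4$, and obtain genericity from Theorem (\ref{teoMontaldiadap}) together with openness of stability; the last worry is also harmless, since Lagrangian equivalence is by definition a restriction of $\cA$-equivalence, so the normal forms of Table (\ref{table1}) are honest $\cA$-normal forms. One small correction: the stable Lagrangian germs in dimension $4$ are governed by generating-family singularities of $\mathcal{R}^{+}$-codimension $\le 4$ (equivalently Milnor number $\le 5$), namely $A_1,\dots,A_5,D_4^{\pm},D_5$ --- not ``Tjurina number $\le 4$'', which would wrongly exclude the butterfly ($A_5$) and the parabolic umbilic ($D_5$) that your own list of normal forms correctly contains.
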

\begin{proof}
	See theorem 5.2 in \cite{Izumiya} or chapters 4 and 5 in \cite{Livro}.	
	\end{proof}

Now, we define a natural projection
$
P: EN\left(U, \R^4 \times \left(\R^4 \setminus \lbrace \textbf{0} \rbrace \right)\right) \rightarrow Emb(U, \R^4),
$
given by $P(\br,\bx) = \br$. Then, we have the following corollary, which provides a classification of the generic singularities of 3-parameter exact normal congruences.
\begin{cor}\label{corolario5.3}
There is an open dense subset $O \subset  EN\left(U, \R^4 \times \left(\R^4 \setminus \lbrace \textbf{0} \rbrace \right)\right)$, such that the germ of an exact normal congruence $F_{(\br,\bx)}$ at any point $(u_{0}, t_{0}) \in U \times I$ is a Lagrangian stable map germ, for all $(\br,\bx) \in O$.	
\end{cor}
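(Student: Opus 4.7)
The plan is to deduce this corollary directly from Theorem \ref{teo5.1} via the natural projection $P(\br,\bx) = \br$. The essential observation is that, for an exact normal congruence, the director field $\bx$ is constrained to lie along the normal direction to $\br$, which is one-dimensional, so $\bx$ is determined by $\br$ up to a nowhere-vanishing smooth scaling; consequently, whether the germ $F_{(\br,\bx)}$ is Lagrangian stable depends only on $\br$.

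First, I would take the open dense subset $O' \subset Emb(U, \R^4)$ supplied by Theorem \ref{teo5.1} and define $O = P^{-1}(O')$. The map $P$ is the restriction of the first-factor projection from the ambient product $C^{\infty}(U,\R^4)\times C^{\infty}(U,\R^4\setminus\{\textbf{0}\})$, so it is continuous in the Whitney $C^{\infty}$-topology, and hence $O$ is open. If $(\br,\bx)\in O$ then $\br\in O'$, so Theorem \ref{teo5.1} gives that the germ of $F_{(\br,\bx)}$ at any point $(u_0,t_0)\in U\times I$ is Lagrangian stable; rescaling $\bx$ by a nowhere-vanishing smooth factor produces an $\cA$-equivalent map via a reparametrization in $t$, so there is no loss of generality when Theorem \ref{teo5.1} is applied with the unit normal.

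The substantive task is to verify density of $O$ in $EN(U,\R^4\times(\R^4\setminus\{\textbf{0}\}))$. Given $(\br,\bx)\in EN$ and a Whitney neighborhood, I would use density of $O'$ in $Emb(U,\R^4)$ to pick $\br'\in O'$ close to $\br$, and then build a normal field $\bx'$ to $\br'$ close to $\bx$. The unit normal $n(\br,u)$ to $\br$ at $\br(u)$ can be written as the normalization of the wedge product $\br_{u_1}\wedge\br_{u_2}\wedge\br_{u_3}$, regarded as a $1$-vector in $\R^4$ via Hodge duality; this is a smooth algebraic function of the first derivatives of $\br$, so the assignment $\br\mapsto n(\br,\cdot)$ is continuous in the $C^{\infty}$-topology. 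Writing $\bx=\lambda\, n(\br,\cdot)$ for a nowhere-vanishing smooth $\lambda$ and setting $\bx'=\lambda\, n(\br',\cdot)$ yields $(\br',\bx')\in EN$ arbitrarily close to $(\br,\bx)$ with $\br'\in O'$, hence inside $O$.

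The main (but essentially routine) technical point is the continuity of $\br\mapsto n(\br,\cdot)$ in the Whitney $C^{\infty}$-topology and the consistent choice of orientation for the unit normal along a perturbation; once these are in hand, both openness of $O$ and the density argument go through without further difficulty.
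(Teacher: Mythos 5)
Your proposal is correct and follows essentially the same route as the paper: the paper defines $O = P^{-1}(O')$ and deduces the corollary from Theorem \ref{teo5.1} together with the assertion that $P$ is an open continuous map. The only difference is that where the paper simply cites openness of $P$, you verify the density of $P^{-1}(O')$ directly by constructing a continuous section $\br \mapsto (\br, \lambda\, n(\br,\cdot))$ through a given exact normal pair, which is a sound (and arguably more complete) way of establishing the same point.
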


\begin{proof}
	It follows from the fact that $
	P: EN\left(U, \R^4 \times \left(\R^4 \setminus \lbrace \textbf{0} \rbrace \right)\right) \rightarrow Emb(U, \R^4)
$
	 is an open continuous map and from theorem (\ref{teo5.1}).
	\end{proof}

Let us consider some aspects of Lagrangian singularities (see chapter 5 in \cite{Livro}). Take the cotangent bundle $\pi: T^{*}\R^4 \rightarrow \R^4$, whose symplectic structure is given locally by the 2-form $\omega = -d\lambda$, where $\lambda$ is the Liouville 1-form, given locally by $\lambda = \sum_{i=1}^{4}p_{i}dz_{i}$, where $(z_{1}, z_{2}, z_{3}, z_{4}, p_{1}, p_{2}, p_{3}, p_{4})$ are the cotangent coordinates. For a given congruence $F_{(\br,\bx)}$, we define a smooth map $L_{(\br,\bx)}: U \times I \rightarrow T^{*}\R^4 \simeq \R^4 \times (\R^4)^{*}$, given by
\begin{align*}
L_{(\br,\bx)}(u,t) = \left( \br(u) + t \dfrac{\bx}{\lVert \bx \rVert}(u), \dfrac{\bx}{\lVert \bx \rVert}(u)  \right).
\end{align*}

\begin{defi}\normalfont
	We say that $F_{(\br, \bx)}$ is a \textit{Lagrangian Line Congruence} if $L_{(\br,\bx)}$ is a Lagrangian immersion.
\end{defi}

\begin{prop}
		Suppose that $L_{(\br, \bx)}$ is an immersion. Then $F_{(\br, \bx)}$ is a Lagrangian congruence if, and only if, is a normal congruence
\end{prop}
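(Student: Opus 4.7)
The plan is to compute $L^{*}\omega$ explicitly in the local coordinates $(u_{1},u_{2},u_{3},t)$ on $U\times I$ and read off the vanishing condition. Since $L_{(\br,\bx)}$ is assumed to be an immersion and $\dim(U\times I)=4=\tfrac{1}{2}\dim T^{*}\R^{4}$, the Lagrangian condition reduces to $L^{*}\omega=0$. As $\omega=-d\lambda$, it suffices to show $d(L^{*}\lambda)=0$ in the relevant direction, and then relate the obstruction to the quantities $h_{ij}-h_{ji}$ appearing in Proposition \ref{propnormal}.

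First, I would set $\bnu(u)=\bx(u)/\lVert\bx(u)\rVert$, so that $L_{(\br,\bx)}(u,t)=(\br(u)+t\bnu(u),\bnu(u))$. Pulling back $\lambda=\sum_{i=1}^{4}p_{i}\,dz_{i}$ gives
\begin{align*}
L^{*}\lambda = \langle \bnu,\, d(\br+t\bnu)\rangle = \langle \bnu,d\br\rangle + t\langle \bnu,d\bnu\rangle + \langle \bnu,\bnu\rangle\, dt .
\end{align*}
Because $\lVert\bnu\rVert\equiv 1$, differentiating gives $\langle\bnu,d\bnu\rangle=0$ and $\langle\bnu,\bnu\rangle=1$, so $L^{*}\lambda=\langle\bnu,d\br\rangle+dt$. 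Taking $d$ of this and noting that $d(dt)=0$, one gets $L^{*}\omega=-d\langle\bnu,d\br\rangle$, which already has no $dt$-component; the entire obstruction lives on $U$.

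Next, I would expand $\langle \bnu,d\br\rangle=\sum_{i}\langle\bnu,\br_{u_{i}}\rangle\,du_{i}$ and differentiate termwise. The contribution involving $\br_{u_{i}u_{j}}$ is symmetric in $(i,j)$ and therefore vanishes after wedging with the antisymmetric $du_{j}\wedge du_{i}$. What survives is
\begin{align*}
d\langle \bnu,d\br\rangle = \sum_{i,j}\langle \br_{u_{i}},\bnu_{u_{j}}\rangle\, du_{j}\wedge du_{i} = \sum_{i<j}(h_{ji}-h_{ij})\,du_{i}\wedge du_{j},
\end{align*}
where $h_{ij}=\langle\br_{u_{i}},\bnu_{u_{j}}\rangle$ is exactly the quantity of Proposition \ref{propnormal}. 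Hence $L^{*}\omega=\sum_{i<j}(h_{ij}-h_{ji})\,du_{i}\wedge du_{j}$.

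The conclusion is then immediate: $L^{*}\omega\equiv 0$ on $U\times I$ if and only if $h_{ij}=h_{ji}$ for all $i,j\in\{1,2,3\}$, and by Proposition \ref{propnormal} this happens precisely when $F_{(\br,\bx)}$ is a normal congruence. The only step I expect to require care is the bookkeeping in the wedge computation—specifically, verifying that the $\langle\bnu,\br_{u_{i}u_{j}}\rangle$ terms cancel by symmetry and that the surviving coefficients assemble into $h_{ij}-h_{ji}$ rather than $h_{ij}+h_{ji}$; everything else follows mechanically from $\|\bnu\|=1$ and the definition of $L_{(\br,\bx)}$.
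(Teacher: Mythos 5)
Your proposal is correct and follows essentially the same route as the paper: pull back the Liouville form $\lambda$ through $L_{(\br,\bx)}$, differentiate, use $\lVert\bx/\lVert\bx\rVert\rVert\equiv 1$ to kill the $dt$-terms, and identify the surviving coefficients of $du_{i}\wedge du_{j}$ with $h_{ij}-h_{ji}$ so that Proposition \ref{propnormal} finishes the argument. The only difference is cosmetic: you discard $t\langle\bnu,d\bnu\rangle$ immediately from the unit-norm condition and work invariantly, whereas the paper carries the coordinate expression through and cancels the resulting $dt\wedge du_{i}$ terms at the end.
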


\begin{proof}
Locally, the Liouville $1$-form of $T^{*}\R^4$ is given by  $\lambda = \sum_{i=1}^{4}p_{i}dz_{i}$. So,
\begin{align*}
L^{*}_{(\br,\bx)}(\lambda) &= \sum_{i=1}^{4}\left( \dfrac{\xi_{i}}{\lVert \bx \rVert}(u)dx_{i}(u)  + t\dfrac{\xi_{i}}{\lVert \bx \rVert}(u) d\dfrac{\xi_{i}}{\lVert \bx \rVert}(u) \right) + dt,
\end{align*}
 Therefore, being $\omega = -d\lambda$, we have
\begin{align*}
-L^{*}_{(\br,\bx)}(\omega) = dL^{*}_{(\br,\bx)}(\lambda) &=  \sum_{i=1}^{4}\left( d\dfrac{\xi_{i}}{\lVert \bx \rVert}(u) \wedge dx_{i}(u)  + \dfrac{\xi_{i}}{\lVert \bx \rVert}(u)dt \wedge d\dfrac{\xi_{i}}{\lVert \bx \rVert}(u) \right) \\
&= \left(  \left\langle \left( \dfrac{\bx}{\lVert \bx \rVert} \right)_{u_1}, \br_{u_2}  \right\rangle -  \left\langle \left( \dfrac{\bx}{\lVert \bx \rVert} \right)_{u_2}, \br_{u_1}  \right\rangle \right)du_1 \wedge du_{2} + \\
&+ \left(  \left\langle \left( \dfrac{\bx}{\lVert \bx \rVert} \right)_{u_1}, \br_{u_3}  \right\rangle -  \left\langle \left( \dfrac{\bx}{\lVert \bx \rVert} \right)_{u_3}, \br_{u_1}  \right\rangle \right)du_{1} \wedge du_{3} +\\
&+ \left(  \left\langle \left( \dfrac{\bx}{\lVert \bx \rVert} \right)_{u_2}, \br_{u_3}  \right\rangle -  \left\langle \left( \dfrac{\bx}{\lVert \bx \rVert} \right)_{u_3}, \br_{u_2}  \right\rangle \right)du_{2} \wedge du_{3} + \\
&+ \sum_{i=1}^{3}\left\langle \dfrac{\bx}{\lVert \bx \rVert},  \left( \dfrac{\bx}{\lVert \bx \rVert} \right)_{u_i} \right\rangle dt \wedge du_{i},
\end{align*}
where $\br(u) = (x_{1}(u), x_{2}(u), x_{3}(u), x_{4}(u))$ e $\bx(u) = (\xi_{1}(u), \xi_{2}(u), \xi_{3}(u), \xi_{4}(u))$. Thus
\begin{align*}
-L^{*}_{(\br,\bx)}(\omega) &= \left(  \left\langle \left( \dfrac{\bx}{\lVert \bx \rVert} \right)_{u_1}, \br_{u_2}  \right\rangle -  \left\langle \left( \dfrac{\bx}{\lVert \bx \rVert} \right)_{u_2}, \br_{u_1}  \right\rangle \right)du_1 \wedge du_{2} + \\
&+ \left(  \left\langle \left( \dfrac{\bx}{\lVert \bx \rVert} \right)_{u_1}, \br_{u_3}  \right\rangle -  \left\langle \left( \dfrac{\bx}{\lVert \bx \rVert} \right)_{u_3}, \br_{u_1}  \right\rangle \right)du_1 \wedge du_{3} +\\
&+ \left(  \left\langle \left( \dfrac{\bx}{\lVert \bx \rVert} \right)_{u_2}, \br_{u_3}  \right\rangle -  \left\langle \left( \dfrac{\bx}{\lVert \bx \rVert} \right)_{u_3}, \br_{u_2}  \right\rangle \right)du_2 \wedge du_{3}.
\end{align*}	
	
	Therefore, $L^{*}_{(x,e)}(\omega) = 0$ if, and only if, 
	\begin{align*}
	h_{21} = \left\langle \left( \dfrac{\bx}{\lVert \bx \rVert} \right)_{u_1}, \br_{u_2}  \right\rangle &= \left\langle \left( \dfrac{\bx}{\lVert \bx \rVert} \right)_{u_2}, \br_{u_1}  \right\rangle  = h_{12}\\
	h_{31} = \left\langle \left( \dfrac{\bx}{\lVert \bx \rVert} \right)_{u_1}, \br_{u_3}  \right\rangle &=  \left\langle \left( \dfrac{\bx}{\lVert \bx \rVert} \right)_{u_3}, \br_{u_1}  \right\rangle = h_{13}\\
	h_{32}=\left\langle \left( \dfrac{\bx}{\lVert \bx \rVert} \right)_{u_2},\br_{u_3}  \right\rangle &=  \left\langle \left( \dfrac{\bx}{\lVert \bx \rVert} \right)_{u_3}, \br_{u_2}  \right\rangle = h_{23}. 
	\end{align*}

\end{proof}

By proposition (\ref{propnormal}), we can regard the space of the Lagrangian congruences as follows. A line congruence $F_{(\br, \bx)}$ is a Lagrangian congruence if, and only if, there is a smooth function $t: U \rightarrow \R$, such that $\br(u) + t(u)\bx(u)$ is an immersion and the following conditions hold
\begin{align}\label{eqnormal}
\begin{cases}
t_{u_1}(u) + \left \langle \dfrac{\bx}{\lVert \bx  \rVert}(u), \br_{u_1}(u) \right \rangle = 0\\
t_{u_2}(u) + \left \langle \dfrac{\bx}{\lVert \bx  \rVert}(u), \br_{u_2}(u) \right \rangle = 0\\
t_{u_3}(u) + \left \langle \dfrac{\bx}{\lVert \bx  \rVert}(u), \br_{u_3}(u) \right \rangle = 0.
\end{cases}
\end{align}
So, we can define the space of the Lagrangian congruences
\begin{align*}
L(U, \R^4 \times \left( \R^4 \setminus \lbrace  \textbf{0} \rbrace \right)) = \left\lbrace (\br,t,\bx): \br(u) + t(u)\bx(u)\; \text{is an immersion and}\; (\ref{eqnormal})\; \text{holds} \right\rbrace
\end{align*}
with the Whitney $C^{\infty}$-topology.
 Our idea now is to show that the generic singularities of normal congruences are the same as the generic singularities of exact normal congruences, so, let us define the map
\begin{align*}
T_{rp}: C^{\infty}(U, \R^4 \times \R \times \left( \R^4 \setminus \lbrace  \textbf{0} \rbrace \right)) &\rightarrow C^{\infty}(U, \R^4 \times \left( \R^4 \setminus \lbrace  \textbf{0} \rbrace \right))\\
(\br(u), t(u), \bx(u)) &\mapsto (\br(u) + t(u)\bx(u), \bx(u)).
\end{align*}
\begin{prop}\label{prop5.3}
 $T_{rp}$ is an open continuous map under the Whitney $C^{\infty}$-topology.
\end{prop}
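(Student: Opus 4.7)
The strategy is to treat continuity and openness separately, reducing the openness statement to the existence of a simple continuous local section of $T_{rp}$.

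\emph{Continuity.} I would first note that $T_{rp}$ is assembled from two elementary operations on smooth maps: pointwise addition $(f,g)\mapsto f+g$ and pointwise multiplication $(t,\bx)\mapsto t\bx$. Both are continuous in the Whitney $C^\infty$-topology, since basic neighbourhoods are controlled by bounds on finitely many derivatives on compacta, and the Leibniz rule estimates the derivatives of a product in terms of the derivatives of the factors. Therefore the first component $(\br,t,\bx)\mapsto \br+t\bx$ is continuous, and the second component is the projection onto $\bx$, which is trivially continuous.

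\emph{Openness via a continuous section.} Let $V$ be open in the domain and take any $(\by_0,\bx_0)\in T_{rp}(V)$; by definition there is a preimage $(\br_0,t_0,\bx_0)\in V$ with $\by_0=\br_0+t_0\bx_0$. Define
\[
\psi: C^{\infty}\!\bigl(U,\R^4\times(\R^4\setminus\{\mathbf{0}\})\bigr)\;\longrightarrow\; C^{\infty}\!\bigl(U,\R^4\times\R\times(\R^4\setminus\{\mathbf{0}\})\bigr),\qquad \psi(\by,\bx)=(\by-t_0\bx,\;t_0,\;\bx).
\]
Since $t_0$ is a fixed smooth function and $\psi$ is built from the same basic operations used above, $\psi$ is continuous. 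Moreover $T_{rp}\circ\psi=\mathrm{id}$, and $\psi(\by_0,\bx_0)=(\br_0,t_0,\bx_0)\in V$. Setting $W=\psi^{-1}(V)$, continuity of $\psi$ makes $W$ an open neighbourhood of $(\by_0,\bx_0)$, and $W=T_{rp}(\psi(W))\subset T_{rp}(V)$. This shows $T_{rp}(V)$ is open.

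\emph{Expected difficulty.} There is essentially no conceptual obstacle here; the only technicality is verifying that the multiplication $(t,\bx)\mapsto t\bx$ is continuous in the Whitney $C^{\infty}$-topology, which is a standard fact that I would invoke from a textbook (e.g.\ \cite{gg}). The role of the section $\psi$ is the main idea: it exhibits $T_{rp}$ as a fibration-like map whose fibres are the affine lines $\{(\by-t\bx,t,\bx):t\in C^{\infty}(U,\R)\}$, and openness of any such map with a continuous section is immediate.
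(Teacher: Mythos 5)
Your argument is correct and complete. The paper itself gives no proof of this proposition, deferring to Proposition 5.6 of \cite{Izumiya} (which treats the analogous map in the 2-parameter setting by essentially the same kind of reasoning), so you have in fact supplied more detail than the source: continuity follows because addition and pointwise multiplication of smooth maps are continuous for the Whitney $C^{\infty}$-topology (this is the standard fact that $C^{\infty}(U,\R)$ is a topological ring in this topology, even though it fails to be a topological vector space over $\R$ for noncompact $U$ -- worth stating precisely, since scalar multiplication by varying reals is the operation that is \emph{not} continuous), and your openness argument via the continuous section $\psi(\by,\bx)=(\by-t_{0}\bx,\,t_{0},\,\bx)$ through an arbitrary preimage point is clean and valid, since $T_{rp}\circ\psi=\mathrm{id}$ forces $\psi^{-1}(V)\subset T_{rp}(V)$. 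The only caution is that this proposition concerns $T_{rp}$ on the full function space; when the paper later applies it to restrictions of $T_{rp}$ to subspaces such as $L(U,\cdot)$ or $BN(U,\cdot)$, openness onto the image with the subspace topology requires the additional (easy) observation that your section $\psi$ respects those subspaces, which your construction does accommodate but which you should make explicit if you intend the proof to cover those uses.
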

\begin{proof}
	See proposition 5.6 in \cite{Izumiya}.
	\end{proof}

Now, take
\begin{align*}
N(U, \R^4 \times \left( \R^4 \setminus \lbrace \textbf{0} \rbrace \right)) = T_{rp}\left( L\left(U, \R^4 \times \left( \R^4 \setminus \lbrace  \textbf{0} \rbrace \right)\right) \right) \subset  C^{\infty}\left(U, \R^4 \times \left( \R^4 \setminus \lbrace  \textbf{0} \rbrace \right)\right),
\end{align*}
with the Whitney $C^{\infty}$-topology induced from $C^{\infty}\left(U, \R^4 \times \left( \R^4 \setminus \lbrace  \textbf{0} \rbrace \right)\right)$. Note that we can regard $N(U, \R^4 \times \left( \R^4 \setminus \lbrace \textbf{0} \rbrace \right))$ as the space of the normal congruences. Then, we have the following theorem.

\begin{teo}\label{teo5.2}
	There is an open dense set $O' \subset N(U, \R^4 \times \left( \R^4 \setminus \lbrace \textbf{0} \rbrace \right)) $, such that the germ of normal congruence $F_{\left(\br,\bx \right)}$ at any point $(u_0, t_0)$ is a Lagrangian stable germ, for any $(\br,\bx) \in O'$.		
\end{teo}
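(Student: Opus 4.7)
The plan is to combine Corollary \ref{corolario5.3} with the openness and continuity of $T_{rp}$ (Proposition \ref{prop5.3}) to lift the genericity statement from exact normal congruences to all normal congruences. First I would set
\begin{align*}
\tilde{O} = \{(\br, \bx) \in C^{\infty}(U, \R^4 \times (\R^4 \setminus \{\mathbf{0}\})) : F_{(\br, \bx)}\; \text{has Lagrangian stable germ at every}\; (u_0, t_0)\}
\end{align*}
and take $O' = \tilde{O} \cap N$. Openness of $O'$ in $N$ is immediate, since Lagrangian stability is an open condition in the Whitney $C^{\infty}$-topology on the ambient space and $N$ carries the induced topology.

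For density, my strategy is to work in $L$ and push forward via $T_{rp}$. Let $L' = T_{rp}^{-1}(\tilde{O}) \cap L$, which is open in $L$ by continuity of $T_{rp}$. Given $(\br_0, t_0, \bx_0) \in L$, its image $T_{rp}(\br_0, t_0, \bx_0) = (\br_0 + t_0\bx_0, \bx_0)$ lies in $EN$, and by Corollary \ref{corolario5.3} there exists $(\br', \bx') \in EN \cap \tilde{O}$ arbitrarily close to it. The triple $(\br' - t_0\bx', t_0, \bx')$ then lies in $L$---because $(\br' - t_0\bx') + t_0\bx' = \br'$ is an embedding, and the constraint (\ref{eqnormal}) holds since $\bx'$ is normal to $\br'$---is close to $(\br_0, t_0, \bx_0)$, and maps under $T_{rp}$ into $\tilde{O}$. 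Hence $L'$ is open dense in $L$.

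Finally, since $T_{rp}: L \to N$ is a continuous open surjection (Proposition \ref{prop5.3}), the image $T_{rp}(L') \subset O'$ is open in $N$ (by openness of $T_{rp}$) and dense in $N$ (continuity together with surjectivity propagates density from $L$ to $N$: the preimage of any nonempty open subset of $N$ is nonempty and open in $L$, hence meets $L'$). Thus $O'$ contains an open dense subset, and is itself open dense in $N$.

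The main obstacle is the lifting step: one must produce a triple in $L$ close to any given one whose image under $T_{rp}$ agrees with a prescribed nearby element of $EN$. The construction $(\br_0, t_0, \bx_0) \mapsto (\br' - t_0\bx', t_0, \bx')$ handles this by exploiting the extra $t$-coordinate in $L$ to absorb the discrepancy between the original reference hypersurface and its perturbed image in $EN$, while leaving the normality constraint automatically satisfied. This is precisely why the detour through $L$ is useful: perturbing directly within the constrained subspace $N$ would force a simultaneous consistent perturbation of $\br$ and $\bx$, whereas in $L$ the perturbation of the reference map is essentially free once the normalized direction $\bx/\|\bx\|$ is chosen normal to the target hypersurface.
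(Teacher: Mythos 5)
Your overall route is the same as the paper's: both arguments obtain $O'$ by pushing the exact-normal result of Corollary \ref{corolario5.3} forward through the open continuous map $T_{rp}$ of Proposition \ref{prop5.3}. The paper in fact does nothing beyond setting $O'=T_{rp}(O)$; you supply the density argument that this elides, and your topological bookkeeping (openness of $O'=\tilde O\cap N$, density of $L'$ in $L$ transported to $N$ through a continuous open surjection) is sound.

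The step that does not hold as written is the claim that the lifted triple $(\br'-t_0\bx',\,t_0,\,\bx')$ lies in $L$. Condition (\ref{eqnormal}) is imposed on the new reference map $\br''=\br'-t_0\bx'$, not on $\br'$, and since $\br''_{u_i}=\br'_{u_i}-t_{0,u_i}\bx'-t_0\bx'_{u_i}$ while $\langle\bx',\br'_{u_i}\rangle=0$, one gets
\begin{align*}
t_{0,u_i}+\left\langle \frac{\bx'}{\lVert\bx'\rVert},\,\br''_{u_i}\right\rangle
= t_{0,u_i}\left(1-\lVert\bx'\rVert\right)-t_0\,\partial_{u_i}\lVert\bx'\rVert .
\end{align*}
This vanishes for all $i$ only when $\lVert\bx'\rVert\equiv1$; a pair $(\br',\bx')\in EN$ produced by Corollary \ref{corolario5.3} has $\bx'$ normal but not unit, so normality of $\bx'$ to $\br'$ alone does not give (\ref{eqnormal}) for the shifted reference map, and the lift may fall outside $L$. (The same normalization looseness is already present in the paper's definitions --- e.g.\ $T_{rp}(L)\subset EN$ also uses $\lVert\bx\rVert\equiv1$ implicitly.) The repair is routine: normalize the director field before lifting, replacing $\bx'$ by $\bx'/\lVert\bx'\rVert$ and rescaling $t_0$ accordingly; this changes $F_{(\br',\bx')}$ only by the source reparametrization $(u,t)\mapsto(u,\lVert\bx'(u)\rVert\,t)$, hence preserves the (Lagrangian) stability of all its germs, and with unit $\bx'$ the displayed expression reduces to $0$. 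With that adjustment your lifting construction, and the rest of the argument, goes through.
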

\begin{proof}
		From  Corollary (\ref{corolario5.3}), there is an open dense subset $O \subset EN\left(U, \R^4 \times \left(\R^4 \setminus \lbrace \textbf{0} \rbrace \right)\right)$, such that the germ of exact normal congruence $F_{(\br,\bx)}$  is a Lagrangian stable germ for all $(\br,\bx) \in O$ at any point $(u_0, t_0) \in U \times I$. As we know, $T_{rp}$ is an open map, so we just need to take $O' = T_{rp}(O)$.
\end{proof}

\section{Blaschke normal congruences}
In this section we deal with one of the most important classes of equiaffine line congruences, which is the class of Blaschke normal congruences. Our goal is to provide a positive answer to the following conjecture, presented in \cite{Izumiya}:
\begin{conjecture}
	Germs of generic Blaschke affine normal congruences at any point are Lagrangian
	stable.
\end{conjecture}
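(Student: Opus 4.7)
The plan is to realize germs of Blaschke normal congruences as Lagrangian projections of Morse families of functions and then invoke Theorem \ref{teoMontaldiadap} together with the classification of Lagrangian stable singularities for Lagrangian maps in dimension $4$. For a non-degenerate hypersurface $\br: U \to \R^4$, I would first recall the construction of the Blaschke affine normal field $\bnu(u)$, obtained from any initial transversal vector field by adjusting for the apolarity and volume-normalization conditions with respect to the affine fundamental form; the associated Blaschke normal congruence is $F_{(\br,\bnu)}(u,t) = \br(u) + t\bnu(u)$, and the corresponding map $L_{(\br,\bnu)}: U \times I \to T^{*}\R^4$ of Section $5$ is then a Lagrangian immersion outside its singular set.

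Next, I would introduce the family of affine support functions
\[
\Phi: U \times \R^4 \to \R, \qquad \Phi(u, v) = \langle v - \br(u),\, \bnu(u) \rangle,
\]
(up to a suitable scalar normalization), and verify that $\Phi$ is a \emph{Morse family of functions} in the sense of Section~$2$. This is the announced content of the section: one computes $\Delta_\Phi = (\Phi_{u_1}, \Phi_{u_2}, \Phi_{u_3})$ and checks that its derivative with respect to $(u,v)$ has maximal rank $3$ along $\Delta_\Phi^{-1}(\bm{0})$; the crucial input is the linear independence of $\lbrace \br_{u_1}, \br_{u_2}, \br_{u_3}, \bnu \rbrace$ at every $u \in U$, which is precisely the non-degeneracy hypothesis on $\br$. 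Once $\Phi$ is shown to be a Morse family, the standard generating-family formalism identifies the Lagrangian projection of the critical set $C(\Phi)$ with the Blaschke congruence $F_{(\br,\bnu)}$ up to $\cA$-equivalence, giving $F_{(\br,\bnu)} \isEquivTo{\cA} \pi \circ \cL(\Phi)$.

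Third, the generic Lagrangian stable singularities of Lagrangian maps in dimension $4$ are classified by the $\mathcal{R}^{+}$-versal unfoldings of function germs of $\mathcal{R}^{+}_{e}$-codimension at most $3$, yielding exactly the seven normal forms listed in Table \ref{table1} (see Chapters 4--5 of \cite{Livro}). For each corresponding $\mathcal{R}^{+}$-invariant stratum $W \subset J^k(U \times \R^4, \R)$, I would apply Theorem \ref{teoMontaldiadap} with $\cB$ an appropriate space of non-degenerate embeddings of $U$ and with the family $\Phi$ above, obtaining a residual subset of $\br$'s for which $j^{k}_{1}\Phi$ is transverse to every such stratum; combining this with the generating-family criterion for Lagrangian stability yields Lagrangian stability of every germ of $F_{(\br,\bnu)}$ on an open dense set of embeddings. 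Openness is then inherited from the openness of Lagrangian stability in the Whitney $C^{\infty}$-topology and the continuity of the assignment $\br \mapsto (\br,\bnu)$, exactly as in the proof of Theorem \ref{teo5.2}.

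The main obstacle I expect is the pair of technical verifications hidden in the two preceding steps. First, showing that $\Phi$ is a Morse family at \emph{every} point of the congruence, including focal points where the critical set may degenerate, requires a careful local expression of $\bnu$, which depends non-linearly on the $3$-jet of $\br$ through the normalization $\det[\br_{u_1},\br_{u_2},\br_{u_3},\bnu]=1$ and the apolarity condition. Second, applying Theorem \ref{teoMontaldiadap} in this setting is not a mere application of Thom transversality in $\br$, since $\Phi$ depends non-linearly on $\br$: one must show that the induced assignment $\br \mapsto j^{k}_{1}\Phi$ is itself locally surjective modulo the tangent spaces of the strata $W$, i.e.\ that perturbations of the embedding $\br$ produce perturbations of $\Phi$ which are sufficiently generic. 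This is the Blaschke counterpart of Lemma \ref{Lema4.1}, and it constitutes the substantive computational content of the proof of the conjecture.
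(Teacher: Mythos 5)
Your overall route coincides with the paper's: the affine support function $\rho_p(u)=\langle p-\br(u),\bnu(u)\rangle$ as a Morse (generating) family whose associated Lagrangian map is $F_{(\br,\bx)}$, genericity via Theorem \ref{teoMontaldiadap}, the classification of Lagrangian stable germs in dimension $4$ (Table \ref{table1}), and the open continuous map $T_{rp}$ to pass from Blaschke \emph{exact} normal congruences to general Blaschke normal congruences. There is, however, a genuine gap in your Morse-family step. You claim the crucial input is the linear independence of $\lbrace \br_{u_1},\br_{u_2},\br_{u_3},\bnu\rbrace$, ``which is precisely the non-degeneracy hypothesis.'' Neither half of this is right: that frame is linearly independent for \emph{any} transversal field (it encodes transversality, not non-degeneracy, which is the non-degeneracy of the affine fundamental form $h$, i.e.\ nonvanishing Gaussian curvature), and independence of the frame does not yield the maximal-rank condition on $\Delta\rho$. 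Indeed, the left $3\times 3$ block of $J(\Delta\rho)$ is the Hessian of $\rho_{p_0}$, which is exactly degenerate at focal points --- the only points where there is anything to prove --- so the rank must be recovered from the block of derivatives in $p$, which involves only $\bx$ and its derivatives. The paper's argument goes through Cecil's identity $H(X,Y)=h(X,(I-\rho_{p}S)Y)$, the equiaffine relation $\bx_{u_i}=-S(\br_{u_i})$, and a case analysis on $\rank\,\mathrm{Hess}(\rho_{p_0})(u_0)\in\lbrace 0,1,2\rbrace$, reducing in the end to the linear independence of $\lbrace \bx_{u_1},\bx_{u_2},\bx_{u_3}\rbrace$ (invertibility of the shape operator), which is where non-degeneracy actually enters. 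As sketched, your verification of the Morse-family property would not go through.

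Concerning your second announced ``obstacle'': the non-linear dependence of $\rho$ on $\br$ is precisely what the composite-map formulation of Theorem \ref{teoMontaldiadap} is designed to absorb, and no Blaschke analogue of Lemma \ref{Lema4.1} is computed in the paper. One writes $\rho=H\circ\left(g,I_{d}\big|_{\R^4}\right)$ with $H(A,B,C)=\langle B, C-A\rangle$ a \emph{fixed} family, submersive for each fixed $C$ (hence locally versal), and $g=(\br,\bnu)$ ranging over immersions into $\R^4\times(\R^4\setminus\lbrace\bm{0}\rbrace)$ after identifying $Emb_{ng}(U,\R^4)$ with the conormal sections $S_{con}$ via $\bnu=|K|^{-1/4}N$ (Remark \ref{observ5.3}); Montaldi's theorem then delivers the residual set directly. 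To be fair, the paper's Proposition \ref{Prop5.4} is itself terse about why genericity within the constrained class $S_{con}$ suffices, so your instinct that something needs checking here is not unreasonable --- but the intended resolution is the composite trick, not a surjectivity computation in $\br$.
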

Taking this into account, let us regard $\R^4$ as a four-dimensional affine space with volume element given by $\omega(e_{1}, e_{2}, e_{3}, e_{4}) = \det(e_{1}, e_{2}, e_{3}, e_{4})$, where $\lbrace e_{1}, e_{2}, e_{3}, e_{4}  \rbrace$ is the standard basis of $\R^4$. Let $D$ be the standard flat connection on $\R^4$, thus $\omega$ is a parallel volume element. Let $\br: U \rightarrow \R^4$ be a regular hypersurface with $\br(U) = M$ and $\bx: U \rightarrow \R^4 \setminus \lbrace \bm{0} \rbrace$ a vector field which is transversal to $M$. Thus, decompose the tangent space 
\begin{align*}
T_{p}\R^4 = T_{p}M \oplus \langle \bx(u) \rangle_{\R},
\end{align*}
where $\br(u) = p$. So, it follows that given $X$ and $Y$ vector fields on $M$, we have the decomposition
\begin{align*}
D_{X}Y = \nabla_{X}Y + h(X,Y)\bx,
\end{align*}
where $\nabla$ is the \textit{induced affine connection} and $h$ is the \textit{affine fundamental form} induced by $\bx$, which defines a symmetric bilinear form on each tangent space of $M$. We say that $M$ is \textit{non-degenerate} if $h$ is non-degenerate which is equivalent to say that the Gaussian curvature of $M$ never vanishes (see chapter 3 in \cite{geomAfim}). Using the same idea, we decompose
\begin{align*}
D_{X}\bx = -S(X) + \tau(X)\bx,
\end{align*}
where $S$ is the \textit{shape operator} and $\tau$ is the \textit{transversal connection form}. We say that $\bx$ is an \textit{equiaffine transversal vector field} if $\tau =0$, i.e $D_{X}\bx$ is tangent to $M$.

Using the volume element $\omega$ and the transversal vector field $\bx$, we induce a volume element $\theta $ on $M$ as follows
\begin{align*}
\theta(X, Y,Z) = \omega(X,Y,Z, \bx),
\end{align*}
where $X$, $Y$ and $Z$ are tangent to $M$.

Given a non-degenerate hypersurface $\br: U \rightarrow \R^4$ and a vector field $\bx: U \rightarrow \R^4 \setminus \lbrace \bm{0} \rbrace$ which is transversal to $M = \br(U)$, we take the line congruence generated by $(\br, \bx)$ and the map
\begin{align*}
F_{(\br, \bx)}: U \times I &\rightarrow \R^4\\
(u,t) &\mapsto \br(u) + t\bx(u),
\end{align*}
where $I$ is an open interval.
\begin{defi}\normalfont
	A point $p = F(u,t)$ is called a focal point of multiplicity $m>0$ if the differential $dF$ has nullity $m$ at $(u,t)$.
\end{defi}
The next proposition relates the shape operator $S$ and the above definition.
\begin{prop}{\rm{(\cite{cecil}, Proposition 1)}}
	Let $\br: U \rightarrow \R^4$ be a non-degenerate hypersurface with transversal equiaffine vector field $\bx$. Let $S$ be the shape operator related to $M$ and $\bx$. A point $p = F(u,t)$ is a focal point of $M$ of
multiplicity $m>0$ if and only if $1/t$ is an eigenvalue of $S$ with eigenspace of
	dimension $m$ at $u$.
\end{prop}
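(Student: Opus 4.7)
The plan is to compute the differential $dF$ explicitly at $(u,t)$ and exploit the equiaffine hypothesis to rewrite the derivatives of $\bx$ in terms of the shape operator, after which the claim becomes a direct linear algebra statement about $I - tS$.

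First I would write $dF$ in the coordinate frame $(\partial_{u_1},\partial_{u_2},\partial_{u_3},\partial_t)$. Since $F(u,t)=\br(u)+t\bx(u)$, one has $dF(\partial_{u_i})=\br_{u_i}+t\bx_{u_i}$ and $dF(\partial_t)=\bx$. The decisive observation is that, because $\bx$ is equiaffine (so $\tau\equiv 0$), the decomposition $D_X\bx=-S(X)+\tau(X)\bx$ reduces to $D_X\bx=-S(X)$ for every $X$ tangent to $M$. Applied with $X=\br_{u_i}$ and using that $D$ is the flat connection, this gives the key identity $\bx_{u_i}=-S(\br_{u_i})$, so in particular each $\bx_{u_i}$ lies in $T_pM$. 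Consequently
\begin{align*}
dF(\partial_{u_i})=(I-tS)(\br_{u_i})\in T_pM,\qquad dF(\partial_t)=\bx.
\end{align*}

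Next I would analyze $\ker dF$ using the direct sum $T_p\R^4=T_pM\oplus \langle\bx\rangle_{\R}$. A vector $\sum a_i\partial_{u_i}+b\,\partial_t$ lies in $\ker dF$ iff $(I-tS)\bigl(\sum a_i\br_{u_i}\bigr)+b\bx=0$. The first summand is tangent to $M$ while $\bx$ is transversal, so necessarily $b=0$ and $(I-tS)(v)=0$ for $v=\sum a_i\br_{u_i}\in T_pM$. Since $\br$ is an immersion (non-degeneracy implies regularity), the map $a\mapsto \sum a_i\br_{u_i}$ is an isomorphism $\R^3\to T_pM$, so
\begin{align*}
\dim\ker dF_{(u,t)}=\dim\ker\bigl((I-tS)|_{T_pM}\bigr).
\end{align*}
For $t\neq 0$, $(I-tS)v=0$ is equivalent to $S(v)=\tfrac{1}{t}v$, so the nullity of $dF$ equals the dimension of the eigenspace of $S$ associated with the eigenvalue $1/t$. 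For $t=0$, $dF$ is non-singular (rank $4$), giving no focal point; this is consistent with $m>0$ forcing $t\neq 0$.

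There is no serious obstacle once the equiaffine identity $\bx_{u_i}=-S(\br_{u_i})$ is isolated; the only point needing care is the direct-sum argument that lets one kill the $\partial_t$ component of any kernel vector (using transversality of $\bx$), and the verification that the passage from $(I-tS)$ having nullity $m$ to $1/t$ being an eigenvalue of $S$ with eigenspace of dimension $m$ is legitimate precisely when $t\neq 0$, which is automatic under the hypothesis $m>0$.
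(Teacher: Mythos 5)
Your proof is correct. The paper itself does not supply an argument for this proposition (it is quoted from Cecil's paper, Proposition 1), but the route you take — using the equiaffine hypothesis to get $\bx_{u_i}=-S(\br_{u_i})$, hence $dF(\partial_{u_i})=(I-tS)(\br_{u_i})\in T_pM$, and then splitting off the transversal direction $\bx$ to identify the nullity of $dF$ with that of $(I-tS)|_{T_pM}$ — is exactly the standard (and intended) one, and all the steps, including the reduction to the eigenvalue $1/t$ for $t\neq 0$, check out.
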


For each $u \in U$ and $p \in \R^4$, we decompose $p-\br(u)$ into tangential
and transversal components as follows
\begin{align}\label{funcaosuporte}
p - \br(u) = v(u) + \rho_{p}(u)\bx(u),
\end{align}
where $v(u) \in T_{\br(u)}M$. The real function $\rho_{p}$ is called an \textit{affine support function} associated to $M$ and $\bx$. If we ­fix an Euclidean inner product $\langle .\,, . \rangle$ in $\R^4$, the support function is given by

\begin{align}\label{funcsup}
\rho_{p}(u) = \left \langle p - \br(u), \dfrac{\bx}{\lVert \bx \rVert^2}(u) \right\rangle - \left\langle v(u), \dfrac{\bx}{\lVert \bx \rVert^2}(u) \right\rangle,
\end{align}
thus
\begin{align*}
 \dfrac{\partial \rho}{\partial p_{i}}(u) &= \dfrac{\bx_{i}}{\lVert \bx \rVert^{2}}(u)\label{eq300}.
\end{align*}

\begin{prop}{\rm{(\cite{cecil}, Proposition 2)}}\label{prop2sec2}
	Let $\br: U \rightarrow \R^4$ be a non-degenerate hypersurface and $\bx$ an equiaffine transversal vector field. Then
	\begin{enumerate}[(a)]
		\item \label{itema} The affine support function $\rho_{p}$ has a critical point at $u$ if and only if $p-\br(u)$ is a multiple of $\bx(u)$.
		\item If $u$ is a critical point of $\rho_{p}$, then the Hessian of $\rho_{p}$ at u has the form
			\begin{align*}
		H(X,Y) = h(X, (I - \rho_{p}(u)S)Y), \; X,\, Y \in T_{\br(u)}M.
		\end{align*}
		\item A critical point $u$ of the function $\rho_{p}$ is degenerate if and only if $p$ is a focal
		point of $M$.
	\end{enumerate}
\end{prop}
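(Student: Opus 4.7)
The plan is to differentiate the defining identity (\ref{funcaosuporte}), namely $p - \br(u) = v(u) + \rho_{p}(u)\bx(u)$, along each coordinate $u_i$, and then split the resulting expression into its tangential and transversal components with respect to the decomposition $T_{\br(u)}\R^4 = T_{\br(u)}M \oplus \langle\bx(u)\rangle_\R$. Because $\bx$ is equiaffine ($\tau \equiv 0$) one has $D_{\br_{u_i}}\bx = -S(\br_{u_i})$, and the tangent vector field $v$ decomposes as $D_{\br_{u_i}}v = \nabla_{\br_{u_i}}v + h(\br_{u_i},v)\bx$. Applying $D_{\br_{u_i}}$ to both sides of (\ref{funcaosuporte}) therefore yields
\begin{align*}
-\br_{u_i} \;=\; \nabla_{\br_{u_i}}v \,-\, \rho_{p}(u)\,S(\br_{u_i}) \,+\, \bigl((\rho_{p})_{u_i} + h(\br_{u_i},v)\bigr)\bx.
\end{align*}
Comparing transversal components gives the key identity $(\rho_{p})_{u_i} = -h(\br_{u_i},v)$, while comparing tangential components gives $\nabla_{\br_{u_i}}v = -(I - \rho_{p}(u)S)\br_{u_i}$.

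Part (a) then follows immediately: since $h$ is non-degenerate and $\{\br_{u_1},\br_{u_2},\br_{u_3}\}$ spans $T_{\br(u)}M$, the vanishing of all $(\rho_p)_{u_i}$ is equivalent to $v(u)=0$, i.e. to $p - \br(u)$ being a scalar multiple of $\bx(u)$. For part (b) I would differentiate the identity $(\rho_p)_{u_i} = -h(\br_{u_i},v)$ once more in $u_j$ and evaluate at the critical point $u$. The Leibniz expansion produces three terms; two of them carry a factor of $v(u)$ and so vanish at the critical point, leaving $(\rho_p)_{u_iu_j}(u) = -h(\br_{u_i},\nabla_{\br_{u_j}}v)$. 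Substituting the tangential formula $\nabla_{\br_{u_j}}v = -(I-\rho_p(u)S)\br_{u_j}$ yields $H(\br_{u_i},\br_{u_j}) = h(\br_{u_i},(I-\rho_p(u)S)\br_{u_j})$, which extends to arbitrary $X,Y$ by bilinearity.

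Finally, part (c) is a direct consequence of (b): degeneracy of $H$ means there exists $Y \neq 0$ with $h(X,(I-\rho_p(u)S)Y)=0$ for all $X \in T_{\br(u)}M$, which by non-degeneracy of $h$ is equivalent to $(I-\rho_p(u)S)Y=0$, i.e. $1/\rho_p(u)$ is an eigenvalue of the shape operator $S$ at $u$. By the preceding proposition this is precisely the condition that the point $p = \br(u)+\rho_p(u)\bx(u)$ is a focal point of $M$. The only real obstacle in this argument is the bookkeeping in the first differentiation: one must invoke the equiaffine hypothesis so that no term $\tau(\br_{u_i})\rho_p\bx$ appears in $D_{\br_{u_i}}\bx$ and corrupts the transversal identity $(\rho_p)_{u_i} = -h(\br_{u_i},v)$, on which both the Hessian formula and the focal-point characterization rest.
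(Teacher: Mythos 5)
Your argument is correct. Note that the paper does not actually prove this proposition --- it is quoted from Cecil's paper (Proposition 2 there) --- and your derivation, differentiating the decomposition $p-\br(u)=v(u)+\rho_{p}(u)\bx(u)$ with the flat connection $D$ and splitting the result via the Gauss and Weingarten formulas (with $\tau=0$ killing the extra transversal term), is precisely the standard argument from that source; the two identities $(\rho_p)_{u_i}=-h(\br_{u_i},v)$ and $\nabla_{\br_{u_j}}v=-(I-\rho_p S)\br_{u_j}$ do all the work, and your handling of the Leibniz terms vanishing at $v=0$ is right. The only point worth a word is in (c): passing from $(I-\rho_{p}(u)S)Y=0$ to ``$1/\rho_{p}(u)$ is an eigenvalue of $S$'' tacitly assumes $\rho_{p}(u)\neq 0$, but this is automatic, since $\rho_{p}(u)=0$ makes $I-\rho_{p}(u)S$ the identity and hence non-degenerate, matching the fact that $p=\br(u)$ is not a focal point.
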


\begin{obs}\label{obsconjcat}
It follows from item \ref{itema} that the catastrophe set of $\rho$, which is also called the \textit{Criminant set} of $\rho$, is
	\begin{align*}
	C_{\rho} = \lbrace (u,p): p = \br(u) + t\bx(u),\, for\, some\, t \in \R \rbrace.
	\end{align*}

\end{obs}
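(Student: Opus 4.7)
The plan is to unwind the definition of the catastrophe (Criminant) set and immediately apply item \ref{itema} of Proposition \ref{prop2sec2}. Recall that for a family of smooth functions $\rho: U \times \R^4 \to \R$, written here as $\rho(u,p) = \rho_p(u)$ and having $U$ as the ``state'' variables and $p$ as the ``parameter'' variables, the catastrophe set is by definition
\begin{align*}
C_{\rho} = \left\lbrace (u,p) \in U \times \R^4 : \frac{\partial \rho_p}{\partial u_i}(u) = 0,\; i=1,2,3 \right\rbrace,
\end{align*}
i.e.\ the set of pairs $(u,p)$ for which $u$ is a critical point of the support function $\rho_p$.

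Next, I would invoke Proposition \ref{prop2sec2}\ref{itema} directly: it asserts that $u$ is a critical point of $\rho_p$ if and only if $p - \br(u)$ is a scalar multiple of the transversal vector $\bx(u)$. Thus $(u,p) \in C_{\rho}$ precisely when there exists some real number $t \in \R$ such that
\begin{align*}
p - \br(u) = t\,\bx(u),
\end{align*}
which, rewritten, says $p = \br(u) + t\bx(u)$ for some $t \in \R$. This immediately yields the inclusion $C_\rho \subseteq \lbrace (u,p): p = \br(u) + t\bx(u),\, \text{for some } t \in \R \rbrace$, and the converse inclusion follows from the same equivalence used in the other direction (given such a $t$, $p-\br(u)$ is a multiple of $\bx(u)$, so $u$ is critical for $\rho_p$).

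No genuine obstacle should appear here, since the content is really just a restatement of item \ref{itema} in the language of catastrophe sets; the only thing worth a line of commentary is that in the decomposition (\ref{funcaosuporte}) the condition ``$p-\br(u)$ is a multiple of $\bx(u)$'' is the same as ``the tangential component $v(u)$ vanishes,'' which is what makes the $t$ appearing in the description of $C_\rho$ coincide with the value $\rho_p(u)$ itself. Consequently, the catastrophe set $C_\rho$ is exactly the image of the congruence map $F_{(\br,\bx)}: U \times \R \to \R^4$, viewed as a subset of $U \times \R^4$ via its graph-like description, completing the proof.
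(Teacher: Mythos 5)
Your proposal is correct and is exactly the argument the paper intends: the remark is a direct restatement of Proposition \ref{prop2sec2}\ref{itema}, identifying the critical-point condition defining the catastrophe set with the condition that $p-\br(u)$ is a multiple of $\bx(u)$, i.e.\ $p=\br(u)+t\bx(u)$ for some $t\in\R$. Your additional observation that the tangential component $v(u)$ vanishes on $C_\rho$, so that $t=\rho_p(u)$, is a correct and harmless refinement consistent with the paper's later use of the remark.
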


\begin{prop}
		Let $\br: U \rightarrow \R^4$ be a non-degenerate hypersurface with transversal equiaffine vector field $\bx$. Then the family of germs of  functions $\rho: \left(U \times \R^4, (u_{0}, p_{0})\right) \rightarrow (\R,t_{0})$, where $t_{0} = \rho(u_{0}, p_{0})$ and $u_{0}$ is a critical point of $\rho_{p_{0}}$ is a Morse family of functions.
\end{prop}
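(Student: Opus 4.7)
The plan is to verify the submersion condition directly from the definition: one must show that
\[
\Delta_\rho:(U\times\R^4,(u_0,p_0))\longrightarrow(\R^3,\bm{0}),\qquad \Delta_\rho(u,p)=\bigl(\rho_{u_1},\rho_{u_2},\rho_{u_3}\bigr)(u,p),
\]
has Jacobian of rank $3$ at $(u_0,p_0)$. Its $3\times 7$ Jacobian splits into the $3\times 3$ Hessian $\bigl[\partial^2\rho/\partial u_i\partial u_j\bigr]$ and the $3\times 4$ mixed block
\[
M\;=\;\Bigl[\partial^2\rho/\partial u_j\partial p_k\Bigr]_{\substack{j=1,2,3\\ k=1,\dots,4}}.
\]
Since the Hessian can degenerate precisely at focal points (by Proposition \ref{prop2sec2}(b)), the cleanest route is to prove the stronger statement that the block $M$ by itself already has rank $3$; because $M$ does not depend on $p$, this gives a uniform (and $p_0$-independent) argument.

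Using the formula $\partial\rho/\partial p_k=\xi_k/\|\bx\|^2$ recorded immediately after (\ref{funcsup}), the $j$-th row of $M$ is the four-vector $(\bx/\|\bx\|^2)_{u_j}\in\R^4$. To show these three rows are linearly independent at $u_0$, I would pair them against the tangent basis $\{\br_{u_1}(u_0),\br_{u_2}(u_0),\br_{u_3}(u_0)\}$ of $T_{\br(u_0)}M$. The equiaffine hypothesis $\tau\equiv 0$ gives $\bx_{u_j}=D_{\br_{u_j}}\bx=-S(\br_{u_j})\in T_{\br(u_0)}M$, and differentiating $\bx/\|\bx\|^2$ yields the decomposition
\[
\Bigl(\tfrac{\bx}{\|\bx\|^2}\Bigr)_{u_j}\;=\;\tfrac{1}{\|\bx\|^2}\bx_{u_j}\;-\;\tfrac{2\langle\bx,\bx_{u_j}\rangle}{\|\bx\|^4}\bx
\]
adapted to the transversal splitting $\R^4=T_{\br(u_0)}M\oplus\langle\bx(u_0)\rangle$. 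Using this splitting together with the defining relation $\br_{u_j u_k}=\nabla_{\br_{u_j}}\br_{u_k}+h_{jk}\bx$, one identifies the $3\times 3$ Gram matrix $G=\bigl[\langle(\bx/\|\bx\|^2)_{u_j},\br_{u_k}\rangle\bigr]_{j,k}$ with a nonzero scalar multiple of $[h_{jk}]$, up to terms that do not obstruct its rank. Non-degeneracy of $h$ then forces $G$ to be non-singular, so no non-trivial linear combination of the rows of $M$ can pair trivially with $\br_{u_1},\br_{u_2},\br_{u_3}$, whence $M$ has rank $3$.

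The main obstacle is this Gram-matrix computation: extracting the matrix $[h_{jk}]$ of the affine fundamental form from the expression $\bigl[\langle(\bx/\|\bx\|^2)_{u_j},\br_{u_k}\rangle\bigr]$ requires a careful use of both the equiaffine condition $\tau=0$ and the transversal decomposition $D_XY=\nabla_XY+h(X,Y)\bx$, so that the relevant cross terms collapse along the $\bx$ direction. Once this identification is in hand, non-degeneracy of the hypersurface $M$ immediately yields the rank of $M$, hence the surjectivity of $d\Delta_\rho(u_0,p_0)$, and $\rho$ is a Morse family of functions.
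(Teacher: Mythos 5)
Your plan hinges on the claim that the mixed block $M$, with rows $\bigl(\bx/\lVert\bx\rVert^{2}\bigr)_{u_j}$, already has rank $3$ on its own, which you try to justify by identifying the Gram matrix $\bigl[\langle(\bx/\lVert\bx\rVert^{2})_{u_j},\br_{u_k}\rangle\bigr]$ with a nonzero multiple of $[h_{jk}]$. Both claims are false. Since $\bx$ is equiaffine, $\bx_{u_j}=-S(\br_{u_j})$, so
\[
\Bigl\langle\bigl(\tfrac{\bx}{\lVert\bx\rVert^{2}}\bigr)_{u_j},\br_{u_k}\Bigr\rangle
=-\tfrac{1}{\lVert\bx\rVert^{2}}\langle S(\br_{u_j}),\br_{u_k}\rangle-\tfrac{2\langle\bx,\bx_{u_j}\rangle}{\lVert\bx\rVert^{4}}\langle\bx,\br_{u_k}\rangle ,
\]
which is a Euclidean pairing built from the \emph{affine shape operator} $S$, not the affine fundamental form $h$. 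Non-degeneracy of the hypersurface constrains $h$, not $S$: for the elliptic paraboloid $x_4=\tfrac12(x_1^2+x_2^2+x_3^2)$ with its Blaschke normal $\bx=(0,0,0,1)$ (an improper affine sphere), one has $S\equiv 0$, all $\bx_{u_j}=0$, hence $M\equiv 0$ and your Gram matrix vanishes identically. So no argument can extract rank $3$ from $M$ alone; the "terms that do not obstruct its rank" that you wave away are exactly where the identification breaks.

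The proposition is still true in that example because the Hessian block $H(X,Y)=h(X,(I-\rho_{p}S)Y)$ is then non-degenerate, and this points to why the two blocks cannot be decoupled: whenever the Hessian drops rank, Proposition \ref{prop2sec2} forces $1/\rho_{p_0}$ to be an eigenvalue of $S$ with a correspondingly large eigenspace, so $S$ (hence $M$) picks up precisely the rank the Hessian loses. The paper's proof is a case analysis on $\rank\mathrm{Hess}(\rho_{p_0})(u_0)\in\{0,1,2\}$ exploiting this trade-off, and some version of that interplay is unavoidable for your choice of rows. Your underlying instinct can be salvaged, but only by changing the generating family's $p$-gradient: writing $\rho_p(u)=\langle p-\br(u),\bnu(u)\rangle$ with $\bnu$ the conormal gives rows $\bnu_{u_j}$, and differentiating $\langle\bnu,\br_{u_k}\rangle=0$ yields $\langle\bnu_{u_j},\br_{u_k}\rangle=-h_{jk}$, which \emph{is} non-degenerate. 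That is not the computation you set up, and with the formula $\partial\rho/\partial p_k=\xi_k/\lVert\bx\rVert^{2}$ that you quote, the argument fails.
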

\begin{proof} Let us denote $(u,p) = (u_{1}, u_{2}, u_{3}, p_{1}, p_{2}, p_{3}, p_{4})$. In order to prove that $\rho$ is a Morse family we need to prove that the map germ
$\Delta: \left(U \times \R^4, (u_{0}, p_{0})\right) \rightarrow \R^{3}$, given by
\begin{align*}
\Delta \rho(u, p) = \left( \dfrac{\partial \rho}{\partial u_{1}}, \dfrac{\partial \rho}{\partial u_{2}}, \dfrac{\partial \rho}{\partial u_{3}}  \right)(u,p)
\end{align*}
is not singular. Its jacobian matrix is given by

\begin{align}
J(\Delta\rho)(u_{0}, p_{0}) = \left[\begin{array}{cccc}
\dfrac{\partial^{2}\rho_{p_{0}}}{\partial u_{1}\partial u_{1}} & \dfrac{\partial^{2}\rho_{p_{0}}}{\partial u_{1}\partial u_{2}} & \dfrac{\partial^{2}\rho_{p_{0}}}{\partial u_{1}\partial u_{3}} & \frac{1}{\lVert \bx \rVert^{2}} \bx_{u_{1}} - \frac{2 \langle \bx, \bx_{u_{1}}\rangle}{\lVert \bx  \rVert^4}\bx \\ 
\dfrac{\partial^{2}\rho_{p_{0}}}{\partial u_{1}\partial u_{2}} & \dfrac{\partial^{2}\rho_{p_{0}}}{\partial u_{2}\partial u_{2}} & \dfrac{\partial^{2}\rho_{p_{0}}}{\partial u_{2}\partial u_{3}} & \frac{1}{\lVert \bx \rVert^{2}} \bx_{u_{2}} - \frac{2 \langle \bx, \bx_{u_{2}}\rangle}{\lVert \bx  \rVert^4}\bx \\ 
\dfrac{\partial^{2}\rho_{p_{0}}}{\partial u_{1}\partial u_{3}} & \dfrac{\partial^{2}\rho_{p_{0}}}{\partial u_{2}\partial u_{3}} & \dfrac{\partial^{2}\rho_{p_{0}}}{\partial u_{3}\partial u_{3}} & \frac{1}{\lVert \bx \rVert^{2}} \bx_{u_{3}} - \frac{2 \langle \bx, \bx_{u_{3}}\rangle}{\lVert \bx  \rVert^4}\bx
\end{array}\right]_{3 \times 7}. 
\end{align}
If $u_{0}$ is a non-degenerate critical point of $\rho_{p_{0}}$, then $\rank(Hess(\rho_{p_{0}})(u_{0})) = 3$ and the map germ $\Delta\rho$  is not singular. Thus, we just need to check the case in which $u_{0}$ is a degenerate critical point.
\begin{enumerate}
		\item $\bm{\rank Hess(\rho_{p_{0}})(u_{0}) = 0}$ \\In this case, using proposition (\ref{prop2sec2}), we obtain that the eigenspace associated to the eigenvalue $\frac{1}{\rho_{p_{0}}}$ has dimension $3$, hence, the matrix of the shape operator has $\rank$ $3$ and considering that the $\bx$ is equiaffine, $J(\Delta\rho)(u_{0}, p_{0})$ has $\rank$ $3$.
		
\item $\bm{\rank Hess(\rho_{p_{0}})(u_{0}) = 1}$\\In this case, there are two linearly independent vectors $Y,\,Z \in T_{x(u_{0})}M$, such that $H(X,Y) = H(X,Z)=0$, for all $X \in T_{x(u_{0})}M$. Hence, as seen in proposition (\ref{prop2sec2}), the vectors $Y$ and $Z$ are eigenvectors of the shape operator $S$, with eigenvalue $\frac{1}{\rho_{p_{0}}(u_{0})}$. Notice that $\lbrace \br_{u_{1}}(u_{0}), \br_{u_{2}}(u_{0}), \br_{u_{3}}(u_{0}) \rbrace$ is a set of linearly independent vectors and one of these vectors form a basis of $T_{x(u_{0})}M$ together with $Y$ and $Z$. Let us say that $\beta = \lbrace \br_{u_{1}}(u_{0}), Y, Z \rbrace$ is a basis of $T_{\br(u_{0})}M $ (the other cases are analogues). Thus, we can write
\begin{align}\label{vettang}
\br_{u_{2}}(u_{0}) &= a_{1}\br_{u_{1}} + a_{2}Y + a_{3}Z\\
\br_{u_{3}}(u_{0}) &= b_{1}\br_{u_{1}} + b_{2}Y + b_{3}Z \label{vettang1}
\end{align}
which implies that
\begin{align}\label{jacob}
J(\Delta\rho)(u_{0}, p_{0}) = \left[\begin{array}{cccc}
H( \br_{u_{1}}, \br_{u_{1}}) & a_{1}H(\br_{u_{1}}, \br_{u_{1}}) &
b_{1}H(\br_{u_{1}}, \br_{u_{1}}) &
\frac{1}{\lVert \bx \rVert^{2}} \bx_{u_{1}} - \frac{2 \langle \bx, \bx_{u_{1}}\rangle}{\lVert \bx  \rVert^4}\bx \\ 
a_{1}H( \br_{u_{1}}, \br_{u_{1}}) & a_{1}^2H(\br_{u_{1}}, \br_{u_{1}}) &
a_{1}b_{1}H(\br_{u_{1}}, \br_{u_{1}}) & \frac{1}{\lVert \bx \rVert^{2}} \bx_{u_{2}} - \frac{2 \langle \bx, \bx_{u_{2}}\rangle}{\lVert \bx  \rVert^4}\bx\\ 
b_{1}H( \br_{u_{1}}, \br_{u_{1}}) & a_{1}b_{1}H(\br_{u_{1}}, \br_{u_{1}}) &
b_{1}^2H(\br_{u_{1}}, \br_{u_{1}}) & \frac{1}{\lVert \bx \rVert^{2}} \bx_{u_{3}} - \frac{2 \langle \bx, \bx_{u_{3}}\rangle}{\lVert \bx  \rVert^4}\bx
\end{array}\right], 
\end{align} 
where $H(\br_{u_{1}}, \br_{u_{1}}) \neq 0$, since the hessian matrix has $\rank$ $1$.
It follows from the fact that the shape operator $S$ has two linearly independent eigenvectors with nonzero eigenvalue that its $\rank$ is at least $2$, so in the set $\lbrace \bx_{u_1}, \bx_{u_2}, \bx_{u_3} \rbrace$ two of these vectors need to be linearly independent. It is sufficient to  analyze the case when $ \bx_{u_{1}}$ and $\bx_{u_{2} }$ are linearly independent, the other subcases are similar.

\subitem \textbf{Subcase}: $\lbrace \bx_{u_{1}}, \bx_{u_{2}}\rbrace $ linearly independent\\
First of all, if $ \bx_{u_{1}}$ and $\bx_{u_{2}}$ are linearly independent and $\bx$ is equiaffine, then $\bx_{u_{1}} - \frac{2 \langle \bx, \bx_{u_{1}}\rangle}{\lVert \bx  \rVert^4}\bx$ and $\bx_{u_{2}} - \frac{2 \langle \bx, \bx_{u_{2}}\rangle}{\lVert \bx  \rVert^4}\bx $ are linearly independent. Thus, the only case when $J(\Delta\rho)(u_{0}, p_{0})$ has $\rank$ less than $3$ is when its third line is a linear combination of the first and the second lines. Then the same occurs with the Hessian matrix of $\rho_{p_{0}}$ and if we call $L_{1},\, L_{2}$ and $L_{3}$ the lines of this matrix, we have 
\begin{align*}
L_{3} = \lambda L_{1} + \gamma L_{2}, \; where\, \lambda,\, \gamma \in \R.
\end{align*}
  But we know that  $L_{2} = a_{1}L_{1}$ and $L_{3} = b_{1}L_{1}$ and using the above equation
  \begin{align}\label{coef}
  b_{1} = \lambda + \gamma a_{1}.
  \end{align}
  By considering the same combination on the block $3\times 4$ on the right, we have
  \begin{align*}
  \frac{1}{\lVert \bx \rVert^{2}} \bx_{u_{3}} - \frac{2 \langle \bx, \bx_{u_{3}}\rangle}{\lVert \bx  \rVert^4}\bx = \lambda \left( \frac{1}{\lVert \bx \rVert^{2}} \bx_{u_{1}} - \frac{2 \langle \bx, \bx_{u_{1}}\rangle}{\lVert \bx  \rVert^4}\bx  \right) + \gamma \left( \frac{1}{\lVert \bx \rVert^{2}} \bx_{u_{2}} - \frac{2 \langle \bx, \bx_{u_{2}}\rangle}{\lVert \bx  \rVert^4}\bx  \right).
    \end{align*}
Using (\ref{coef}), $\lambda = b_{1} - a_{1} \gamma$ and
\begin{align*}
\frac{1}{\lVert \bx \rVert^{2}} \bx_{u_{3}} - \frac{2 \langle \bx, \bx_{u_{3}}\rangle}{\lVert \bx  \rVert^4}\bx = \left(  b_{1} - a_{1} \gamma \right) \left( \frac{1}{\lVert \bx \rVert^{2}} \bx_{u_{1}} - \frac{2 \langle \bx, \bx_{u_{1}}\rangle}{\lVert \bx  \rVert^4}\bx  \right) + \gamma \left( \frac{1}{\lVert \bx \rVert^{2}} \bx_{u_{2}} - \frac{2 \langle \bx, \bx_{u_{2}}\rangle}{\lVert \bx  \rVert^4}\bx  \right),
\end{align*}
consequently 
\begin{align*}
\frac{1}{\lVert \bx \rVert^{2}} \bx_{u_{3}} -  \left(  b_{1} - a_{1} \gamma \right)\frac{1}{\lVert \bx \rVert^{2}} \bx_{u_{1}} - \gamma \frac{1}{\lVert \bx \rVert^{2}} \bx_{u_{2}} \in TM \cap <\bx> = \lbrace \bm{0} \rbrace,
\end{align*}
thus $\bx_{u_{3}} = (b_{1} - a_{1}\gamma)\bx_{u_{1}} + \gamma \bx_{u_{2}}$. We know that $\bx_{u_{i}} = -S(\br_{u_{i}})$ and from (\ref{vettang1})
\begin{align*}
b_{1}\bx_{u_{1}} - \frac{b_{2}}{\rho_{p_{0}}}Y -  \frac{b_{3}}{\rho_{p_{0}}}Z = (b_{1} - a_{1}\gamma)\bx_{u_{1}} + \gamma \left(a_{1}\bx_{u_1} - \frac{a_{2}}{\rho_{p_{0}}}Y -  \frac{a_{3}}{\rho_{p_{0}}}Z\right),
\end{align*}
therefore,
\begin{align}\label{eqetapa}
-\frac{b_{2}}{\rho_{p_{0}}}Y -  \frac{b_{3}}{\rho_{p_{0}}}Z =  -\gamma \frac{a_{2}}{\rho_{p_{0}}}Y - \gamma \frac{a_{3}}{\rho_{p_{0}}}Z.
\end{align}
Then, 
\begin{align*}
a_{2}\gamma &= b_{2}\\
a_{3}\gamma &= b_{3}.
\end{align*}
Finally
\begin{align*}
\gamma \bm{x}_{u_{2}} &= a_{1}\gamma \bm{x}_{u_{1}} + a_{2}\gamma Y + a_{3} \gamma Z\\
&= \left(-\lambda + b_{1}\right)\bm{x}_{u_{1}} + b_{2}Y + b_{3}Z \\
&= -\lambda \bm{x}_{u_{1}} + \bm{x}_{u_{3}}.
\end{align*}
But this contradicts the fact that $\lbrace \bm{x}_{u_{1}}, \bm{x}_{u_{2}}, \bm{x}_{u_{3}} \rbrace$ are linearly independent.

\item $\bm{\rank Hess(\rho_{p_{0}})(u_{0}) = 2}$\\
In this case, there is $Y \in T_{\bm{x}(u_{0})}M$ eigenvector of the shape operator $S$ with eigenvalue $\frac{1}{\rho_{p_{0}}(u_{0})}$, by proposition (\ref{prop2sec2}). $\rank Hess(\rho_{p_{0}})(u_{0}) = 2$, then it follows that at least two of the vectors  $\bm{x}_{u_{i}}$, $i=1,2,3$ do not belong to the eigenspace of $\frac{1}{\rho_{p_{0}}(u_{0})}$, otherwise $\rank Hess(\rho_{p_{0}})(u_{0}) < 2$, by proposition (\ref{prop2sec2}). If we look at $\lbrace \bm{x}_{u_{1}}(u_{0}), \bm{x}_{u_{2}}(u_{0}), Y \rbrace$ as a basis of 
$T_{\bm{x}(u_{0})}M$ (the other cases are analogous) and write (in $u_{0}$)
\begin{align*}
\bm{x}_{u_{3}} = a_{1}\bm{x}_{u_{1}} + a_{2}\bm{x}_{u_{2}} + a_{3}Y,
\end{align*}
this case follows in a similar way to the last one.
\end{enumerate}

	\end{proof}
\begin{obs}\normalfont\label{obsmapalag4}
	It follows from the above proposition that the 4-parameter family of germs of functions  $\rho: \left(U \times \R^4, (u_{0}, p_{0})\right) \rightarrow (\R,t_0)$, where $u_{0}$ is a critical point of $\rho_{p_{0}}$, is a Morse family. Furthermore, if $p_{0} = \bm{x}(u_{0}) + t_{0}\bx(u_{0})$ (where $t_{0} = \rho_{p_{0}}(u_{0})$), the Lagrangian immersion associated to this Morse family is $L: \left( U \times \R, (u_{0}, t_{0}) \right) \rightarrow T^{*}\R^4$, given by
	\begin{align*}
	L(u,t) = \left( \bm{x}(u) + t\bx(u), \dfrac{\bx}{\lVert \bx \rVert^{2}}(u)  \right),
	\end{align*}
	whose Lagrangian map associated is $F_{(\bm{x}, \bx)} = \pi \circ L(u,t) = \bm{x}(u) + t\bx(u)$, where $\pi: T^{*}\R^4 \rightarrow \R^4$.
\end{obs}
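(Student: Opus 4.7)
The plan is to show that the Jacobian of $\Delta\rho = (\partial_{u_1}\rho, \partial_{u_2}\rho, \partial_{u_3}\rho)$ at the critical point $(u_0,p_0)$ has full rank $3$. The matrix $J(\Delta\rho)(u_0,p_0)$ is $3 \times 7$, with left $3\times 3$ block equal to the Hessian $H_0 := \text{Hess}(\rho_{p_0})(u_0)$ and right $3\times 4$ block whose $i$-th row is (the transpose of) $\frac{1}{\lVert \bx \rVert^2}\bx_{u_i} - \frac{2\langle \bx, \bx_{u_i}\rangle}{\lVert \bx \rVert^4}\bx$. If $u_0$ is a non-degenerate critical point then $H_0$ is already invertible and we are done. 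If $t_0 = \rho_{p_0}(u_0) = 0$, Proposition \ref{prop2sec2}(b) reduces $H_0$ to $h$, which is non-degenerate, so we may assume $t_0 \neq 0$.

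The key input is the combination of Proposition \ref{prop2sec2}(b) and the equiaffine hypothesis: the non-degeneracy of $h$ gives $\ker H_0 = \ker(I - t_0 S)$, i.e., the $(1/t_0)$-eigenspace of the shape operator $S$, while $\bx$ equiaffine gives $\bx_{u_i} = -S(\br_{u_i})$. Together, these translate information about $\ker H_0$ directly into information about the vectors $\bx_{u_i}$, which is what controls the right block modulo $\langle \bx \rangle$ (legitimate because $\bx$ is transversal to $M$). I will split on $\rank H_0 \in \{0,1,2\}$.

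If $\rank H_0 = 0$, then $S = (1/t_0)\,\text{Id}$, so $\bx_{u_i} = -(1/t_0)\br_{u_i}$ are linearly independent in $T_{\br(u_0)}M$; projecting the right block modulo $\langle \bx \rangle$ immediately yields rank $3$. For $\rank H_0 = 1$ (corank $2$) pick a basis $\{\br_{u_1}, Y, Z\}$ of $T_{\br(u_0)}M$ with $Y,Z$ spanning $\ker H_0$ (reindexing if necessary), expand $\br_{u_2},\br_{u_3}$ in this basis, and write $J(\Delta\rho)(u_0,p_0)$ explicitly so that the three rows of $H_0$ are scalar multiples of the first. Assume for contradiction a nontrivial relation $L_3 = \lambda L_1 + \gamma L_2$ among rows. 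The left-block portion pins down $\lambda, \gamma$ in terms of the expansion coefficients, and substituting these into the right block (modulo $\langle \bx \rangle$) produces a linear dependence $\gamma\bx_{u_2} = -\lambda\bx_{u_1} + \bx_{u_3}$ via $\bx_{u_i} = -S(\br_{u_i})$ and the eigenequations $SY = (1/t_0)Y$, $SZ = (1/t_0)Z$; this contradicts the linear independence of $\{\bx_{u_1}, \bx_{u_2}, \bx_{u_3}\}$ (guaranteed since $S$ has rank $\geq 2$: its $(1/t_0)$-eigenspace is $2$-dimensional and $1/t_0 \neq 0$, so two of the $\bx_{u_i}$ are independent and, after the basis expansion, the third follows). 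The case $\rank H_0 = 2$ is analogous, with a basis $\{\br_{u_1}, \br_{u_2}, Y\}$ and a one-dimensional kernel.

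The main obstacle will be the $\rank H_0 = 1$ case: the argument must branch into subcases according to which pair among $\{\bx_{u_1}, \bx_{u_2}, \bx_{u_3}\}$ is linearly independent, and in each subcase one has to juggle the equiaffine identity, the basis expansion, and transversality to reach the final forbidden dependence. Fortunately the subcases are symmetric under relabeling of the $u_i$, so writing out one of them carefully is enough; the $\rank H_0 = 2$ case is then a minor variant of the same bookkeeping.
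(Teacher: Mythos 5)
There is a genuine gap here: your proposal proves the wrong statement. What you have written out is, essentially, the paper's proof of the \emph{preceding} proposition (that $\Delta\rho$ is non-singular, i.e.\ that $\rho$ is a Morse family), via the case analysis on $\rank \mathrm{Hess}(\rho_{p_0})(u_0)\in\{0,1,2\}$ and the equiaffine identity $\bx_{u_i}=-S(\br_{u_i})$. But the remark explicitly takes that result as given (``It follows from the above proposition that \ldots is a Morse family''). The substantive new content of the remark is the \emph{identification of the Lagrangian immersion and Lagrangian map} attached to this Morse family, and your proposal does not address it at all. Concretely, one must (i) identify the catastrophe set $C_\rho=\{(u,p): \partial\rho/\partial u_i=0,\ i=1,2,3\}$ as $\{(u,p): p=\br(u)+t\bx(u)\ \text{for some}\ t\}$ --- this is Remark \ref{obsconjcat}, resting on Proposition \ref{prop2sec2}(a); (ii) parametrize $C_\rho$ by $(u,t)\mapsto (u,\br(u)+t\bx(u))$; (iii) compute the $p$-derivatives $\partial\rho/\partial p_i=\xi_i/\lVert\bx\rVert^2$ from (\ref{funcsup}); and (iv) apply the standard generating-family construction, which sends $(u,t)\in C_\rho$ to $\left(p,\ \partial\rho/\partial p\right)=\left(\br(u)+t\bx(u),\ \dfrac{\bx}{\lVert\bx\rVert^2}(u)\right)$, so that $\pi\circ L=F_{(\br,\bx)}$. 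None of these steps appear in your write-up.

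A secondary remark on the part you did write: in the $\rank H_0=1$ case you invoke ``the linear independence of $\{\bx_{u_1},\bx_{u_2},\bx_{u_3}\}$'' as the source of the final contradiction, but your own justification only yields that \emph{two} of these vectors are independent ($S$ has rank at least $2$ because its $(1/t_0)$-eigenspace is $2$-dimensional with $1/t_0\neq 0$); full independence of all three would force $S$ to be invertible, which is not guaranteed. The paper's proof of the proposition has the same soft spot, so this is not a deviation from the source, but if you intend to reproduce that argument you should either justify the three-fold independence or rephrase the contradiction in terms of the two vectors you actually control. In any case, the main correction needed is to supply the Lagrangian-immersion computation, which is what the remark actually asserts.
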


\begin{defi}\normalfont
		Let $\bm{x}: U \rightarrow \R^4$, with $\bm{x}(U)=M$, be a non-degenerate hypersurface and take $\bx: U \rightarrow (\R^4 \setminus \lbrace \bm{0} \rbrace)$ an equiaffine transversal vector field. Define $\bnu: U \rightarrow (\R^4 \setminus \lbrace \bm{0} \rbrace)$, such that for each $\br(u) = p \in M$ and $v \in T_{p}(M)$
	\begin{align}\label{mapaconormal}
	\langle \bnu(u), \bx(u) \rangle =1\; and \; \langle \bnu(u), v \rangle =0.
	\end{align}
	Each $\bnu(u)$ is called the \textit{conormal vector} of $\br$ relative to $\bx$ at $p$. The map $\bnu$ is called the \textit{conormal map}.
\end{defi}
\begin{obs}\normalfont
	Using (\ref{funcaosuporte}) and (\ref{mapaconormal}), we obtain
	\begin{align*}
	\rho_{p}(u) = \langle p-\br(u), \bnu(u) \rangle,
	\end{align*}
	where $\rho_{p}$ is the affine support function.
\end{obs}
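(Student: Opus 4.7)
The plan is to derive the identity $\rho_p(u) = \langle p - \br(u), \bnu(u)\rangle$ by a direct pairing of the tangential-transversal decomposition (\ref{funcaosuporte}) with the conormal vector defined in (\ref{mapaconormal}). The point is that $\bnu$ is designed precisely as the linear functional dual to $\bx$ along the transversal direction and vanishing on the tangent space, so it acts as a projector that isolates the $\rho_p$-component of the decomposition.

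Concretely, I would start from (\ref{funcaosuporte}),
\begin{equation*}
p - \br(u) = v(u) + \rho_p(u)\,\bx(u), \qquad v(u) \in T_{\br(u)}M,
\end{equation*}
and take the Euclidean inner product of both sides with $\bnu(u)$. By bilinearity,
\begin{equation*}
\langle p - \br(u),\bnu(u)\rangle = \langle v(u),\bnu(u)\rangle + \rho_p(u)\,\langle \bx(u),\bnu(u)\rangle.
\end{equation*}
The defining relations (\ref{mapaconormal}) say $\langle \bnu(u),\bx(u)\rangle = 1$ and $\langle \bnu(u),w\rangle = 0$ for every $w \in T_{\br(u)}M$; in particular the first term vanishes because $v(u)$ is tangent to $M$ at $\br(u)$, and the second term collapses to $\rho_p(u)$. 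Combining these two substitutions yields the claimed formula.

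There is no substantive obstacle here: the content of the remark is just that the scalar $\rho_p(u)$ isolated by the transversal decomposition coincides with the dual pairing against the conormal. The only point worth flagging is a conceptual one, namely that although the auxiliary formula (\ref{funcsup}) for $\rho_p$ relies on the Euclidean metric through the normalization $\bx/\lVert\bx\rVert^2$, the present identity does not: it expresses $\rho_p$ intrinsically as an equiaffine invariant via the conormal map, and this is precisely the form that will be convenient when recognizing $\rho$ as the generating family of the Lagrangian immersion in Remark \ref{obsmapalag4}.
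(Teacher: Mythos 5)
Your derivation is correct and is exactly the computation the paper intends: the remark is stated without proof precisely because pairing the decomposition (\ref{funcaosuporte}) with $\bnu(u)$ and using the two defining relations in (\ref{mapaconormal}) immediately kills the tangential term and normalizes the transversal one. Nothing further is needed.
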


\subsection{Blaschke Exact Normal Congruences}\ \  \\
	Given a non-degenerate hypersurface $\br(U) = M$, we know that the affine fundamental form $h$ is non-degenerate, then it can be treated as a non-degenerate metric (not necessarily positive-definite) on $M$.
\begin{defi}\normalfont
	Let $\br: U \rightarrow \R^4$ be a non-degenerate hypersurface. A transversal vector field $\bx: U \rightarrow \R^4 \setminus \lbrace \bm{0} \rbrace$ satisfying
	\begin{enumerate}
		\item $\bx$ is equiaffine.
		\item The induced volume element $\theta$ coincides with the volume element $\omega_{h}$ of the non-degenerate metric $h$.
	\end{enumerate}
is called \textit{the Blaschke normal vector field} of $M$.

\end{defi}

Let $Emb_{ng}(U, \R^4) = \lbrace \br: U \rightarrow \R^4: \br\; \text{is a non-degenerate embedding} \rbrace$ be the space of non-degenerate regular hypersurfaces with the Whitney $C^{\infty}$- topology. Define the space of
the Blaschke exact normal congruences as
\begin{align*}
BEN(U, \R^4 \times (\R^4 \setminus \lbrace \bm{0} \rbrace)) = \left\lbrace (\bm{x}, \bx): \bm{x} \in Emb_{ng}(U, \R^4),\; \bx\; \text{is the} \right.\\
\left. \text{Blaschke normal vector field of}\; \bm{x} \right\rbrace.
\end{align*}

\begin{obs}\normalfont\label{observ5.3}
	Given a non-degenerate hypersurface $\br(U) = M$, its Blaschke vector field is unique up to sign and is given by
\begin{align}\label{vetorBlaschke}
\bx(u) = \left|K(u)\right|^{1/4}N(u) + Z(u), 
\end{align}
where $K$ is the Gaussian curvature of $M$, $N$  its unit normal and $Z$ is a vector field on $M$, such that
\begin{align*}
II(Z, X) = -X(\left| K \right|^{1/4}), \forall\, X \in TM
\end{align*}
where $II$ denotes the second fundamental form of $M$ (for details, see example 3.4 in \cite{geomAfim}). We can write the vector field $Z$ in terms of the coefficients of the second fundamental form and the partial derivatives of $\left| K \right|^{1/4}$ . From (\ref{vetorBlaschke}) it follows that the conormal vector relative to the Blaschke vector field of a non-degenerate hypersurface in $\R^4$ is given by
\begin{align}\label{vetorconormal}
\bnu(u) =  \left|K(u)\right|^{-1/4}N(u)
\end{align}
Then, we identify (with the Whitney $C^{\infty}$-topology) the spaces $Emb_{ng}(U, \R^4)$ and 
\begin{multline*}
S_{con}(U, \R^4 \times \R^4 \setminus \lbrace \bm{0} \rbrace) = \left\lbrace (\bm{x}, \nu) \in C^{\infty}(U, \R^4 \times \R^4 \setminus \lbrace \bm{0} \rbrace): \bm{x} \in Emb_{ng}(U, \R^4)\;\text{and}\; \bnu\; \text{is} \right.\\
\left. \text{the conormal of $\bm{x}$ relative to the Blaschke vector field} \right\rbrace 
\end{multline*}
\end{obs}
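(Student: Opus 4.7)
The plan is to parametrize any transversal vector field $\bx$ along $M$ as $\bx = \phi\,N + Z$, where $\phi \in C^{\infty}(U)$ is nowhere zero and $Z$ is a smooth tangent vector field on $M$, and then extract $\phi$ and $Z$ from the two defining axioms of the Blaschke normal (the equiaffine condition $\tau = 0$, and $\theta = \omega_{h}$). Throughout I would fix a local tangent frame $\{e_{1},e_{2},e_{3}\}$ on $M$, and write $\overline{\nabla}$ for the Levi--Civita connection of the Euclidean metric induced on $M$ and $II$ for its Euclidean second fundamental form.

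First, handle the equiaffine condition. The Gauss--Weingarten formulas give
\begin{equation*}
D_{X}(\phi N + Z) = X(\phi)\,N - \phi\,A(X) + \overline{\nabla}_{X}Z + II(X,Z)\,N,
\end{equation*}
where $A$ is the Euclidean shape operator. Rewriting $N = (\bx - Z)/\phi$ converts this into the decomposition $T\R^{4} = TM \oplus \langle \bx\rangle_{\R}$; the $\bx$-coefficient is exactly $\tau(X) = (X(\phi) + II(X,Z))/\phi$. Since $M$ is non-degenerate, $II$ is non-degenerate on $TM$, so imposing $\tau \equiv 0$ uniquely determines $Z$ in terms of $\phi$ through the linear system $II(Z,X) = -X(\phi)$ that appears in the statement.

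Next, pin down $\phi$ from the volume condition. Comparing $D_{X}Y = \nabla_{X}Y + h(X,Y)\bx$ with $D_{X}Y = \overline{\nabla}_{X}Y + II(X,Y)\,N$ and reading the component along $N$ yields the key identity $h(X,Y) = II(X,Y)/\phi$. In the frame $\{e_{1},e_{2},e_{3}\}$ one obtains $\theta(e_{1},e_{2},e_{3}) = \phi\,\omega(e_{1},e_{2},e_{3},N)$ (the term with $Z$ drops since $Z$ is tangent) and $\omega_{h}(e_{1},e_{2},e_{3}) = \sqrt{|\det(h_{ij})|}$. Expanding $\det(h_{ij}) = \det(II_{ij})/\phi^{3}$ together with $\det(II_{ij}) = K\det(g_{ij})$ and $\omega(e_{1},e_{2},e_{3},N)^{2}=\det(g_{ij})$ turns $\theta = \omega_{h}$ into a single algebraic equation in $\phi$ whose solutions are $\phi = \pm|K|^{1/4}$ in the normalization of \cite{geomAfim}. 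This determines $\phi$ up to sign; combined with the previous step it gives uniqueness of $\bx$ up to sign.

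The conormal formula and the identification of spaces then follow readily. From $\langle\bnu,v\rangle = 0$ for every $v \in T_{p}M$ one has $\bnu = cN$ for some function $c$, and pairing with $\bx = |K|^{1/4}N + Z$ in $\langle \bnu,\bx\rangle = 1$ forces $c = |K|^{-1/4}$, whence $\bnu = |K|^{-1/4}N$. Uniqueness of $\bx$ up to sign then produces the bijection $Emb_{ng}(U,\R^{4})\leftrightarrow S_{con}(U,\R^{4}\times(\R^{4}\setminus\{\bm{0}\}))$ sending $\br$ to $(\br,\bnu)$ (with inverse the projection on the first factor); it is a homeomorphism in the Whitney $C^{\infty}$-topology because $K$, the Gauss map $N$, and the tangent field $Z$ all depend smoothly on finitely many derivatives of $\br$. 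The delicate step I expect is the sign and normalization bookkeeping in the volume identity so as to recover the specific exponent $1/4$ and to interpret $|K|$ coherently across elliptic and hyperbolic regions (where $K$ has opposite signs but is nonzero by non-degeneracy).
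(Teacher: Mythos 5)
Your strategy is essentially the derivation that the paper itself does not carry out: the remark simply quotes Example 3.4 of \cite{geomAfim} and then reads off the conormal and the identification of spaces, whereas you reconstruct the formula from the two defining conditions. Your reduction of the equiaffine condition to $II(Z,X)=-X(\phi)$ via $\tau(X)=\bigl(X(\phi)+II(X,Z)\bigr)/\phi$, the identity $h=II/\phi$, the determinant bookkeeping $\det(h_{ij})=\det(II_{ij})/\phi^{3}$, $\det(II_{ij})=K\det(g_{ij})$, $\omega(e_{1},e_{2},e_{3},N)^{2}=\det(g_{ij})$, and the final steps (uniqueness up to sign, $\bnu=\phi^{-1}N$ from $\langle\bnu,\bx\rangle=1$, continuity of $\br\mapsto(\br,\bnu)$ and of the projection back) are all correct in method and give a self-contained proof of the kind the paper outsources.

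There is, however, a concrete slip at the decisive step: the algebraic equation you set up does not have $\phi=\pm\left|K\right|^{1/4}$ as its solution. Your own identities turn $\theta=\omega_{h}$ into $|\phi|\,(\det g_{ij})^{1/2}=|K|^{1/2}(\det g_{ij})^{1/2}\,|\phi|^{-3/2}$, that is $|\phi|^{5/2}=|K|^{1/2}$, whose solutions are $\phi=\pm\left|K\right|^{1/5}$. In general, for a non-degenerate hypersurface $M^{n}\subset\R^{n+1}$ the normal component of the Blaschke normal is $|K|^{1/(n+2)}$; the exponent $1/4$ belongs to the surface case $n=2$ in $\R^{3}$, not to hypersurfaces in $\R^{4}$. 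Carried to its correct conclusion, your computation yields $\bx=\left|K\right|^{1/5}N+Z$ with $II(Z,X)=-X(\left|K\right|^{1/5})$ and $\bnu=\left|K\right|^{-1/5}N$, which disagrees with the exponents displayed in (\ref{vetorBlaschke}) and (\ref{vetorconormal}) -- so your argument, done right, actually flags this discrepancy in the remark as stated, while everything else (uniqueness up to sign and the identification of $Emb_{ng}(U,\R^{4})$ with $S_{con}$) is insensitive to the value of the exponent. As written, though, the sentence ``whose solutions are $\phi=\pm\left|K\right|^{1/4}$'' does not follow from your equation and is the one step of the proposal that fails.
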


\begin{defi}\normalfont
	Let $\bm{x}: U \rightarrow \R^4$, with $\bm{x}(U)=M$, be a non-degenerate hypersurface. We define the \textit{conormal bundle} of $M$ by 
	\begin{align*}
	N^{*}_{\bm{x}} = \lbrace (p,v): p \in M,\; \langle v, w \rangle = 0,\; \forall\; w \in T_{p}M \rbrace \subset T^{*}\R^4.
	\end{align*}
\end{defi}

\begin{obs}\normalfont\label{observ5.4}
	Note that we can look at $S_{con}(U, \R^4 \times \R^4 \setminus \lbrace \bm{0} \rbrace)$ as a section of the conormal bundle of $M$.	
\end{obs}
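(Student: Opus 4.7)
The plan is to make the identification explicit and then verify the section properties. Given any element $(\br, \bnu) \in S_{con}(U, \R^4 \times \R^4 \setminus \lbrace \bm{0} \rbrace)$, since $\br$ is an embedding, the map $\br: U \to M$ is a diffeomorphism onto its image; let $\br^{-1}: M \to U$ denote its inverse. I would then define
\[
\sigma_{(\br, \bnu)}: M \to N^{*}_{\br}, \qquad \sigma_{(\br, \bnu)}(p) = (p, \bnu(\br^{-1}(p))).
\]

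The first thing to verify is that $\sigma_{(\br, \bnu)}$ indeed takes values in $N^{*}_{\br}$. This is immediate from the defining properties of the conormal: by equation (\ref{mapaconormal}), $\langle \bnu(u), w \rangle = 0$ for every $w \in T_{\br(u)}M$, which is precisely the condition cutting out $N^{*}_{\br}$ inside $T^{*}\R^4$. Next, composing with the bundle projection $\pi: N^{*}_{\br} \to M$ yields $\pi \circ \sigma_{(\br, \bnu)}(p) = p$, so $\sigma_{(\br, \bnu)}$ is a smooth section. Smoothness follows from the smoothness of $\bnu$ and of $\br^{-1}$, the latter a consequence of $\br$ being an embedding.

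Conversely, I would show that any smooth section $\sigma$ of $N^{*}_{\br}$ which arises from the conormal map relative to the Blaschke normal vector field of $\br$ yields an element of $S_{con}$, by reading $\bnu := \sigma \circ \br$ off the second component. Thus the assignment $(\br, \bnu) \mapsto \sigma_{(\br, \bnu)}$ identifies $S_{con}(U, \R^4 \times \R^4 \setminus \lbrace \bm{0} \rbrace)$ with the subset of sections of $N^{*}_{\br}$ whose underlying embedding is non-degenerate and whose conormal comes from the (essentially unique, by Remark \ref{observ5.3}) Blaschke normal vector field. The only subtlety, which I would mention but not dwell on, is that this identification is a homeomorphism with respect to the Whitney $C^{\infty}$-topologies on both sides; this is a standard consequence of the fact that $\bnu$ depends smoothly on $\br$ (via formula (\ref{vetorconormal})), so no independent argument beyond reorganizing the data is required.
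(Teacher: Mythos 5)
Your verification is correct and coincides with the intended content of the remark, which the paper states without proof: the pair $(\br,\bnu)$, viewed as the map $u\mapsto(\br(u),\bnu(u))$, parametrizes a smooth section of $N^{*}_{\br}$ precisely because of the defining orthogonality condition (\ref{mapaconormal}) and the fact that $\br$ is an embedding. Nothing further is needed.
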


Let us define the following maps
\begin{align}
H: \left( \R^{4}  \times \R^4 \setminus \lbrace \bm{0} \rbrace \right) \times \R^4 &\rightarrow \R\\
(A, B, C) &\mapsto \langle B, C-A \rangle \nonumber
\end{align}
\begin{align}
g: U &\rightarrow  \R^{4}  \times \R^4 \setminus \lbrace \bm{0} \rbrace\\
u &\mapsto (\bm{x}(u), \bnu(u)),\nonumber
\end{align}
where $g \in S_{con}(U, \R^{4}  \times \R^4 \setminus \lbrace \bm{0} \rbrace)$.
If we fix a parameter $C$,  $H_{C}: \R^{4}  \times \R^4 \setminus \lbrace \bm{0} \rbrace \rightarrow \R$ is a submersion, therefore, $H_{C} \circ g$ is a contact map. 
Finally, note that \begin{align*}\rho(u,p) = H \circ \left(g, I_{d}\big|_{\R^4} \right)(u,p).\end{align*}

\begin{prop}\label{Prop5.4}
For a residual subset of $Emb_{ng}(U, \R^4\times \R^4 \setminus \lbrace \bm{0} \rbrace)$ the family $\rho$ is locally 
$\cP$-$\mathcal{R}^{+}$-versal.
\end{prop}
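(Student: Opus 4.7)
The plan is to apply the Montaldi-type Theorem \ref{teoMontaldiadap} to the master family $H: (\R^{4} \times (\R^{4} \setminus \lbrace \bm{0} \rbrace)) \times \R^{4} \to \R$, $H(A, B, C) = \langle B, C - A \rangle$, regarded as an unfolding of the fiber $H_{C}(A,B) = \langle B, C - A \rangle$ parametrized by $C \in \R^{4}$, with base manifold $M = U$ and $g = (\br, \bnu) \in S_{con}$ playing the role of the varying immersion. First I verify that $H$ is locally $\cK$-versal at every base point: since $\partial H_{C}/\partial A_{i} = -B_{i}$ and $B \in \R^{4} \setminus \lbrace \bm{0} \rbrace$, the fiber $H_{C}$ is a submersion everywhere on its domain, its local algebra is trivial, and so $H$ is automatically a $\cK$-versal unfolding at every $(A_{0}, B_{0}, C_{0})$.

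With local $\cK$-versality of the master family in hand, Theorem \ref{teoMontaldiadap} produces, for each $\cK$-invariant submanifold $W \subset J^{r}(U, \R)$ (with $r$ large enough), a residual set
\begin{align*}
R_{W} = \left\lbrace g \in \mathrm{Imm}(U, \R^{4} \times (\R^{4} \setminus \lbrace \bm{0} \rbrace)): j^{r}_{1} \phi_{g} \pitchfork W \right\rbrace,
\end{align*}
where $\phi_{g}(u, p) = H(g(u), p)$ agrees with the affine support function $\rho(u,p)$ whenever $g = (\br, \bnu)$. I then fix a countable stratification of $J^{r}(U, \R)$ by $\mathcal{R}^{+}$-orbits of codimension at most $4$ together with semialgebraic strata of codimension at least $5$ (which is possible by the standard classification of function singularities in three variables), and intersect the corresponding $R_{W}$'s to produce a residual subset $R \subset \mathrm{Imm}$. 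Transversality of $j^{r}_{1}\rho$ to each stratum at $(u_{0}, p_{0})$ is equivalent to $\rho$ being a locally $\cK$-versal unfolding of $\rho_{p_{0}}$ at $u_{0}$.

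The third step is to transfer this residuality from $\mathrm{Imm}(U, \R^{4} \times (\R^{4} \setminus \lbrace \bm{0} \rbrace))$ to $Emb_{ng}(U, \R^{4})$. By Remarks \ref{observ5.3} and \ref{observ5.4}, the conormal $\bnu_{\br}$ is an explicit Whitney-continuous function of $\br$ and its partial derivatives, producing a continuous section $\Psi: Emb_{ng}(U, \R^{4}) \to \mathrm{Imm}(U, \R^{4} \times (\R^{4} \setminus \lbrace \bm{0} \rbrace))$, $\Psi(\br) = (\br, \bnu_{\br})$, whose image is $S_{con}$. The main obstacle lies precisely here: one must show that $\Psi^{-1}(R)$ is residual in $Emb_{ng}$. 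This amounts to re-running the proof of the Basic Transversality Lemma with $Emb_{ng}$ as parameter space, which in turn requires verifying that deformations of $\br$ through non-degenerate embeddings induce enough variation of the pair $(\br, \bnu_{\br})$ at each $u \in U$ so that the natural evaluation map from the tangent space of $Emb_{ng}$ into the relevant jet fiber is surjective in the directions needed for transversality.

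Finally, because $\rho$ has already been shown to be a Morse family of functions, the standard equivalence in Lagrangian singularity theory between $\cK$-versality and $\cP$-$\mathcal{R}^{+}$-versality for Morse families (see Chapter 5 in \cite{Livro}) implies that, for $\br$ in the residual set $\Psi^{-1}(R) \subset Emb_{ng}(U, \R^{4})$, the family $\rho$ is locally $\cP$-$\mathcal{R}^{+}$-versal, which establishes the proposition.
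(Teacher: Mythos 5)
Your route is the same as the paper's: introduce the master family $H(A,B,C)=\langle B,C-A\rangle$, note that each $H_{C}$ is a submersion so that $H$ is locally versal, and apply Theorem \ref{teoMontaldiadap} to the composite $\phi_{g}=H\circ(g\times \mathrm{id})$ with $g=(\br,\bnu)$, which recovers $\rho$. However, your closing step does not go through as written. You pass from transversality to $\cK$-orbits to ``the standard equivalence in Lagrangian singularity theory between $\cK$-versality and $\cP$-$\mathcal{R}^{+}$-versality for Morse families''; no such equivalence exists. For a function germ $f$ one has $T\cK_{e}f=\langle \partial f/\partial x_{i}\rangle+\langle f\rangle$ while $T\mathcal{R}^{+}_{e}f=\langle \partial f/\partial x_{i}\rangle+\R$, and already $F(x,u)=x^{3}+ux$ is $\mathcal{R}^{+}$-versal but not $\cK$-versal; $\cK$-versality of a generating family governs Legendrian, not Lagrangian, stability. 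Relatedly, you invoke Montaldi for $\cK$-invariant submanifolds $W$ but then stratify by $\mathcal{R}^{+}$-orbits, which are not $\cK$-invariant. The repair is simple and is what the paper implicitly does: since $H_{C}$ is a submersion, hence $\mathcal{R}$-stable, $H$ is locally $\cG$-versal for \emph{every} Mather subgroup $\cG$, so one applies Theorem \ref{teoMontaldiadap} directly with $\cG=\mathcal{R}^{+}$ and $\mathcal{R}^{+}$-invariant strata, obtaining local $\cP$-$\mathcal{R}^{+}$-versality with no detour through $\cK$.

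The step you single out as ``the main obstacle'' is genuinely left open in your proposal: Theorem \ref{teoMontaldiadap} produces a residual subset of $\mathrm{Imm}\left(U,\R^{4}\times(\R^{4}\setminus\lbrace\bm{0}\rbrace)\right)$, whereas the proposition asserts residuality in the much thinner subspace $S_{con}$ of conormal lifts $(\br,\bnu_{\br})$ of non-degenerate embeddings. Since $\Psi:\br\mapsto(\br,\bnu_{\br})$ is not an open map into the space of immersions, preimages of residual sets under it need not be residual, and the perturbation argument you describe (showing that variations of $\br$ within $Emb_{ng}(U,\R^{4})$ generate enough directions in the relevant jet fibers) is stated but not executed. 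For comparison, the paper does not execute it either: its proof consists of reading the identification of $Emb_{ng}(U,\R^{4})$ with $S_{con}$ in Remark \ref{observ5.3} as licensing Montaldi's theorem with that identified space as parameter space. So your diagnosis of where the real work lies is accurate, but as it stands your argument (like the paper's) establishes the conclusion only modulo that transfer of genericity.
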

\begin{proof}
	Following the identification in remark ({\ref{observ5.3}}) and the notation in remark ({\ref{observ5.4}}) we can apply theorem (\ref{teoMontaldiadap}) in order to show that there is a residual subset of $Emb_{ng}(U, \mathbb{R}^4 \times \mathbb{R}^4 \setminus \lbrace 0 \rbrace)$ for which $\rho$ is locally $\mathcal{P}$- ${\cal{R}}^{+}$-versal.
\end{proof}

\begin{teo}
There is a residual subset $O \subset Emb_{ng}(U, \R^4)$ such that the germ of the Blaschke exact normal congruence $F_{(\br, \bx)}$ at any point $(u_{0}, t_{0}) \in U \times I$ is a
Lagrangian stable map germ for any $\br \in O$, i.e., $ \forall\, \br \in O$, $F_{(\br,\bx)}$ is an immersive germ, or $\cA$-equivalent to one of the normal forms in table (\ref{table1}).
\end{teo}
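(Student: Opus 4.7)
The plan is to deduce Lagrangian stability from the $\cP$-$\mathcal{R}^{+}$-versality of the affine support-function family $\rho$, and then read off the normal forms from Arnold's classification of simple Lagrangian singularities in dimension $4$. First I would invoke Remark (\ref{obsmapalag4}), which identifies $\rho$ as a generating Morse family for the Lagrangian immersion $L(u,t) = \left(\br(u)+t\bx(u),\, \bx/\lVert \bx \rVert^{2}(u)\right)$ whose Lagrangian projection is precisely the Blaschke exact normal congruence $F_{(\br,\bx)}$. This places $F_{(\br,\bx)}$ in the framework where Lagrangian stability is equivalent to $\cP$-$\mathcal{R}^{+}$-versality of any generating Morse family, which is the standard characterization used in Chapter 5 of \cite{Livro}.

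Next I would apply Proposition (\ref{Prop5.4}): there is a residual subset $O \subset Emb_{ng}(U, \R^4)$ such that the family $\rho$ is locally $\cP$-$\mathcal{R}^{+}$-versal at every pair $(u_{0}, p_{0})$ with $u_{0}$ a critical point of $\rho_{p_{0}}$. Combining this with the generating-family criterion, for every $\br \in O$ and every $(u_{0}, t_{0}) \in U \times I$ the germ of $F_{(\br,\bx)}$ at $(u_{0}, t_{0})$ is either a Lagrangian immersion (when the projection happens to be non-singular there) or a Lagrangian stable map germ.

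To obtain the explicit list in Table (\ref{table1}), I would appeal to Arnold's classification of simple Lagrangian map germs in dimension $4$. Since source and target both have dimension $4$, the Lagrangian stable germs are exactly those arising from the simple hypersurface singularities $A_{1}, A_{2}, A_{3}, A_{4}, D_{4}^{+}, D_{4}^{-}, D_{5}$, whose $\mathcal{R}^{+}$-codimension does not exceed $3$; spelling out the associated caustic normal forms yields precisely the Fold, Cusp, Swallowtail, Butterfly, Hyperbolic Umbilic, Elliptic Umbilic and Parabolic Umbilic entries of the table.

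The main obstacle I anticipate is the careful verification that the version of $\cP$-$\mathcal{R}^{+}$-versality produced by Proposition (\ref{Prop5.4}) is really the one that yields Lagrangian stability via the generating-family dictionary, rather than merely $\cA$-stability of the underlying smooth map. Once this translation is in place, the already established Morse-family property of $\rho$ combines with Proposition (\ref{Prop5.4}) and the ADE list to give the theorem; in particular it answers the conjecture of \cite{Izumiya} in the affirmative.
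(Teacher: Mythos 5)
Your proposal is correct and follows essentially the same route as the paper: identify $\rho$ as a generating Morse family for the Lagrangian immersion whose Lagrangian projection is $F_{(\br,\bx)}$ (Remark \ref{obsmapalag4}), invoke Proposition \ref{Prop5.4} for local $\cP$-$\mathcal{R}^{+}$-versality on a residual set, and conclude Lagrangian stability via the generating-family criterion (Theorem 5.4 in \cite{Livro}), with the normal forms of Table \ref{table1} coming from the standard classification of Lagrangian stable germs in dimension $4$. The translation step you flag as a potential obstacle is exactly what the cited Theorem 5.4 of \cite{Livro} supplies, so no gap remains.
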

\begin{proof}
	Let us take the map germ $F_{(\bm{x}, \bx)}: (U \times \R, (u_{0}, t_{0})) \rightarrow (\R^4, p_{0})$. Thus $u_{0}$ is a critical point of $\rho_{p_{0}}$, by proposition (\ref{prop2sec2}). Then,  $\rho: \left(U \times \R^3, (u_{0}, p_{0})\right) \rightarrow (\R,t_{0})$ is a Morse family of functions. Furthermore, by Remark (\ref{obsconjcat}), the Lagrangian map related to this family is $F_{(\bm{x}, \bx)}$. It is known that if $\rho$ is $\cP$-${\cal{R}}^{+}$-versal, then $F_{(\bm{x}, \bx)}$ is Lagrangian stable (see Theorem 5.4 in \cite{Livro}), so the result follows from proposition (\ref{Prop5.4}).
	\end{proof}

The map
\begin{align}
\Pi: BEN\left(U, \R^4 \times \left(\R^4 \setminus \lbrace \textbf{0} \rbrace \right)\right) \rightarrow Emb_{ng}(U, \R^4),
\end{align}
given by $\Pi(\br, \bx) = \br$, is open and continuous. Using this, we obtain the following corollary.

\begin{cor}\label{cor5.1}
	There is a residual subset $\cO \subset  BEN\left(U, \R^4 \times \left(\R^4 \setminus \lbrace \bm{0} \rbrace \right)\right)$, such that the germ of the Blaschke exact normal congruence $F_{(\br, \bx)}$ at any point $(u_{0}, t_{0}) \in U \times I$ is a
Lagrangian stable map germ for any $(\br, \bx) \in \cO$, i.e., $ \forall\, (\br, \bx) \in \cO$, $F_{(\br,\bx)}$ is an immersive germ, or $\cA$-equivalent to one of the normal forms in  table (\ref{table1}).
\end{cor}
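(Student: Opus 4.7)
The plan is to transfer the residual subset from $Emb_{ng}(U, \R^4)$ over to $BEN(U, \R^4 \times (\R^4 \setminus \lbrace \bm{0} \rbrace))$ via the projection map $\Pi$, using a standard Baire-category pullback. First I would invoke the theorem just established: there is a residual subset $O \subset Emb_{ng}(U, \R^4)$ such that, for any $\br \in O$, the germ of the Blaschke exact normal congruence $F_{(\br, \bx)}$ at every point of $U \times I$ is Lagrangian stable.

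Next I would exploit the statement (made immediately above the corollary) that $\Pi: BEN(U, \R^4 \times (\R^4 \setminus \lbrace \bm{0} \rbrace)) \to Emb_{ng}(U, \R^4)$ is open and continuous. Writing $O = \bigcap_{n \in \NN} O_n$ as a countable intersection of open dense subsets of $Emb_{ng}(U, \R^4)$, each preimage $\Pi^{-1}(O_n)$ is open in $BEN$ by continuity of $\Pi$. Density is obtained from openness of $\Pi$: for any nonempty open $W \subset BEN$, the image $\Pi(W)$ is open in $Emb_{ng}(U, \R^4)$, hence $\Pi(W) \cap O_n \neq \emptyset$ by density of $O_n$, and consequently $W \cap \Pi^{-1}(O_n) \neq \emptyset$. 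Therefore $\cO := \Pi^{-1}(O) = \bigcap_{n} \Pi^{-1}(O_n)$ is a countable intersection of open dense subsets of $BEN$, i.e.\ residual.

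Finally, for any $(\br, \bx) \in \cO$ one has $\br \in O$, and by definition of $BEN$ the field $\bx$ is the Blaschke normal vector field of $\br$ (unique up to sign, cf.\ Remark~\ref{observ5.3}). Thus $F_{(\br, \bx)}$ coincides with the Blaschke exact normal congruence already controlled by the previous theorem, so its germ at every $(u_0, t_0) \in U \times I$ is either immersive or $\cA$-equivalent to one of the normal forms in Table~\ref{table1}.

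There is no real technical obstacle here: the proof is essentially a topological pullback argument, and the only non-trivial input beyond the preceding theorem is the openness and continuity of $\Pi$, which is explicitly asserted in the excerpt. This mirrors exactly the argument used in Corollary~\ref{corolario5.3} for ordinary exact normal congruences.
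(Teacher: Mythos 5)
Your argument is correct and is exactly the route the paper takes (and leaves mostly implicit): pull back the residual set $O \subset Emb_{ng}(U, \R^4)$ from the preceding theorem through the open continuous projection $\Pi$, with openness giving density of the preimages $\Pi^{-1}(O_n)$ and continuity giving their openness. Your write-up merely makes explicit the standard Baire-category step that the paper compresses into ``Using this, we obtain the following corollary.''
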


\subsection{Blaschke Normal Congruences}\ \  \\
Let
\begin{align*}
BN(U, \R^4 \times (\R^4 \setminus \lbrace \bm{0} \rbrace))=& \left\{  (\bm{x}, \bm{\xi}): \exists\, t \in C^{\infty}(U, \R),\, s.t.\, \bm{y}(u) = \bm{x}(u) + t(u)\bm{\xi}(u)\in Emb_{ng}(U, \R^4)\right. \\       &  \left. \text{and}\, \bm{\xi}\, \text{is the Blaschke normal vector field}\, \text{of}\, \bm{y} \right\}
\end{align*}
be the space of the Blaschke normal congruences. Alternatively we can look at this space as a subspace of $C^{\infty}(U, \R^4 \times \R \times (\R^4 \setminus \lbrace \bm{0} \rbrace))$
	\begin{align*}
BN(U, \R^4 \times \R \times (\R^4 \setminus \lbrace \bm{0} \rbrace)) =& \left\{ (\bm{x}(u), t(u), \bm{\xi}(u)):\bm{y}(u)=\bm{x}(u) + t(u)\bm{\xi}(u)\in Emb_{ng}(U, \R^4)\, \text{and} \right. \\     & \left. \bm{\xi}\, \text{is the Blaschke normal vector field }\, \text{of}\, \bm{y} \right\} 
\end{align*}
In both cases, with the Whitney $C^{\infty}$-topology.

The map
\begin{align*}
T_{rp}: C^{\infty}(U, \R^4 \times \R \times \left( \R^4 \setminus \lbrace  \textbf{0} \rbrace \right)) &\rightarrow C^{\infty}(U, \R^4 \times \left( \R^4 \setminus \lbrace  \textbf{0} \rbrace \right))\\
(\bm{x}(u), t(u), \bx(u)) &\mapsto (\bm{x}(u) + t(u)\bx(u), \bx(u)),
\end{align*}
is open and continuous (see proposition \ref{prop5.3}) in the Whitney $C^{\infty}$-topology. Notice that
\begin{align*}
BEN\left(U, \R^4 \times \left(\R^4 \setminus \lbrace \bm{0} \rbrace \right)\right) \subset C^{\infty}(U, \R^4 \times \R \times \left( \R^4 \setminus \lbrace  \textbf{0} \rbrace \right))
\end{align*}
 with the following identification 
\begin{align*}
BEN\left(U, \R^4 \times \left(\R^4 \setminus \lbrace \bm{0} \rbrace \right)\right) \ni (\bm{x}, \bx) \sim (\bm{x}, 0, \bx),\end{align*} where $\bm{x} \in Emb_{ng}(U, \R^4)$ and $\bx$ is its Blaschke normal vector field. Furthermore, we can look at the space of the Blaschke normal congruences as the space
\begin{align}\label{eqident}
\widetilde{BN}(U, \R^4 \times (\R^4 \setminus \lbrace \bm{0} \rbrace)) = T_{rp}\left( 	BN(U, \R^4 \times \R \times (\R^4 \setminus \lbrace \bm{0} \rbrace)) \right).
\end{align}
Thus, $T_{rp}(BEN\left(U, \R^4 \times \left(\R^4 \setminus \lbrace \bm{0} \rbrace \right)\right)) = \widetilde{BN}\left(U, \R^4 \times \left(\R^4 \setminus \lbrace \bm{0} \rbrace \right)\right)$. Hence, we obtain the following theorem.

\begin{teo}
	There is a residual subset $\cO' \subset \widetilde{BN}(U, \R^4 \times \left( \R^4 \setminus \lbrace \bm{0} \rbrace \right)) $, such that the germ of Blaschke normal congruence $F_{\left(\bm{x},\bm{e} \right)}$ at any point $(u_{0}, t_{0}) \in U \times I$ is a
	Lagrangian stable map germ for any $(\br, \bx) \in \cO'$, i.e., $ \forall\, (\br, \bx) \in \cO'$, $F_{(\br,\bx)}$ is an immersive germ, or $\cA$-equivalent to one of the normal forms in  table (\ref{table1}).
\end{teo}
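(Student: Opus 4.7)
The plan is to mimic exactly the argument used in Theorem (\ref{teo5.2}), transferring the Lagrangian stability result from the exact case to the general case via the open continuous map $T_{rp}$. The starting point is Corollary (\ref{cor5.1}), which already provides a residual subset $\cO \subset BEN(U, \R^4 \times (\R^4 \setminus \{\bm{0}\}))$ such that, for every $(\br, \bx) \in \cO$, the germ of the Blaschke exact normal congruence $F_{(\br, \bx)}$ at any point $(u_0, t_0) \in U \times I$ is Lagrangian stable.

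The candidate residual set is $\cO' := T_{rp}(\cO)$, interpreting $BEN$ inside the source of $T_{rp}$ via the identification $(\br, \bx) \sim (\br, 0, \bx) \in BN(U, \R^4 \times \R \times (\R^4 \setminus \{\bm{0}\}))$ indicated in the paragraph preceding the theorem. Under this identification one has $T_{rp}(\br, 0, \bx) = (\br, \bx)$, so $T_{rp}$ restricted to $BEN$ is essentially the inclusion of $BEN$ into $\widetilde{BN}$; combined with the equality $T_{rp}(BEN) = \widetilde{BN}$ noted before the statement, this shows that $T_{rp}$ sets up a bijective correspondence between $BEN$ and $\widetilde{BN}$. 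Since $T_{rp}$ is open and continuous in the Whitney $C^\infty$-topology by Proposition (\ref{prop5.3}), this correspondence preserves the Baire structure and therefore sends the residual set $\cO$ to a residual set $\cO' \subset \widetilde{BN}$.

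It remains to argue that Lagrangian stability passes through $T_{rp}$. If $(\by, \bx) \in \cO'$, then $(\by, \bx) = T_{rp}(\br, t, \bx)$ for some $(\br, t, \bx) \in BN$ with $\by = \br + t\bx$ and $\bx$ the Blaschke normal of $\by$; in particular $(\by, \bx) \in BEN$ and lies in $\cO$. The line congruence $F_{(\br, \bx)}(u,s) = \br(u) + s\bx(u)$ and $F_{(\by, \bx)}(u, s') = \by(u) + s'\bx(u) = \br(u) + (t(u)+s')\bx(u)$ differ only by the change of coordinates $s = t(u) + s'$ in the last variable, which is a diffeomorphism of $U \times I$ preserving the fibers of the projection to $U$. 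This is precisely an $\cA$-equivalence that commutes with the Lagrangian structure used in Remark (\ref{obsmapalag4}), so Lagrangian stability of $F_{(\br, \bx)}$ from Corollary (\ref{cor5.1}) yields Lagrangian stability of $F_{(\by, \bx)}$ and the $\cA$-equivalence to one of the normal forms of Table (\ref{table1}).

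The only delicate point is the bookkeeping at this last step: one must check that the reparameterization $s \mapsto s - t(u)$ intertwines the Lagrangian immersion $L_{(\br,\bx)}$ attached to $(\br, \bx)$ with the one attached to $(\by, \bx)$, so that the classification of Lagrangian singularities from Table (\ref{table1}) is indeed invariant. This is a direct computation using the explicit form of $L$ in Remark (\ref{obsmapalag4}) and the fact that $\bx$ is unchanged by $T_{rp}$, so no new transversality argument beyond the one already supplied by Corollary (\ref{cor5.1}) is needed.
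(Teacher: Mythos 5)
Your argument is correct and follows essentially the same route as the paper: both take $\cO' = T_{rp}(\cO)$ with $\cO$ the residual set of Corollary (\ref{cor5.1}), and both use the openness and continuity of $T_{rp}$ from Proposition (\ref{prop5.3}) together with the identity $T_{rp}\left(BEN\left(U, \R^4 \times \left(\R^4 \setminus \lbrace \bm{0} \rbrace \right)\right)\right) = \widetilde{BN}\left(U, \R^4 \times \left(\R^4 \setminus \lbrace \bm{0} \rbrace \right)\right)$ to transport residuality (the paper phrases this as $T_{rp}(\bigcap_i \cO_i) = \bigcap_i T_{rp}(\cO_i)$, you phrase it as the correspondence preserving the Baire structure). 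Your closing paragraph on the reparametrization $s \mapsto s - t(u)$ only makes explicit what the paper leaves implicit; just note that Corollary (\ref{cor5.1}) applies directly to the pair $(\by, \bx)$ with $\bx$ the Blaschke normal of $\by$, not to $(\br, \bx)$, so the transfer of stability runs in the direction opposite to the one you wrote, which is harmless since the $\cA$-equivalence is symmetric.
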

\begin{proof}
It is known that map $T_{rp}$ is open and continuous and $T_{rp}(BEN\left(U, \R^4 \times \left(\R^4 \setminus \lbrace \bm{0} \rbrace \right)\right)) = \widetilde{BN}\left(U, \R^4 \times \left(\R^4 \setminus \lbrace \bm{0} \rbrace \right)\right)$. If $\cU \subset BEN\left(U, \R^4 \times \left(\R^4 \setminus \lbrace \bm{0} \rbrace \right)\right)$ is open and dense, then its image by $T_{rp}$ is an open dense subset of $\widetilde{BN}\left(U, \R^4 \times \left(\R^4 \setminus \lbrace \bm{0} \rbrace \right)\right)$. Take $\cO = \bigcap\limits_{i \in \NN}\cO_{i}$ the residual subset of $BEN\left(U, \R^4 \times \left(\R^4 \setminus \lbrace \bm{0} \rbrace \right)\right)$ given in Corollary (\ref{cor5.1}). We can show that $T_{rp}(\cO) = \cO' = \bigcap\limits_{i \in \NN}\cO'_{i}$, where $T_{rp}(\cO_{i}) = \cO'_{i}$, therefore $\cO'$ is residual.
\end{proof}

\begin{ex}\normalfont
Taking into account \cite{german}(section 2) and \cite{global}(section 2.2.4) it is possible to parametrize a non-degenerate hypersurface $M$ around an elliptic point, by considering not only $\cal{R}$-equivalence but also affine transformations of $\R^4$, as a graph of a function $h: U \rightarrow \R$, such that
\begin{align}
h(u_{1},u_{2},u_{3}) &= 1/2\,({u_{1}}^{2}+{u_{2}}^{2}+{u_{3}}^{2})+{\it a_{111}}\,u_{1}u_{2}u_{3}+1/6\, \left( -{
	\it a_{120}}-{\it a_{102}} \right) {u_{1}}^{3}+1/2\,{\it a_{210}}\,{u_{1}}^{2}u_{2} \nonumber \\
&+1/2\,{
	\it a_{201}}\,{u_{1}}^{2}u_{3}+1/6\, \left( -{\it a_{210}}-{\it a_{012}} \right) {u_{2}}^{3
}+1/2\,{\it a_{120}}\,u_{1}{u_{2}}^{2}+1/2\,{\it a_{021}}\,{u_{2}}^{2}u_{3} \nonumber \\
&+1/6\, \left( -{
	\it a_{201}}-{\it a_{021}} \right) {u_{3}}^{3}+1/2\,{\it a_{102}}\,u_{1}{u_{3}}^{2}+1/2\,{
	\it a_{012}}\,u_{2}{u_{3}}^{2} + O(3).
 \end{align}

Here $O(3)$ means functions of order higher than 3. Since the group of affine transformations is different from the group of \textit{Euclidean motions} (translations and rotations) it follows that this is not necessarily a local parametrization of $M$ around an Euclidean umbilic point.
Using this parametrization, the Blaschke normal vector of $M$ at the origin is given by $(0,0,0,1)$. If we choose $a_{111}= a_{210} = a_{012} = a_{201} = 0$, $a_{120} = a_{102} = 1$ and $a_{021} = 2$, it follows that
\begin{align*}
h(u_{1},u_{2},u_{3}) = 1/2({u_{1}}^{2}+{u_{2}}^{2}+{u_{3}}^{2})-1/3{u_{1}}^{3}+1/2u_{1}{u_{2}}^{2}+1/2
u_{1}{u_{3}}^{2}+{u_{2}}^{2}u_{3}-1/3{u_{3}}^{3}.
\end{align*}
Using (\ref{vetorBlaschke}) we can compute the Blaschke normal vector field of $M$
\begin{align*}
\bx(u_{1},u_{2},u_{3}) = &(6/5u_{1}+ 18/5u_{1}^2 - 17/5(u_{2}^2 + u_{3})^2 + O(3), 2u_{2} - 6u_{1}u_{2}- 52/5u_{2}u_{3} + O(3), \\ &2u_{3}-6u_{1}u_{3}-26/5(u_{2}^2 - u_{3}^2) + O(3),
1+ 3/5u_{1}^2 + u_{2}^2 + u_{3}^2 + O(3)).
\end{align*}
Furthermore, the congruence map $F_{(\br, \bx)}(u_{1},u_{2}, u_{3},t) = \br(u_{1}, u_{2}, u_{3}) + t \bx(u_{1}, u_{2}, u_{3}) $ has a singular point at  $(0,0,0,-1/2)$ and its 2-jet at this point is given by
\begin{align*}
F_{(\br, \bx)}(u_{1},u_{2},u_{3},t) & = (2/5u_{1}-9/5{u_{1}}^{2}+17/10u_{2}^2+17/10u_{3}^2+6/5 \left( t+1/2 \right) u_{1}, 3u_{1}u_{2}+26/5u_{2}u_{3} \\
&+2 \left( t+1/2
\right) u_{2}, 3u_{1}u_{3}+ 13/5u_{2}^2-13/5u_{3}^2+2 \left( t+1/2 \right) u_{3}, t+1/5{u_{1}}^{2}).
\end{align*}
If we take $\lambda = s+\frac{1}{2} = t + \frac{1}{5}u_{1}^2$, then it is possible to verify that $F_{(\br, \bx)}(u, \lambda)$ is a versal deformation of $f_{0}(u) = (2/5u-9/5{u}^{2}+17/10u_{2}^2+17/10u_{3}^2, 3u_{1}u_{2} + 26/5 u_{2}u_{3}, 3u_{1}\,u_{3}+13/5u_{2}^2 - 13/5u_{3}^2)$, which is an elliptic umbilic  singularity.
\end{ex}

\begin{ex}\normalfont
Let us take a non-degenerate hypersurface given by the graph of
	\begin{align}
h(u_{1}, u_{2}, u_{3}) &= -1/2{u_{1}}^{2}-1/2{u_{2}}^{2}+1/2{u_{3}}^{2}+1/6{u_{1}}^{3}-1
/2{u_{1}}^{2}u_{2} \nonumber\\
&+1/2u_{1}\,{u_{3}}^{2}+1/3{u_{2}}^{3}+1/2u_{
	2}{u_{3}}^{2}.
	\end{align}
	Then, in a similar way to the last example, it is possible to verify that the map $F_{(\br, \bx)}$, where $\br(u_{1}, u_{2}, u_{3}) = (u_{1}, u_{2}, u_{3}, h(u_{1}, u_{2}, u_{3}))$ and $\bx$ is the Blaschke normal vector field of $\br$, has a hyperbolic umbilic singularity at $(0,0,0,5/4)$.
\end{ex}

\begin{ex}\normalfont
By taking a non-degenerate hypersurface given by the graph of
\begin{align}
h(u_{1}, u_{2}, u_{3}) = 1/2(-{u_{1}}^{2}-{u_{2}}^{2}+{u_{3}}^{2})+2u_{1}u_{2}u
_{3}+1/2u_{1}{u_{2}}^{2}+1/2u_{1}{u_{3}}^{2}+1/4{u_{2}}^{4}
\end{align}
it follows, in a similar way to the first example, that the map $F_{(\br, \bx)}$, associated to the Blaschke exact normal congruence, has a parabolic umbilic singularity at $(0,0,0,-5/6)$.
\end{ex}

\vspace{0.5cm}
	Igor Chagas Santos, Instituto de Ciências Matemáticas e de Computacão - Universidade de São Paulo, Av. Trabalhador sao-carlense, 400 - Centro, CEP: 13566-590 - São Carlos - SP, Brazil.\\
	e-mail: igor.chs34@usp.br\\ \\
	Maria Aparecida Soares Ruas, Departamento de Matemática, ICMC Universidade de São Paulo, Campus de São Carlos, Caixa Postal 668, CEP 13560-970, São Carlos-SP, Brazil.\\
	e-mail: maasruas@icmc.usp.br \\ \\
Débora Lopes da Silva,	Departamento de Matemática Universidade Federal do Sergipe Av. Marechal Rondon, s/n Jardim Rosa Elze - CEP 49100-000 São Cristóvao, SE, Brazil. \\
e-mail: deb@deboralopes.mat.br

\begin{thebibliography}{99}
	\bibitem{Bruce}\label{Bruce} J. W. Bruce. A classification of 1-parameter families of map germs $\mathbb{R}^3 , 0 \rightarrow \mathbb{R}^3 , 0$, with applications to condensation problems, J. London Math. Soc. 33 (1986), 375-384.
	\bibitem{cecil}\label{cecil} T. Cecil. Focal points and support functions in affine differential geometry. Geom. Dedicata
	50 (1994), 291-300.
	\bibitem{Barajas}\label{Barajas} M. Barajas, M. Craizer and R. Garcia. Lines of Affine Principal Curvatures of Surfaces in 3-Space. Results in Mathematics, 75:32, (2020), 1422-6383/20/010001-29.
	\bibitem{Craizer}\label{Craizer} M. Craizer and R. Garcia. Curvature lines of an equiaffine vector field transversal to a
surface in 3-space, \href{https://arxiv.org/abs/2008.04765v3}{arXiv:2008.04765v3}.
	\bibitem{manfredo}\label{manfredo} M. P. Do Carmo, Differential Geometry of Curves and Surfaces, Prentice Hall, Inc, 1976.
	\bibitem{Ghys}E. Ghys. Gaspar Monge - le mémoire sur les déblais et les remblais. Images
des Mathématiques (2012), (www.images.math.cnrs.fr).
	\bibitem{Giblin}\label{Giblin} P.J.Giblin and V.M.Zakalyukin, Singularities of centre symmetry sets, Proceedings of the London Mathematical Society, Volume 90 (2005), 132-166.
	\bibitem{gib}\label{gibson}C.G. Gibson, Singular Points of Smooth Mappings, Pitman research notes in Mathematics, Vol. 25 Pitman,Boston, 1979.
	\bibitem{gibsonand}C. G. Gibson, K. Wirthmüller, A. A. du Plessis and E. J. N. Looijenga. Topological stability
	of smooth mappings Lecture Notes in Mathematics, vol. 552 Springer, Berlin, 1970.
	\bibitem {gg}\label{GG}M. Golubitsky, and V. Guillemin.  Stable Mappings and Their Singularities. Graduate Texts in Mathematics 14. Springer-Verlag, Berlin, 1973.
	\bibitem{Livro}\label{livro} S. Izumiya, M. C. Romero Fuster, M. A. S. Ruas and F. Tari. Differential Geometry from a Singularity theory viewpoint. World Scientific, 2015.
	\bibitem {Izumiyaruled} \label{izumiyaruled}S. Izumiya and N. Takeuchi. Singularities of ruled surfaces in R3 . Math. Proc. Camb. Phil.
	Soc. 130 (2001), 1-11.
	\bibitem{Izumiya}\label{Izumiya} S. Izumiya, K. Saji, N. Takeuchi. Singularities of line congruences . Proc. of the Royal Society of Edinb., 133A (2003), 1341-1359.
	
	\bibitem{german}\label{german} K. Leichtweiß: Über eine geometrische Deutung des Affinnormalenvektors einseitig
	gekümmter Hyperflächen, Arch. Math. 53 (1989), 613-621.
	
	\bibitem{global}\label{global} A.-M Li, U. Simon, G. Zhao and Z. Hu. Global affine differential geometry of
	hypersurfaces, extended ed, 11, De Gruyter Expositions in Mathematics, De Gruyter, Berlin, 2015.
	
	\bibitem{Marar}\label{Marar} W. L. Marar and F. Tari. On the geometry of simple germs of co-rank 1 maps from $\mathbb{R}^3$ to $\mathbb{R}^3$, Math. Proc. Cambridge Philos. Soc. 119 (1996), 469-481.
	\bibitem{Mather} J. N. Mather. Stability of $C^{\infty}$ mappings. II: Infenitesimal stability implies stability. Annals of Math. 89 (2), (1969) 254-291.
	\bibitem{juanjo}\label{juanjo}D. Mond and J.J. Nuño Ballesteros. Singularities of Mappings: The Local Behaviour of Smooth and Complex Analytic Mappings.  Springer Nature Switzerland, 2020. 
	\bibitem{Monge} G. Monge, Mémoire sur la théorie des déblais et des remblais, Histoire de l'Académie royale des sciences avec les mémoires de mathématique et de physique tirés des registres de cette Académie (1781), 666-705.
	\bibitem{Montaldi}\label{Montaldi}J. A. Montaldi. On generic composites of maps. Bull. London Math. Soc. 23 (1991), pp. 81-85.
	\bibitem{geomAfim}\label{geomAfim} K. Nomizu and T. Sasaki. Affine differential geometry. Geometry of affine immersions. Cambridge Tracts in Mathematics, vol. 111 .Cambridge University Press, 1994.
	\bibitem{wall}\label{wall}C.T.C. Wall. Finite determinacy of smooth map-germs. Bull. London
Math. Soc. 13 (1981), 481-539.
	
	
	
	
	
	
	
	
	
	
	
	
	
	
\end{thebibliography}
\end{document}